\documentclass[12pt]{amsart}

\usepackage{amsmath}
\usepackage{amsthm}
\usepackage{amssymb}
\usepackage{amscd}
\usepackage{amsfonts}
\usepackage{amsbsy}
\usepackage{graphicx,epsf,epstopdf,color,pifont,subfigure,flafter,bm,amscd}

\usepackage{hyperref}
\hypersetup{colorlinks=true,citecolor=blue,%
        linkcolor=blue,urlcolor=blue,bookmarksnumbered=true,
        bookmarksopen=true,bookmarksopenlevel=1,breaklinks=true}

\newcommand{\R}{\ensuremath{\mathbb{R}}}
\newcommand{\C}{\ensuremath{\mathbb{C}}}

\def\epsilon{\varepsilon}
\newtheorem{theorem}{Theorem}[section]
\newtheorem{proposition}[theorem]{Proposition}
\newtheorem{lemma}[theorem]{Lemma}
\newtheorem{corollary}[theorem]{Corollary}

\numberwithin{equation}{section}

\numberwithin{figure}{section}

\begin{document}

\title[the cyclicity of the period
annulus]
{On the cyclicity of the period
annulus of quasi-homogeneous polynomial vector fields}

\author[Gavrilov, He and Xiao  ]
{Lubomir Gavrilov, Hongjin He,  ~~ Dongmei Xiao$^{\dag}$  }

\address{Dept of Math., University of Toulouse, France}\email{lubomir.gavrilov@math.univ-toulouse.fr}

\address{College of Mathematics and Statistics, Key Laboratory of Nonlinear Analysis and its Applications (Ministry of Education),
 Chongqing University, Chongqing, 401331,  China}\email{hehongjin000@126.com}

\address{School of Mathematical Sciences, CMA-Shanghai, Shanghai
	Jiao Tong University, Shanghai, 200240, China}
\email{xiaodm@sjtu.edu.cn}

\begin{abstract}
In this article, we study the number of limit cycles, bifurcating from the period annulus of any quasi-homogeneous polynomial vector fields with a center, under a one-parameter polynomial perturbation.  We first recharacterize quasi-homogeneous polynomial vector fields and its global center, then we establish an upper bound formula for the number of the isolated zeros of the $k$th order Melnikov function in terms of $k$, $max \{s_1,s_2\}$ and the degree $n$ of the perturbation by applying adapted Francoise's algorithm in conjunction with combinatorial techniques, where $(s_1,s_2)$ is the weight exponent of the quasi-homogeneous polynomial vector field. This extends relevant results presented in the literature [JDDE,21(2009)133-152] and [JDE, 276(2021)1-24].   As an application, we completely solve the limit cycle bifurcation problem of a perturbated quasi-homogeneous polynomial vector field.
\end{abstract}

\maketitle
\tableofcontents

\section{Introduction}
In this article, we investigate the general bifurcation theory of near quasi-homogeneous polynomial vector fields $X_\lambda$. More precisely, let $X_\lambda$, $\lambda \in (\R^N,0)$, be an analytic family of polynomial vector fields on $\R^2$, where $X_0$ is a quasi-homogeneous  vector field with a center and $(\R^N,0)$ is a small neighborhood of the origin in $\R^N$. An open period annulus  of $X_0$ is an open connected subset of the phase space $\R^2$, foliated by periodic orbits of $X_0$, homeomorphic  to
 the standard annulus $\mathbb{S}^1\times (0,1)$. The open annulus $U$ of the quasi-homogeneous vector field $X_0$ is therefore the punctured plane $\R^2\setminus \{(0,0)\}$.
 {\it The infinitesimal Hilbert 16th problem} on the period annulus $U$ is to find the uniform lower upper bound on the number of limit cycles of $X_\lambda$ for $\lambda$, which tend to $U$ when $\lambda\rightarrow 0$. This bound is called the {\it cyclicity} of $U$ with respect to the unfolding $X_\lambda$. More precisely, for an arbitrary compact set $K\subseteq U$, define the cyclicity $Cycl(K,X_\lambda)$ of $K$ as the maximal number of limit cycles of the vector field $X_\lambda$, which tend to the compact set $K$ as $\lambda\rightarrow 0$. The cyclicity $Cycl(U,X_\lambda)$ of
 an arbitrary invariant set (like the open period annulus $U$ of $X_0$)  is defined then by
$$
Cycl(U,X_\lambda)=\sup_{K\subseteq U}\{Cycl(K,X_\lambda): K\text{ is compact}\}.
$$

Determining the cyclicity of a period annulus for polynomial vector fields remains, in full generality, a largely open problem,
 closely related to the second part of the famous Hilbert 16th problem, see \cite{Rou1}. It is known that the cyclicity of an elementary homoclinic orbit (or an elementary polycycles in generic families) is finite, see \cite{gavr13, IY, Kal, Rou1, Rou2}. And the finiteness of the cyclicity holds true for open period annuli of the Hamiltonian and the generic Darbouxian vector fields, see \cite{gano08}. Iliev in \cite{Iliev} considered the cyclicity of the harmonic oscillator $H=\frac{1}{2}(x^2+y^2)$,  and Buica {\it et al.} in \cite{BGL} gave the estimation of the cyclicity  of Hamiltonian $H=(x^2+y^2)^m$. The cyclicity  of pertubated quasi-homogeneous Hamiltonian polynomial vector fields was established in \cite{FHX}. However, non-Hamiltonian quasi-homogeneous polynomial vector fields with an open period annuli are  usually not the generic Darbouxian vector fields.

 It follows from \cite[Theorem 1]{gavr08} and \cite[Proposition 2]{gano08} that for every open period annulus $U$ and unfolding $X_\lambda$ there always exists
an analytic arc $\epsilon\rightarrow \lambda(\epsilon)$, $\lambda(0)=0$,  such that
\begin{align}
\label{reduction}
Cycl(U,X_{\lambda(\epsilon)})=Cycl(U,X_\lambda), \varepsilon \in (\R,0) .
\end{align}
This observation transforms the cyclicity problem of a quasi-homogeneous polynomial vector field unfolding $X_\lambda$ with degree $n$ and  a finite number of parameters $\lambda$   to that  of a one-parameter family of vector fields $X_{\lambda(\epsilon)}$
\begin{equation}\label{eq2}
X_{\lambda(\epsilon)} :
\left\{
\begin{aligned}
&\dot{x}=P(x,y)+\sum_{i=1}^\infty\epsilon^iP_i(x,y),\\
&\dot{y}=Q(x,y)+\sum_{i=1}^\infty\epsilon^iQ_i(x,y),
\end{aligned}
\right.
\end{equation}
where
\begin{equation}\label{eq1}
X_0 :
\left\{
\begin{aligned}
&\dot{x}=P(x,y),\\
&\dot{y}=Q(x,y)
\end{aligned}
\right.
\end{equation}
is any quasi-homogeneous polynomial vector field with an open annuli $U$,  and $P_i,Q_i$ are polynomials of $(x,y)$ with degree not more than $n$.
Equivalently, it will be often convenient to consider the related foliation $\mathcal{F}(\omega_\epsilon)$ associated to the one-form
\begin{equation}\label{eq22}
\omega_\epsilon:=\sum_{i=0}^\infty \epsilon^i\omega_i, \ \ 0<|\epsilon|\ll 1
\end{equation}
where
$$
\omega_0=Q(x,y)dx-P(x,y)dy,\quad \omega_i=Q_i(x,y)dx-P_i(x,y)dy,
$$
and $\omega_0$ is a quasi-homogeneous polynomial one-form.

By \cite{LLNZ}, we know that system $ Q(x,y)dx-P(x,y)dy = 0$ has an analytic first integral $H$ on $U$, i.e. there exists an analytic integrating factor $\mu(x,y)$ on $U$ such that $dH=\mu\omega_0$. Generally, $H$ is  quasi-homogeneous analytic function but not a polynomial.   Thus, system \eqref{eq2} can be transformed to a perturbated Hamiltonian non-polynomial vector field.  The foliation $\mathcal{F}(\omega_\epsilon)$ on every compact set $K\subseteq U$ has the same leaves as the foliation associated to the perturbated Hamiltonian equation
\begin{equation}\label{eq221}
\mu\omega_\epsilon=dH+\sum_{i=1}^\infty \epsilon^i\mu\omega_i=0.
\end{equation}

To study the bifurcation of limit cycles from the period annulus $U$
for system \eqref{eq221} as $0<|\epsilon|\ll 1$, a fundamental approach is to analyze the \textit{Poincar\'e return map} defined on a transverse section $L$ to the period annulus (cf. \cite{F,FP,G, FGX}).
Parameterizing the transverse section by the restriction $h=H(x,y)|_L$ on it, we get
$$
U=\bigcup_{h_1<h<h_2} \gamma_h,\ \ h_1, h_2\in \mathbb{R},
$$
where  $\gamma_h$ is an oval of the level set $\{(x,y)|\ H(x,y)=h, h_1<h<h_2 \}$. The Poincar\'e map on $L$ can be expressed as
\begin{equation}
P(h,\epsilon)=h+\epsilon^kM_k(h)+\epsilon^{k+1}M_{k+1}(h)+\dots,
\end{equation}
where $M_k(h)\not\equiv 0$ as $h_1<h<h_2$. Here $M_k(h)$ is called {\it $k$th order Melnikov function} (cf. \cite{FGX}).
A limit cycle corresponds to an isolated fixed point of the Poincar\'e map $P$.
If a limit cycle tends to an oval $\gamma_h$ as $\varepsilon \to 0$, then $M_k(h)=0$. Therefore the number of zeros of $M_k$ (counted with multiplicity) provides an upper bound to the cyclicity $Cycl(U,X_\lambda(\varepsilon))$, which is the content of well known Poincar\'e-Pontryagin-Melnikov theorem. Hence, the key point to solve the infinitesimal Hilbert 16th problem for near quasi-homogeneous polynomial vector fields $X_\lambda$ on the period annulus $U$ is to study the number of zeros of Melnikov functions $M_k$ for all $k$. We note here that from the identity (\ref{reduction}) and the general form of the Melnikov functions $M_k$, see Theorem \ref{mainthm}, it follows that $Cycl(U,X_{\lambda(\varepsilon)})$ is finite and hence
$Cycl(U,X_\lambda)$ is finite too.  A significant contribution on the bifurcation of limit cycles from $U$ of  quasi-homogeneous polynomial vector fields was made by Li et al. in \cite{Li}, who provided a refined upper bound for the number of zeros of the first-order Melnikov function $M_1(h)$.

The higher order  Melnikov functions were intensively studied in the last three decades in many papers. We mention a non-exhaustive list of references which inspired the present paper \cite{F,FP,FGX,G,Iliev2, LianLiu, JMP, JZ, Rou1, ZhangLi} and references therein. The fundamental method to calculate the Melnikov functions of perturbated Hamiltonian system \eqref{eq221} is the {\it Francoise's algorithm} (cf. \cite{F}).
Let $\Omega$ be a one-form such that $\int_{\gamma_h}\Omega=0$. Then it is possible to find $a, R$ such that
\begin{equation}\label{cohomo}
\Omega=-adH+dR .
\end{equation}
It follows that on every level set $\{ (x,y): H(x,y)= h\}$ it holds
$$
R = \int \Omega,\ \  a= - \int \frac{d \Omega}{dH},
$$
where $ \frac{d\Omega}{dH}$ is the Gelfand-Leray form of $d\Omega$.
Finding explicit expressions for $a, R$ in $\R^2$  is, however,  a challenging  problem. When $X_0$ is quasi-homogeneous Hamiltonian polynomial vector fields, authors in \cite{FHX} observed the Gelfand-Leray form
$
\frac{d\Omega}{dH}=\frac{\Delta\Omega(s_2ydx-s_1xdy)}{mH},
$
where the Hamiltonian $H(x,y)$ is $(s_1,s_2)$-quasi-homogeneous functions of weight degree $m$ (see Section 2) and $d\Omega=\Delta\Omega dx\wedge dy$.  They then
 obtained the upper bounded with explicit expression on the number of zeros of the first non-zero Melnikov function $M_k$ of polynomial unfolding $X_\lambda$ with degree $n$ by applying the Francoise's algorithm.

 One of the primary motivations for this article is to study the bifurcation of limit cycles from the period annulus $U$ for  quasi-homogeneous polynomial vector fields unfolding $X_{\lambda}$. The unfolding $X_{\lambda}$ not only extends the first-order perturbation  in \cite{Li} but also  generalizes the quasi-homogeneous polynomial Hamiltonian  vector field and its  perturbations  in  \cite{FHX}.
       We adopt a  different approach from that in \cite{FHX} to realize the Francoise's algorithm of any order Melnikov functions. Our main objective is to address the calculation of the first non-zero Melnikov functions $M_k$ and to estimate the number of their zeros by
       completing a series of combinatorial counting tasks.

 The paper is organized as follows.

In section \ref{Quasi-homogeneous}, we first discuss the general facts on homogeneous and quasi-homogeneous planar differential systems, which are by no means part of mathematical folklore and  no appropriate reference appears to be available, to our knowledge. Then we re-characterize quasi-homogeneous polynomial vector fields with a center.
 In section \ref{highOrderMelnikov}, we recall the Francoise's algorithm,  and we further adapted the algorithm in generalized polar coordinates such that it can be  applied to a general quasi-homogeneous differential system, see Theorem \ref{thm-Fran3}.
 In Sections 2 and 3 of these preliminaries will be our main technical tool in the rest of the paper.
 The main result of this paper is finally formulated in Theorem \ref{mainthm},  in section \ref{AHproblem}. We establish an upper bound for the number of the zeros of the $k$th order Melnikov function in terms of $k$, $max \{s_1,s_2\}$ and the degree $n$ of the perturbation, which is equal to the cardinal of a set denoted by $\mathcal{S}_{n+1,k}^{(s_1,s_2,m)}$ minus one. This leads that the cyclicity $Cycl(U,X_\lambda)$ is finite. 
In the final section, we provide an example to explain how our algorithm works on calculation of Melnikov functions in a perturbated quasi-homogeneous polynomial differential system.

\section{ Quasi-homogeneous and homogeneous polynomial planar vector fields}
\label{Quasi-homogeneous}
In the section we present some general results on homogeneous and quasi-homogeneous planar differential systems, and re-characterize quasi-homogeneous polynomial vector fields with a center.
The classical solution of the center-focus problem in the homogeneous case goes back at least to  Forster \cite{fors37} in 1937, see Theorem \ref{ana}. A homogeneous polynomial planar differential system has a first integral of Darboux type  $\prod_k l_i^{\alpha_i}exp (Q)$, where $l_i$ are linear forms, $\alpha_i$ are real numbers, and $Q$ is a rational function. Based on this, we propose a more geometric solution of the center-focus problem in Theorem \ref{geom}. As every quasi-homogeneous polynomial system is a pull back of a homogeneous one, Theorem \ref{quasi}, then these results are easily re-formulated in the quasi-homogeneous case, see Theorem \ref{geom*} and Theorem \ref{thm-1}. We give finally a direct proof of Theorem \ref{thm-1} and two useful corrby making use of the Liapounov generalized trigonometric functions $Sn$ and $Cn$.

A planar polynomial differential system $\dot{x}=P(x,y), \dot{y}=Q(x,y)$
defines a singular planar foliation
\begin{equation}
\label{fol}
Q(x,y) dx - P(x,y) dy = 0.
\end{equation}
 A diffeomorphism of $\R^2$ is said to be  a {\it symmetry} of (\ref{fol}) provided that the foliation (\ref{fol}) is invariant under the action of the diffeomorphism. For fixed weights $s_1,s_2>0$, we consider
the one-parameter group of linear maps of $\R^2$, $\lambda \mapsto \Phi_\lambda$, $\lambda\in \mathbb{R}_+^*$, here $\mathbb{R}_+^*=(0,+\infty)$
\begin{equation}
\label{ql}
 \Phi_\lambda(x,y) = (\lambda^{s_1}x, \lambda^{s_2}y),\quad \lambda >0 .
\end{equation}
The foliation \eqref{fol} is said to be \emph{$(s_1,s_2)$-quasi-homogeneous}, if it is invariant under the action of the one-parameter group of diffeomorphisms (\ref{ql}).  When $s_1=s_2=1$, the quasi-homogeneous foliation \eqref{fol} is called \emph{homogeneous}.
 $P$ and $Q$ are called {\it $(s_1,s_2)$-quasi-homogeneous functions of weight degree $d_1$ and $d_2$}, respectively, if
$$
P (\lambda^{s_1}x, \lambda^{s_2}y) = \lambda^{d_1} P(x,y),\quad Q (\lambda^{s_1}x, \lambda^{s_2}y) = \lambda^{d_2} Q(x,y)
$$
for suitable real $d_1,d_2,m$ which satisfy $m=d_2+s_1=d_1+s_2$. It is not difficult to see that
\begin{equation}\label{plo}
\Phi_\lambda^*(\omega_0)=\lambda^m\omega_0,
\end{equation}
and naturally the one-form $\omega_0$ is called {\it a $(s_1,s_2)$-quasi-homogeneous one-form of weight degree $m$} if equality \eqref{plo} holds, in other words, system \eqref{eq1} is a $(s_1,s_2)$-quasi-homogeneous polynomial system of weight degree $m+1-s_1-s_2$ (cf. \cite{Li}).

The weights $s_1,s_2$ satisfy moreover a linear relation with integer coefficients and therefore without loss of generality\emph{ we shall assume from now on that $s_1,s_2$ are mutually prime positive integers and  $s_1\geq s_2$}.

As the foliation \eqref{fol} allows a one-parameter group of symmetries, then it can be integrated, see \cite{arn80}. The result is that\emph{ every quasi-homogeneous planar vector field has a first integral of Darboux type}. To make this statement more precise, we consider in  detail the simplest non-trivial case of the quadratic homogeneous vector field.
\subsection{Homogeneous vector fields}
Consider first a real quadratic homogeneous vector field
\begin{equation}
\label{quadratic}
\left\{
\begin{aligned}
\dot{x}&= a_0 x^2 + a_1 xy + a_2 y^2 \\
\dot{y}&= b_0 x^2 + b_1 xy + b_2 y^2.
\end{aligned}
\right.
\end{equation}
where the polynomials
$$P(x,y)= a_0 x^2 + a_1 xy + a_2 y^2, Q(x,y)= b_0 x^2 + b_1 xy + b_2 y^2$$ are mutually prime in the ring $\R[x,y]$. Denote
$$
-x Q(x,y)+ y P(x,y)= \Pi_{k=1}^3 (y_k x- x_k y)
$$
where $(x_k,y_k)$ are suitable non-zero complex vectors.
We have then the following
\begin{theorem}
\label{prop1}
The quadratic vector field (\ref{quadratic}) has a first integral as follows
 \begin{description}
\item[(a)] If the vectors $(x_1,y_1), (x_2,y_2)$ and $(x_3,y_3)$ are two by two independent, then
 (\ref{quadratic})  has a first integral of Darboux type
\begin{equation*}
\label{generic}
H(x,y) = \Pi_{k=1}^3 (y_k x- x_k y)^{\alpha_k}
\end{equation*}
where
$\alpha_1 \neq 0, \alpha_2 \neq 0, \alpha_3 \neq 0, \alpha_1+\alpha_2 + \alpha_3 \neq 0 .$
\item[(b)] If two of the vectors $(x_k,y_k)$ are co-linear, but the third one is linearly independent
 then, up to a linear change of the variables,  (\ref{quadratic})  has a first integral of Darboux type
$$
H(x,y) = (y_1 x- x_1 y)^{\alpha_1} (y_2 x- x_2 y)^{\alpha_2} e^\frac{p_2(x,y)}{(y_2 x- x_2 y)}
$$
 where
$ \alpha_1 \neq 0,   \alpha_1 + \alpha_2 \neq 0
$
and $p_2$ is a homogeneous polynomial of degree one, not divisible by $y_2 x- x_2 y$.
\item[(c)] If $(x_1,y_1), (x_2,y_2) , (x_3,y_3) $ are aligned, then, up to a linear change of the variables,  (\ref{quadratic})  has a first integral of Darboux type
$$
H(x,y) =  (y_1 x- x_1 y)^{\alpha_1}e^\frac{p_1(x,y)}{(y_1 x- x_1 y)^2}
$$
where $p_1$ is a homogeneous polynomial of degree two, not divisible by $y_1 x- x_1 y$.
\end{description}

\end{theorem}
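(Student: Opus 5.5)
The plan is to compute an explicit integrating factor for the one-form $\omega_0 = Q\,dx - P\,dy$ and then integrate $\omega_0/F$ by partial fractions, where
$$
F(x,y) := -xQ(x,y) + yP(x,y) = \prod_{k=1}^3 (y_k x - x_k y)
$$
is the cubic form of the statement. The starting point is the classical fact that for a homogeneous foliation the Euler field $E = x\partial_x + y\partial_y$ generates the symmetry group (\ref{ql}) with $s_1=s_2=1$. Since $\omega_0(E) = xQ - yP = -F$, the function $1/F$ is an integrating factor wherever $F\neq 0$. I would verify this directly: writing $\omega_0/F$ in the variable $t = y/x$ (or $x/y$ on the other chart), one gets
$$
\frac{\omega_0}{F} = -\frac{dx}{x} + \frac{Q(1,t)\,dt}{F(1,t)}\cdot(\text{sign/normalisation}),
$$
which is visibly closed. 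Here $F(1,t)$ is a cubic in $t$, or of lower degree if some root lies at infinity; this is handled by a preliminary linear change of variables, as permitted in (b) and (c). Coprimality of $P$ and $Q$ guarantees $F\not\equiv 0$. It also guarantees that no linear factor $y_kx - x_ky$ of $F$ divides both $P$ and $Q$, which I will need in order to show that the exponents do not vanish.

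Next I decompose $Q(1,t)/F(1,t)$ into partial fractions according to the root structure of $F(1,t)$, i.e. according to the collinearity pattern of the vectors $(x_k,y_k)$. In case (a) there are three simple roots, so the decomposition is $\sum_k c_k/(t - t_k)$. Integrating, together with the $-dx/x$ term, gives a logarithm of a product $\prod (y_kx - x_ky)^{\alpha_k}$ multiplied by a power of $x$. Since $F$ is a cubic, the term $x^{-1}$ combines with the factors so that the total expression is homogeneous; in fact one checks that $\log H$ is given by $\sum \alpha_k \log(y_kx - x_ky)$ with $x$-power $\sum\alpha_k - 1\cdot(\ldots)$ absorbed. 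In case (b) there is a double root, and the term $c/(t-t_2)^2$ integrates to produce $\exp\big(p_2/(y_2x - x_2y)\big)$ with $p_2$ linear. In case (c) there is a triple root, giving $\exp\big(p_1/(y_1x - x_1y)^2\big)$ with $p_1$ quadratic. Complex conjugate root pairs give real first integrals after combining conjugate factors; I would remark that the formulas are understood in this sense, since the statement allows complex $(x_k,y_k)$.

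The main obstacle is proving the nondegeneracy conditions on the exponents. The residue is $\alpha_k = Q(1,t_k)/F'(1,t_k)$ up to normalisation, and $\alpha_k \neq 0$ holds exactly because $Q$ does not vanish on the line $y_kx = x_ky$. Indeed, if it did, then $F = yP - xQ$ would force $P$ to vanish there as well, contradicting coprimality. For the sum condition $\sum \alpha_k \ne 0$, I would compare the exact form with $d\log$ of the product. The key is the identity $\sum_k \operatorname{Res}_{t_k} + \operatorname{Res}_\infty = 0$ for the rational form $Q(1,t)\,dt/F(1,t)$, combined with the fact that the $x$-exponent equals $-1$ from the Euler term. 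This should show that $\sum\alpha_k$ equals (up to sign) the residue at infinity, i.e. the ratio of leading coefficients $b_2$ over the $t^3$ coefficient of $F(1,t)$. Alternatively, one can use that $H$ is homogeneous of degree $\sum\alpha_k$ and that degree $0$ would make $\omega_0$ proportional to a pullback from $\mathbb{P}^1$, forcing $\omega_0(E) = F \equiv 0$, a contradiction. I would adopt the second, cleaner argument for all three cases, and it also yields $p_k$ not divisible by the corresponding linear form, since otherwise the exponential term would reduce to a lower-order pole.
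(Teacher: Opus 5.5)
Your overall route is the paper's: the integrating factor $1/F$ is the paper's separation of variables (in the chart $w=x/y$ its identity $dy/y=Q(w,1)\,dw/(P(w,1)-wQ(w,1))$ is exactly $\omega_0/F=0$), followed by partial fractions and residues, with coprimality making the residues nonzero. The gap is the residue sum, which is the one computation the paper actually performs: your proposal gets it wrong and then bypasses it.

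Your chart formula mixes the two charts. For $t=y/x$ one has
\[
\frac{\omega_0}{F}=-\frac{dx}{x}-\frac{P(1,t)\,dt}{F(1,t)},\qquad F(1,t)=tP(1,t)-Q(1,t),
\]
so the numerator is $P(1,t)$. $Q$ is the correct numerator only in the chart $w=x/y$, paired with $-dy/y$. This is not a sign or normalisation issue:
\begin{itemize}
\item With $Q(1,t)$ your residue at infinity is $-b_2/a_2$, which vanishes whenever $b_2=0$. For example, take $P=x^2+y^2$, $Q=x^2+xy$, so $F(1,t)=t^3-1$; your residues then sum to $0$. So that route cannot give $\sum_k\alpha_k\neq0$.
\item Your non-vanishing argument fails at the root $t_k=0$. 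Indeed $y\mid F$ exactly when $b_0=Q(1,0)=0$, and then $F=yP-xQ$ vanishes on $\{y=0\}$ whatever $P$ does.
\end{itemize}

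More seriously, the step you leave as ``one checks \dots\ absorbed'' is where the content lies. Write $c_k$ for the residues of $P(1,t)\,dt/F(1,t)$ and $\ell_k=y-t_kx$ (proportional to $y_kx-x_ky$). Integration gives $H=x^{\beta}\prod_k\ell_k^{\alpha_k}$ with $\alpha_k=-c_k$ and $\beta=-1+\sum_k c_k$. This has the claimed form only if $\beta=0$. Your Euler/homogeneity argument cannot supply this: applying $E$ to $\log H$ gives only $\beta+\sum_k\alpha_k=\omega_0(E)/F=-1$, and ``$H$ is homogeneous of degree $\sum_k\alpha_k$'' already presupposes $\beta=0$.

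The repair is the paper's ``simple residue calculus'' with the right numerator. After your linear change $x\nmid F$, so $F(1,t)$ and $tP(1,t)$ have the same leading coefficient $a_2\neq0$. Hence $P(1,t)/F(1,t)\sim 1/t$ at infinity and $\sum_k c_k=1$. This gives $\beta=0$ and $\sum_k\alpha_k=-1$ at once (the paper's normalisation, obtained there in the chart $w=x/y$), and makes the degree-zero argument redundant. Alternatively, $\beta=0$ because $\omega_0/F$ is holomorphic along $\{x=0\}$ when $x\nmid F$, so $d\log H$ has no residue there; only then does your Euler argument settle the sum conditions in (a) and (b).

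With numerator $P(1,t)$ the non-vanishing is clean: $P(1,t_k)=0$ together with $F(1,t_k)=0$ forces $Q(1,t_k)=t_kP(1,t_k)=0$, contradicting coprimality.

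Finally, the non-divisibility of $p_2$ and $p_1$ does not come from homogeneity but from this same fact. The coefficient of the top-order pole $(t-t_k)^{-2}$, resp.\ $(t-t_k)^{-3}$, is $P(1,t_k)$ divided by a nonzero constant. And $p_k$ is divisible by $\ell_k$ exactly when that coefficient vanishes.
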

\proof
Introduce the new variable   $ w = \frac xy$. We can write  (\ref{fol}) in the form
$$
\frac{dx}{dy} = w + y  \frac{d w}{dy} =
 \frac{P(x,y) }{Q(x,y)}, \;
$$
or equivalently
\begin{align*}
\frac{dy}{y} = & (\frac{P(x,y)}{Q(x,y) } - \frac xy) ^{-1} dw \\
= & \frac{y Q(x,y)}{ -x Q(x,y) + y P(x,y)} dw \\
= &  \frac{ Q(w,1)}{ -w Q(w,1) +  P(w,1)} dw .
\end{align*}
In the case $({\bf a})$ the polynomial $  -w Q(w,1) +  P(w,1)$ has three distinct roots given by $\frac{x_k}{y_k}$, $k=1,2,3$ (assuming that $y_k\neq 0$).
Therefore
$$
\frac{ Q(w,1)}{ -w Q(w,1) +  P(w,1)} = \frac{  \alpha_1}{w- \frac{x_1}{y_1}} + \frac{\alpha_2}{w- \frac{x_2}{y_2}} +  \frac{\alpha_3}{w- \frac{x_3}{y_3}}.
$$
A simple residue calculus implies $ \alpha_1+\alpha_2+\alpha_3 = -1$ and
$$
\alpha_i=\frac{Q(\frac{x_i}{y_i},1)}{\Pi_{j\neq i}(\frac{x_i}{y_i}-\frac{x_j}{y_j})}\neq 0.
$$
Hence, the above foliation has a first integral
\begin{align*}
\Pi_{k=1}^3 (w- \frac{x_k}{y_k})^{\alpha_k} y^{\alpha_1+\alpha_2+\alpha_3} = \Pi_{k=1}^3 (x- \frac{x_k}{y_k} y)^{\alpha_k},
\end{align*}
which implies the result in the "generic" case ({\bf a}). The remaining cases are studied in a similar way.
 \endproof
 More generally, let
 \begin{equation}
 \label{ic}
 P\frac{\partial}{\partial x} + Q\frac{\partial}{\partial y}
 \end{equation}
 be a homogeneous vector field of arbitrary degree $d\geq 2$.
 By degree of (\ref{ic}) here we mean the number of tangency points which has the field with a generic straight line in $\C^2$. This geometric definition implies that if  (\ref{ic}) is of degree $d$, then $-x Q(x,y) + y P(x,y)$ is of degree \emph{exactly} $d+1$.

The proof of Theorem \ref{prop1} requires some remarks, which are valid for any homogeneous vector field of degree $d \geq 2$ and which we list below.
Namely :
 \begin{itemize}
\item
The zero locus of the homogeneous polynomial of degree $d+1$
$$-x Q(x,y) + y P(x,y) = \det \left(\begin{array}{cc}
P & Q \\
x & y\end{array}\right) = \Pi_{k=1}^{d+1} (y_k x- x_k y)
$$ is a reducible over $\C$ curve
$$
\Gamma= \{ (x,y)\in \C^2 : -x Q(x,y) + y P(x,y)  = 0 \}
$$
along which the vector field
$P\frac{\partial}{\partial x} + Q\frac{\partial}{\partial y}$ and the radial vector field $x\frac{\partial}{\partial x} + y\frac{\partial}{\partial y}$ are mutually tangent.
The radial vector field is, however, tangent to $\Gamma$. Therefore, $\Gamma$ is tangent to the homogenous vector field $P\frac{\partial}{\partial x} + Q\frac{\partial}{\partial y}$,  (\ref{ic}). Of course, this is not surprising, because the polynomial defining $\Gamma$ is an inverse integrating factor of (\ref{ic}).
\item
The real foliation (\ref{fol}) induces a complex foliation on $\C^2$, which is completed to a singular foliation on the projective plane $\mathbb P^2$ with homogeneous coordinates $[x:y:z]$. The infinite line $z=0$ is a leaf of the foliation, and the degree $d+1$ invariant line $\Gamma$
intersects the infinite line $z=0$ into $d+1$ singular points, counted with multiplicities, with projective coordinates
$[x_k:y_k:0]$. The case ({\bf a}) in Theorem \ref{prop1} corresponds to $d+1$ distinct singular points along $\{z=0\}$ and the remaining cases correspond to multiple singular points.
 \item
If the foliation (\ref{fol}) has multiple singular points along $\{z=0\}$, then it is a \emph{limit}, under of an appropriate deformation in the parameter space of foliations with $d+1$ distinct simple singular points along $\{z=0\}$.
\item
If the foliation (\ref{fol}) has $d+1$ distinct simple singular points $[x_k:y_k:0]$ along $\{z=0\}$, then as in the case $d=2$, it has a first integral
\begin{align}
\label{gen}
H(x,y) = \Pi_{k=1}^{d+1} (y_k x- x_k y)^{\alpha_k},
\end{align}
where $\alpha_k$ are appropriate non-zero complex numbers.
\item
In the remaining \emph{limiting} cases,  when some of the singular points at infinity are multiple, the invariant curve $\Gamma$ takes the form
\begin{align}
\label{nongen}
 \Gamma = \{(x,y)\in \C^2:  \Pi_{k} (y_k x- x_k y)^{n_k+1}=0 \},
 \end{align}
 where $\sum_k (n_k+1) = d+1 $. The first integral $H$ of the foliation (\ref{fol}) takes the form
$$
H(x,y) = \Pi_{k} (y_k x- x_k y)^{\alpha_k} e^{\frac{p_{k}(x,y)}{(y_k x- x_k y)^{n_k}}},
$$
 where $\alpha_k$ are non-zero complex numbers, $n_k+1$ is the multiplicity of the invariant line
 $\{y_k x- x_k y = 0 \}$, and  $p_{k}(x,y)$ is an arbitrary homogeneous polynomial of degree $n_k$ not divisible by $y_k x- x_k y$.
\end{itemize}
 The above considerations imply a solution of the classical center-focus problem for homogenous vector fields as follows. Let (\ref{ic}) be a homogeneous vector field of degree $d\geq 2$. Assuming that (\ref{ic}) is real and has a center, implies that it has no real invariant lines and
 that if $\{y_k x- x_k y = 0\}$ is a complex invariant line with $x_k/y_k \not \in \R$, then  $\{\overline{ y_k} x- \overline{ x_k} y = 0\}$ is an invariant line too. The curve $\Gamma$ is therefore of even degree and the first integral in full generality takes the form
\begin{align}
\label{firstintegral}
\Pi_{k=1}^{(d+1)/2} (y_k x- x_k y)^{\alpha_k}e^{\frac{p_{k}(x,y)}{(y_k x- x_k y)^{n_k}}}
(\overline{y_k} x- \overline{x_k} y)^{\overline{\alpha_k}} e^{\frac{\overline{p_{k}}(x,y)}{(\overline{y_k} x- \overline{x_k} y)^{n_k}}} .
\end{align}
The vector field has a center at the origin if and only if $H$ is a single valued analytic function on the punctured real plane $\R^2\setminus \{(0,0)\}$.
In the special case $\alpha_k\in \R$,  this is obviously true. Assume now that some $\alpha_k$ is not real. More specifically, consider the function
$$
\R^2\setminus \{(0,0)\} \to \R : (x,y) \mapsto (x+iy)^{\alpha i} (x-iy)^{-\alpha i}, \alpha \in \R .
$$
 If we denote $\varphi$ the argument of the complex variable $z=x+iy$, then
 $$
 (x+iy)^{\alpha i} (x-iy)^{-\alpha i} = e^{-2\alpha \varphi},
 $$
 which shows that when $(x,y)$ makes one turn around $(0,0)$ on $\R^2$ in a positive direction, then  $(x+iy)^{\alpha i} (x-iy)^{-\alpha i}$
 is multiplied by $e^{-4\alpha \pi}$. Similarly,  when $(x,y)$ makes one turn around $(0,0)$ on $\R^2$ in a positive direction, the argument of $y_k x- x_k y$
 \begin{itemize}
\item increases by $2\pi$, if the imaginary part of $y_k/x_k$ is positive
\item decreases by $2\pi$, if the imaginary part of $y_k/x_k$ is negative.
\end{itemize}
 Note that if the argument of $y_k x- x_k y$ increases, then the argument of $\overline{y_k} x- \overline{x_k} y$ decreases and vice versa.
     We obtain the following
 \begin{proposition}
 The  real function
 $$
 H(x,y) = \Pi_k (y_k x- x_k y)^{\alpha_k} (\overline{y_k} x- \overline{x_k} y)^{\overline{\alpha_k}},\  Im (y_k/x_k) >0
 $$
 is a single valued analytic function on $\R^2\setminus \{(0,0)\}$
if and only if $Im(\sum_k  \alpha_k ) = 0$.
 \end{proposition}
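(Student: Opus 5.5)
The plan is to compute the monodromy of $H$ along a single positive loop around the origin in $\R^2\setminus\{(0,0)\}$ and show that it is the factor $e^{-4\pi\, Im(\sum_k\alpha_k)}$. Write each factor as
$$
(y_kx-x_ky)^{\alpha_k}=\exp\bigl(\alpha_k\log(y_kx-x_ky)\bigr),
\qquad
(\overline{y_k}x-\overline{x_k}y)^{\overline{\alpha_k}}=\exp\bigl(\overline{\alpha_k}\log(\overline{y_k}x-\overline{x_k}y)\bigr).
$$
For real $(x,y)$ the second linear form is the complex conjugate of the first, so locally we may choose conjugate branches of the two logarithms. With this choice each product $F_k=(y_kx-x_ky)^{\alpha_k}(\overline{y_k}x-\overline{x_k}y)^{\overline{\alpha_k}}=\exp\bigl(2\,Re(\alpha_k\log L_k)\bigr)$, where $L_k=y_kx-x_ky$, is real and positive. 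It is also locally analytic, since $L_k\neq0$ on $\R^2\setminus\{0\}$: the line $L_k=0$ has no real points because $y_k/x_k\notin\R$. Hence $H$ is locally a real analytic positive function, and the only issue is whether it is single valued, i.e.\ whether analytic continuation around the generator of $\pi_1(\R^2\setminus\{0\})\cong\mathbb Z$ returns to the same branch.

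Next I compute the change of argument of $L_k$ along the unit circle $(x,y)=(\cos\varphi,\sin\varphi)$. We have $L_k=y_k\cos\varphi-x_k\sin\varphi$, and under the convention $Im(y_k/x_k)>0$, the discussion preceding the proposition gives that $\arg L_k$ increases by exactly $2\pi$ (winding number $+1$). I would check this directly. Write $L_k=-x_k(\sin\varphi-c\cos\varphi)$ with $c=y_k/x_k$. Since $\sin\varphi-c\cos\varphi=\tfrac{1}{2i}\bigl((1+ic)e^{i\varphi}-(1-ic)e^{-i\varphi}\bigr)$, the winding number is $+1$ exactly when $|1+ic|>|1-ic|$, which is equivalent to $Im(c)<0$. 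Depending on the orientation conventions the sign may therefore come out as $-1$, but it is the same sign $\sigma=\pm1$ for every $k$, and that is all that matters. The conjugate form then winds by $-\sigma$. Hence $\log L_k$ changes by $2\pi i\sigma$ and $\overline{\log L_k}$ by $-2\pi i\sigma$, so the exponent $\alpha_k\log L_k+\overline{\alpha_k}\,\overline{\log L_k}$ changes by
$$
2\pi i\sigma(\alpha_k-\overline{\alpha_k})=-4\pi\sigma\, Im(\alpha_k).
$$

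Multiplying over $k$, one turn multiplies $H$ by $\exp\bigl(-4\pi\sigma\, Im\sum_k\alpha_k\bigr)$. This factor equals $1$ if and only if $Im\sum_k\alpha_k=0$, since it is the exponential of a real number. If the condition holds, continuation along any loop returns to the same branch, so $H$ is a well defined analytic function on $\R^2\setminus\{0\}$. Conversely, if $H$ is single valued, the factor must be $1$, which forces $Im\sum\alpha_k=0$. This is the same computation as the model example $(x+iy)^{\alpha i}(x-iy)^{-\alpha i}=e^{-2\alpha\varphi}$ given just before the proposition.

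The main obstacle is the uniform winding claim: every $L_k$ with $Im(y_k/x_k)>0$ has the same winding number $\pm1$ about $0$. The identity above settles this. One also has to remember that $x_k\neq0$ is implicit in the normalization; if $x_k=0$, then $L_k=y_kx$ would vanish on the real line $x=0$, which is excluded.
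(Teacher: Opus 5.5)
Your proof is correct and follows the paper's own argument. Like the paper, you compute the monodromy of each conjugate pair along one positive loop: each $L_k$ winds by the same $\pm1$ and its conjugate by the opposite amount, which gives the factor $e^{-4\pi\sigma\, Im\sum_k\alpha_k}$, exactly as in the model example $(x+iy)^{\alpha i}(x-iy)^{-\alpha i}=e^{-2\alpha\varphi}$.

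One small slip, harmless since, as you note, only the uniformity of the sign matters: the correct identity is $\sin\varphi-c\cos\varphi=\frac{1}{2i}\bigl((1-ic)e^{i\varphi}-(1+ic)e^{-i\varphi}\bigr)$. So the winding number is $+1$ exactly when $Im(c)>0$, which matches the paper's convention.
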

Taking finally into consideration that
\begin{align*}
e^{\frac{p_{k}(x,y)}{(y_k x- x_k y)^{n_k}}} e^{\frac{\overline{p_{k}}(x,y)}{(\overline{y_k} x- \overline{x_k} y)^{n_k}}}
&=
\exp[ \frac{p_{k}(x,y)}{(y_k x- x_k y)^{n_k}} +\frac{\overline{p_{k}}(x,y)}{(\overline{y_k} x- \overline{x_k} y)^{n_k}}] \\
 &= \exp[ \frac{q_k(x,y)} {\lvert y_k x- x_k y \rvert^{2 n_k}}]
\end{align*}
where $q_k(x,y)$ is a suitable homogeneous degree $2 n_k$ real polynomial, we conclude that
the above this expression is analytic on $\R^2\setminus \{(0,0)\}$. We obtain therefore
\begin{theorem}[geometric solution of the center-focus problem]
\label{geom}
The homogeneous polynomial vector field (\ref{ic}) has a center at the origin if and only if the following conditions are satisfied
\begin{enumerate}
\item [(i)] The inverse integrating factor $yP(x,y)-xQ(x,y)$ does not vanish on the punctured plane $\R^2 \setminus \{(0,0)\}$;
\item [(ii)] $Im(\sum_k  \alpha_k ) = 0 $ where $ \alpha_k$ are the exponents of the first integral (\ref{firstintegral}), with the convention that $Im(x_k/y_k)>0$ .
\end{enumerate}
\end{theorem}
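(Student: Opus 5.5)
The argument has two directions. Both rest on two facts established above: the Darboux first integral (\ref{firstintegral}), and the observation that a homogeneous vector field of degree $d\ge 2$ has a center at the origin if and only if its first integral is single valued and analytic on $\R^2\setminus\{(0,0)\}$.

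\emph{Necessity.} Assume (\ref{ic}) has a center. Then every real ray from the origin is crossed transversally by the closed orbits. Suppose, to the contrary, that $yP-xQ$ vanished at some point $(x_0,y_0)\neq(0,0)$. Homogeneity would then make it vanish on the whole line through $(x_0,y_0)$. By the first remark after Theorem \ref{prop1}, this line is invariant, since $\Gamma$ is tangent to the field. An invariant real ray cannot be crossed by periodic orbits surrounding the origin, so this contradicts the center and proves (i). Consequently every factor $y_kx-x_ky$ has $x_k/y_k\notin\R$, and the factors come in conjugate pairs, so the first integral has the form (\ref{firstintegral}). The exponential factors are real analytic and single valued on the punctured plane, by the computation giving $\exp[q_k/|y_kx-x_ky|^{2n_k}]$. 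Hence single-valuedness of $H$ depends only on the power factors. By the Proposition, these are single valued exactly when $Im(\sum_k\alpha_k)=0$. A center forces $H$ to be single valued, because each orbit is closed and $H$ is constant along it; this gives (ii). To make this rigorous, I would argue that the monodromy of $H$ around the origin is multiplication by $e^{-4\pi Im\sum\alpha_k}$, up to sign conventions. This multiplier must equal $1$, since after one turn along a closed orbit $H$ returns to the same value, and $H$ is non-constant near that orbit.

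\emph{Sufficiency.} Assume (i) and (ii). By (i) there are no real invariant lines, so the factorization has conjugate pairs only and $H$ has the form (\ref{firstintegral}). By (ii) and the Proposition, $H$ is a single valued real analytic function on $\R^2\setminus\{(0,0)\}$, and it is non-constant. In polar coordinates, condition (i) gives $r^2\dot\varphi=xQ-yP\neq0$ off the origin, so every orbit winds monotonically around the origin. Homogeneity gives the relation $H(\lambda x,\lambda y)=\lambda^{c}H(x,y)$ on the exponential-free part, where $c=\sum_k 2Re\,\alpha_k$, while the exponential part is invariant under scaling. Consider the orbit starting at a point of the positive $x$-axis. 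After one turn it returns to the same ray, and $H$ takes the same value there. If $c\ne0$ and the value is nonzero, this forces the return point to coincide with the starting point, so the orbit is periodic and the origin is a center.

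The main obstacle is the degenerate case $c=0$, where scaling leaves $H$ unchanged and $H$ is only a function of $\varphi$. In that case I would argue instead through the standard polar reduction $dr/d\varphi=r\,f(\varphi)$. The return map is then $r\mapsto r\,e^{\int_0^{2\pi}f}$, and I would aim to identify $\int_0^{2\pi}f$ with a multiple of $Im\sum\alpha_k$ through the residue computation used in the proof of Theorem \ref{prop1}. This identification would show that the center condition is exactly (ii), and would handle both directions uniformly.
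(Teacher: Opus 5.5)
Your proposal is correct and takes the paper's route. You derive (i) from the invariance of $\Gamma$, reduce the center property to single-valuedness of the Darboux integral (\ref{firstintegral}) on $\R^2\setminus\{(0,0)\}$, note that the exponential factors are single valued, and apply the Proposition to the power factors; beyond this, you supply the justification of ``center $\Leftrightarrow$ $H$ single valued'' (monodromy along a closed orbit, injectivity of $H$ on a ray) that the paper only asserts.

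The case $c=0$ that you call the main obstacle never occurs. Your $c=\sum_k 2\,Re\,\alpha_k$ is the sum of the exponents over all $d+1$ lines. The residue computation in the proof of Theorem \ref{prop1} gives this sum as $\lim_{w\to\infty} wQ(w,1)/(P(w,1)-wQ(w,1))=-1$, because by (i) the line $y=0$ is not invariant, so the denominator has degree exactly $d+1$. Hence $c=-1$, and your ray argument already completes sufficiency. Your polar-coordinate fallback would also have worked, since $\int_0^{2\pi}F/G\,d\theta=\pm4\pi\,Im\sum_k\alpha_k$.
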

 To apply the above Theorem we need to compute the first integral of (\ref{firstintegral}). A more intrinsic definition of the exponents $\alpha_k$ is the following. The one-form
$$
\frac{P(x,y) dy - Q(x,y) dx}{yP(x,y)-xQ(x,y)}
$$
is logarithmic, and $\alpha_k$ are the residues along its complex invariant lines.

The counterpart of this geometric characterization of center is the following classical result, which goes back at least to Forster \cite[1937]{fors37}, see also \cite[p.80]{nest60}. Namely, in polar coordinates
$$
x= r \cos(\theta), y= r \sin (\theta)
$$
the degree $d$ homogeneous foliation $P(x,y) dy - Q(x,y) dx = 0$ takes the form
$$
G(\theta) dr - r F(\theta) d \theta = 0
$$
with solutions
$$
r(\theta) = r_0 exp(\int_{\theta_0}^\theta \frac{F(\theta)}{G(\theta)} d\theta),
$$
where
\begin{align*}
F(\theta)&=\cos(\theta) P(\cos(\theta), \sin(\theta))+\sin(\theta) Q(\cos(\theta),\sin(\theta)),\\
G(\theta)&=\cos(\theta) Q(\cos(\theta),\sin(\theta))-\sin(\theta) P(\cos(\theta),\sin(\theta)).
\end{align*}
Therefore we have
\begin{theorem}[analytic solution of the center-focus problem]
\label{ana}
The degree $d$ homogeneous foliation $$P(x,y) dy - Q(x,y) dx = 0$$ has a center if and only if
\begin{itemize}
\item[(i)] The polynomial $yP(x,y)-xQ(x,y)$ does not vanish on the punctured plane $\R^2\setminus \{(0,0)\} $;
\item[(ii)]
$$
\int_{0}^{2\pi} \frac{F(\theta)}{G(\theta)}d\theta =0.
$$
\end{itemize}

\end{theorem}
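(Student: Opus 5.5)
We pass to polar coordinates $x=r\cos\theta$, $y=r\sin\theta$ and use homogeneity to reduce the phase portrait to a one-dimensional problem. Since $P,Q$ are homogeneous of degree $d$, we have $P(r\cos\theta,r\sin\theta)=r^dP(\cos\theta,\sin\theta)$, and likewise for $Q$. Substituting $dx=\cos\theta\,dr-r\sin\theta\,d\theta$ and $dy=\sin\theta\,dr+r\cos\theta\,d\theta$ into $P\,dy-Q\,dx$ gives
$$
r^d\big[(P\sin\theta-Q\cos\theta)\,dr+r(P\cos\theta+Q\sin\theta)\,d\theta\big].
$$
After dividing by $r^d$ and fixing the sign convention so that it matches the stated $F,G$, the foliation becomes $G(\theta)\,dr-rF(\theta)\,d\theta=0$. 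Note that $G(\theta)=r^{-(d+1)}(xQ-yP)$. Condition (i) is therefore equivalent to $G(\theta)\neq 0$ for all $\theta\in[0,2\pi]$.

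Sufficiency. Assume (i) and (ii). Since $G$ never vanishes, every leaf in $\R^2\setminus\{(0,0)\}$ is transverse to the rays $\theta=\mathrm{const}$. Each leaf can then be written as a graph $r=r(\theta)$, given by $r(\theta)=r_0\exp\big(\int_{\theta_0}^\theta F/G\big)$, and $\theta$ is strictly monotone along orbits. Following a solution once around, from $\theta_0$ to $\theta_0+2\pi$, the return map on the ray $\theta=\theta_0$ is $r_0\mapsto r_0\exp\big(\int_0^{2\pi}F/G\,d\theta\big)$, using the $2\pi$-periodicity of $F/G$. By (ii) this map is the identity, so every orbit is closed and surrounds the origin. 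Hence the origin is a center, and in fact a global one.

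Necessity. Suppose the origin is a center. First we show (i). If $G(\theta_*)=0$ for some $\theta_*$, then on the ray $\theta=\theta_*$ the vector field is parallel to the radial field, because $xQ-yP=0$ there. The ray is then invariant: it is either a union of orbits or consists of equilibria, the latter happening if $P$ and $Q$ both vanish on it. In either case it is an invariant half-line through the origin. Every periodic orbit surrounding the origin would have to cross it, which contradicts the fact that orbits cannot cross an invariant set. Hence $G$ never vanishes. With (i) established, the return map is the one computed above, namely multiplication by $\exp\big(\int_0^{2\pi}F/G\big)$. Closed orbits near the origin force this factor to equal $1$, which gives (ii).

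The main obstacle is the necessity of (i), namely handling a ray on which $G$ vanishes. The key point is that such a ray is invariant under the flow. If $P$ and $Q$ vanish identically on the ray, the ray consists of equilibria, and the origin is then not an isolated singularity, so it is not a center. Otherwise the ray is a single orbit, and a closed orbit around the origin cannot cross it. Once this is in place, the rest is the routine monodromy computation above.
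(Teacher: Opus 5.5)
Your proposal is correct and follows the paper's route. You pass to polar coordinates, reduce the foliation to $G(\theta)\,dr - rF(\theta)\,d\theta = 0$, and read the return map $r_0\mapsto r_0\exp\bigl(\int_0^{2\pi}F/G\,d\theta\bigr)$ off the explicit solutions $r(\theta)=r_0\exp\bigl(\int_{\theta_0}^{\theta}F/G\,d\theta\bigr)$. Your invariant-ray argument for the necessity of (i) supplies the step the paper leaves implicit; the paper only remarks earlier that a real homogeneous field with a center has no real invariant lines.
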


 \subsection{Quasi-homogeneous vector fields}

It is possible to follow the above approach to integrate any  $(s_1,s_2)$-quasi-homogeneous planar differential system, after introducing a new variable
 $w=\frac{ x^{s_2} }{y^{s_1}}$. However, we do not need to do this. It is enough to observe that \emph{every quasi-homogeneous differential system is the pullback of a homogeneous one},  which can also be referred to Theorem 1 in \cite{TangZhang} and Theorem 3 in \cite{ZhangYu}. To be more precise, consider a planar foliation $P(x,y) dy - Q(x,y) dx = 0$ defined by two homogeneous polynomials $P, Q$. Upon substituting
\begin{equation}
\label{xys}
 x \to x^{s_2} , y \mapsto y^{s_1}
 \end{equation}
 we obtain a quasi-homogeneous planar foliation of weighted degrees $(s_1,s_2)$. We claim that the converse also holds :
  \begin{theorem}[pull back]
 \label{pb}
 Every $(s_1,s_2)$-quasi-homogeneous planar foliation
 $$
 P(x,y) dy - Q(x,y) dx = 0
 $$
 is a pull-back of a homogeneous planar foliation
 $$P^*(x,y) dy - Q^*(x,y) dx = 0$$
 ($P,Q$ are homogeneous polynomials of the same degree) under the map (\ref{xys}).
 \end{theorem}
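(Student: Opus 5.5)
The plan is to work with logarithmic one-forms and the weight-zero invariant of the scaling group $\Phi_\lambda$. Let $\pi(x,y)=(X,Y)=(x^{s_2},y^{s_1})$. Under (\ref{ql}) we get $\pi\circ\Phi_\lambda=\lambda^{s_1s_2}\pi$, so the weighted scaling upstairs becomes the ordinary homogeneous scaling downstairs. The function
\[
w=\frac{x^{s_2}}{y^{s_1}}=\frac{X}{Y}
\]
is invariant on both sides. It is enough to prove the statement on the open set $xy\neq 0$ and for foliations considered up to multiplication by a nonvanishing rational function. At the end we clear denominators to get polynomial coefficients.

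\emph{Step 1 (weight-zero rational functions).} Every $(s_1,s_2)$-quasi-homogeneous rational function of weight $0$ is a rational function of $w$. To see this, divide numerator and denominator by a suitable monomial so that we only compare monomials of equal weight. A ratio $x^ay^b$ has weight $as_1+bs_2$, which vanishes exactly when $(a,b)=k(s_2,-s_1)$ for some $k\in\mathbb Z$, because $\gcd(s_1,s_2)=1$. So such a ratio is $w^k$.

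\emph{Step 2 (normal form upstairs).} Write
\[
\omega_0=Q\,dx-P\,dy=xQ\,\frac{dx}{x}-yP\,\frac{dy}{y}.
\]
Since $m=d_2+s_1=d_1+s_2$, the polynomials $xQ$ and $yP$ are quasi-homogeneous of the same weight $m$. Assume $P\not\equiv 0$; the case $P\equiv0$ gives the foliation $dx=0$, which is trivially the pullback of $dX=0$. Then $\rho:=xQ/(yP)$ has weight zero, and by Step 1 $\rho=\rho(w)$ is rational in $w$. Hence the foliation is
\[
\rho(w)\,\frac{dx}{x}-\frac{dy}{y}=0 .
\]

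\emph{Step 3 (the homogeneous foliation downstairs).} We have $\pi^*(dX/X)=s_2\,dx/x$ and $\pi^*(dY/Y)=s_1\,dy/y$. Define the foliation on the $(X,Y)$-plane by
\[
\rho\!\left(\tfrac{X}{Y}\right)\frac{dX}{X}-\frac{s_2}{s_1}\frac{dY}{Y}=0 .
\]
Its pullback by $\pi$ is $s_2\bigl(\rho(w)\,dx/x-dy/y\bigr)$, which defines the original foliation. Now write $\rho=A/B$ with $A,B$ polynomials. Multiplying by $XY\,B(X/Y)\,Y^{N}$ with $N\ge\max(\deg A,\deg B)$ gives
\[
P^*\,dY-Q^*\,dX=0,\qquad Q^*=Y^{N+1}A(X/Y),\quad P^*=\tfrac{s_2}{s_1}XY^{N}B(X/Y).
\]
Both $P^*$ and $Q^*$ are homogeneous polynomials of degree $N+1$. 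Removing any common factor keeps them homogeneous of equal degree.

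\emph{Step 4 (main obstacle).} The only delicate point is Step 1, together with the bookkeeping of the multiplying factors when translating between the foliation given by $\omega_0$ and $\pi^*(Q^*dX-P^*dY)$. These factors are rational functions, nonvanishing off the axes, and the identity extends as an equality of foliations on the plane. Also, the substitution $x\mapsto x^{s_2}$, $y\mapsto y^{s_1}$ in $P^*dy-Q^*dx$ means exactly pulling back the one-form by $\pi$. The factors $s_2x^{s_2-1}$ and $s_1y^{s_1-1}$ that appear are what produce the logarithmic normalization used above.
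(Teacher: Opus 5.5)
Your proposal is correct and follows essentially the paper's route. The paper also reduces, via Proposition \ref{quasir}, the weight-zero ratio of $xQ$ and $yP$ to a function of $x^{s_2},y^{s_1}$, writes the foliation in the logarithmic form $A(x^{s_2},y^{s_1})\,dy/y-B(x^{s_2},y^{s_1})\,dx/x=0$, and concludes with $s_2\,dx/x=dx^{s_2}/x^{s_2}$ and $s_1\,dy/y=dy^{s_1}/y^{s_1}$. You differ only in proving the weight-zero step directly by comparing monomials (using $\gcd(s_1,s_2)=1$) rather than through the factorization of Proposition \ref{quasi}, and in applying it explicitly to the ratio $xQ/(yP)$ rather than to $yP$ and $xQ$ separately, which makes your argument slightly more self-contained.
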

 Let us prove first the following two elementary Propositions.

 \begin{proposition}
 \label{quasi}
 Let $H(x,y)$ be a quasi-homogeneous polynomial of weighted degree $(s_1,s_2)$, where $s_1,s_2$ are mutually prime positive integers. Then
\begin{equation}
\label{H}
  H(x,y) = x^{m_1}y^{m_2} \Pi_{k} (y_k x^{s_2}- x_k y^{s_1})^{n_k}
\end{equation}
  where $m_1\geq 0, m_2 \geq 0, n_k >0$ are integers, and $(x_k,y_k)$ are non zero complex vectors.
 \end{proposition}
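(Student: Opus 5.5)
The plan is to reduce to a statement about polynomials in one variable by looking at which monomials can occur in $H$. Write $H=\sum c_{ij}x^iy^j$. The relation $H(\lambda^{s_1}x,\lambda^{s_2}y)=\lambda^{d}H(x,y)$ forces every monomial with $c_{ij}\neq 0$ to satisfy $s_1 i+s_2 j=d$. Fix one such pair $(i_0,j_0)$. Any other admissible pair then satisfies $s_1(i-i_0)=-s_2(j-j_0)$. Because $\gcd(s_1,s_2)=1$, we get $i-i_0=s_2 t$ and $j-j_0=-s_1 t$ for some integer $t$. So the exponent pairs lie on the lattice line $(i_0,j_0)+t(s_2,-s_1)$.

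Next, let $m_1$ be the smallest $i$ and $m_2$ the smallest $j$ among the monomials that actually occur. Each occurring monomial has the form $x^{m_1+s_2 a}\,y^{m_2+s_1 b}$ with $a,b\ge 0$ integers. By the line relation above, $a+b$ is the same for all of them; call it $N$. This gives
\[
H(x,y)=x^{m_1}y^{m_2}\sum_{a=0}^{N} e_a\,(x^{s_2})^{a}(y^{s_1})^{N-a}.
\]
In other words, $H=x^{m_1}y^{m_2}\,F(x^{s_2},y^{s_1})$, where $F(X,Y)=\sum_a e_a X^aY^{N-a}$ is a homogeneous polynomial of degree $N$ in two variables.

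Then I factor $F$ over $\C$. A binary form of degree $N$ splits completely into linear factors, by the fundamental theorem of algebra applied to $F(X,1)$ with the factors of $Y$ accounted for. This gives $F(X,Y)=c\prod_k (y_kX-x_kY)^{n_k}$ with distinct lines, $n_k>0$ and $(x_k,y_k)\neq 0$. Any factors of pure $X$ or pure $Y$ can either be absorbed into $x^{m_1}y^{m_2}$ or kept in the product; by the choice of $m_1,m_2$ they in fact do not occur. The constant $c$ can be absorbed into one of the vectors $(x_k,y_k)$, or into the monomial if $N=0$. Substituting $X=x^{s_2}$ and $Y=y^{s_1}$ gives exactly \eqref{H}.

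The only step needing care is the lattice argument that the exponents differ by integer multiples of $(s_2,-s_1)$. This is precisely where the coprimality of $s_1,s_2$ is used. Everything else is bookkeeping.
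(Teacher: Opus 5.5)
Your proof is correct and takes essentially the route the paper intends: the paper omits the argument and defers to Proposition 6 of \cite{FHX}, which is the same reduction you give (coprimality of $s_1,s_2$ forces the exponent pairs to differ by integer multiples of $(s_2,-s_1)$, so $H$ is a monomial times a binary form in $x^{s_2},y^{s_1}$, and that form splits into linear factors over $\C$). One small correction: when $N=0$, a constant $c\neq 1$ cannot literally be absorbed into $x^{m_1}y^{m_2}$ (for example $H\equiv 2$), so \eqref{H} must be read up to a nonzero multiplicative constant, an imprecision already present in the statement itself.
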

 \proof
  If we impose the additional condition that the polynomial $H$ is real, and has no non-trivial irreducible components in $\R[x,y]$, and assuming the above Proposition,   we get that the vectors $(x_k,y_k)$ appear in complex-conjugate pairs, and that $m_1=m_2=0$ which is
 the claim of \cite[Proposition 6]{FHX}. The proof of the  Proposition \ref{quasi} is therefore similar to the proof of Proposition 6 \cite{FHX} (and is omitted).
 \endproof
 \begin{proposition}
 \label{quasir}
 Let $H(x,y)$ be a $(s_1,s_2)$-quasi-homogeneous rational function of weighted degree $0$.
 Then there exists a homogeneous rational function $H^*(x,y)$ of degree $0$, such that $$H(x,y)= H^*(x^{s_2},y^{s_1}).$$
 \end{proposition}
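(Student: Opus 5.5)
Write $H=A/B$ with $A,B\in\C[x,y]$ coprime. The plan is to show that $A$ and $B$ are both quasi-homogeneous of the same weighted degree. Then Proposition \ref{quasi} factors each of them, and the common monomial and binomial factors reassemble into a homogeneous rational function of $(x^{s_2},y^{s_1})$.

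\emph{Step 1: $A,B$ are quasi-homogeneous of equal weight.} The identity $H(\lambda^{s_1}x,\lambda^{s_2}y)=H(x,y)$ gives
$$A(\lambda^{s_1}x,\lambda^{s_2}y)\,B(x,y)=A(x,y)\,B(\lambda^{s_1}x,\lambda^{s_2}y).$$
Since $A,B$ are coprime, $A$ divides $A(\lambda^{s_1}x,\lambda^{s_2}y)$, and the two polynomials have the same degree. Hence $A(\lambda^{s_1}x,\lambda^{s_2}y)=c(\lambda)A(x,y)$, and likewise $B(\lambda^{s_1}x,\lambda^{s_2}y)=c(\lambda)B(x,y)$ with the same $c(\lambda)$. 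Decompose $A=\sum_j A_j$ into weighted-homogeneous components, where $A_j$ has weighted degree $j$. Comparing components gives $\lambda^jA_j=c(\lambda)A_j$ for all $\lambda>0$, so only one $j$ occurs. The same holds for $B$, and its weighted degree is the same $j$ because $c(\lambda)=\lambda^j$.

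\emph{Step 2: factor.} Apply Proposition \ref{quasi} to each polynomial:
$$A=x^{m_1}y^{m_2}\prod_k (y_kx^{s_2}-x_ky^{s_1})^{n_k},\qquad B=x^{m_1'}y^{m_2'}\prod_l (y_l'x^{s_2}-x_l'y^{s_1})^{n_l'}.$$
Each binomial factor has weight $s_1s_2$. Equal weights therefore give
$$s_1(m_1-m_1')+s_2(m_2-m_2')+s_1s_2(N-N')=0,$$
where $N=\sum_k n_k$ and $N'=\sum_l n_l'$.

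\emph{Step 3: the monomial part.} Since $\gcd(s_1,s_2)=1$, the relation in Step 2 gives $s_2\mid m_1-m_1'$ and $s_1\mid m_2-m_2'$. Write $m_1-m_1'=s_2a$ and $m_2-m_2'=s_1b$; dividing the relation by $s_1s_2$ yields $a+b+N-N'=0$. Then
$$x^{m_1-m_1'}y^{m_2-m_2'}=X^aY^b,\qquad\text{where } X=x^{s_2},\ Y=y^{s_1},$$
and each binomial factor becomes $y_kX-x_kY$, which is linear homogeneous. Set
$$H^*(X,Y)=X^aY^b\,\frac{\prod_k (y_kX-x_kY)^{n_k}}{\prod_l (y_l'X-x_l'Y)^{n_l'}}.$$
By $a+b+N-N'=0$, this is homogeneous rational of degree $0$, and $H(x,y)=H^*(x^{s_2},y^{s_1})$ by construction.

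The main obstacle is the divisibility step in Step 3, which forces the leftover monomial exponents to be multiples of $s_2$ and $s_1$ respectively. This is where coprimality of $s_1,s_2$ is used. Without the degree-zero hypothesis the monomial part would not combine to a function of $(X,Y)$. If $H$ is real, a real $H^*$ can be obtained by averaging with its conjugate, or simply noting that $H^*(X,Y)=H(X^{1/s_2},Y^{1/s_1})$ on the positive quadrant is real there. Realness is not needed for the statement as posed.
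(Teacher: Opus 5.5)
Your proof is correct and follows the paper's route: you write $H$ as a ratio of quasi-homogeneous polynomials of equal weight, factor both via Proposition~\ref{quasi}, and use $\gcd(s_1,s_2)=1$ to force the leftover monomial exponents to be multiples of $s_2$ and $s_1$. Your Step~1 (coprimality of $A,B$ plus comparison of weighted components) also justifies that the numerator and denominator are quasi-homogeneous of the same weight, which the paper only asserts.
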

\proof
A rational $(s_1,s_2)$-quasi-homogeneous function is a ratio of two  $(s_1,s_2)$-quasi-homogeneous polynomials of the same weighted degree. Applying Proposition \ref{quasi} we obtain that $H$ is a product of a rational function in $x^{s_2},y^{s_1}$ and the term $x^{n_1}y^{n_2}$ where $n_1,n_2$ are integers (not necessarily positive).
As the weighted degree of $H$ is zero, then $s_1s_2$ divides $s_1 n_1 + s_2 n_2$ which implies that $s_2$ divides $n_1$ and that $s_1$ divides $n_2$. We conclude that
$H$ is in fact a rational function in $x^{s_2},y^{s_1}$ and $H^*$ defined by $H(x,y)= H^*(x^{s_2},y^{s_1})$ is a homogeneous rational function.
\endproof
\proof[Proof of Theorem \ref{pb}]
If the complex planar foliation
$$
P(x,y) dy - Q(x,y) dx = 0
$$
 is quasi-homogeneous, then the polynomials $yP(x,y)$ and $xQ(x,y)$ are quasi-homogeneous of the same weighted degree.
 Applying Proposition \ref{quasir} to $yP, xQ$ we get that the quasi-homogeneous planar foliation can be written in the form
 $$
 A(x^{s_2},y^{s_1}) \frac{dy}{y} - B(x^{s_2},y^{s_1}) \frac{dx}{x} = 0, A,B \in \R[x,y] .
 $$
 Finally we note that
 $$
 s_2\frac{dx}{x} = \frac{dx^{s_2}}{x^{s_2}}, s_1\frac{dy}{y} = \frac{dy^{s_1}}{y^{s_1}}.
 $$
 It means that the quasi-homogeneous foliation $P(x,y) dy - Q(x,y) dx = 0$ is the pull-back of the homogeneous planar foliation
 $$
 s_2xA(x,y)dy-s_1yB(x,y)dx=0
 $$
 under the map (\ref{xys}).
 \endproof

 Assume that $P(x,y) dy - Q(x,y) dx = 0$  is a $(s_1,s_2)$-quasi-homogeneous foliation. According to Theorem \ref{pb} it is a pull back of a homogeneous degree $d$ foliation $P^*(x,y) dy - Q^*(x,y) dx = 0$ under the map $x\to x^{s_2}, y\to y^{s_1}$.

 We are ready to formulate the analogues of Theorem \ref{geom} and Theorem \ref{ana}  in this case.
 \begin{theorem}[geometric solution of the center-focus problem]
 \label{geom*}

 The $(s_1,s_2)$ quasi-homogeneous foliation $P(x,y) dy - Q(x,y) dx = 0$ has a center if and only if one of the following conditions are satisfied
 \begin{itemize}
\item [(i)]   $s_1,s_2$ are odd and the homogeneous foliation $$P^*(x,y) dy - Q^*(x,y) dx = 0$$ has a center,
or
\item[(ii)] one of the weights $s_1, s_2$ is odd, the other is even, and  the homogeneous foliation $$P^*(x,y) dy - Q^*(x,y) dx = 0$$ has a center or a focus.
\end{itemize}
 \end{theorem}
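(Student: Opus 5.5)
The plan is to study the map $\pi:(x,y)\mapsto (u,v)=(x^{s_2},y^{s_1})$, which by Theorem \ref{pb} carries the leaves of the quasi-homogeneous foliation onto those of the homogeneous foliation $P^*dv-Q^*du=0$. The key observation is that $\pi$ acts on a neighbourhood of a circle around the origin as a map between punctured planes, and its behaviour depends on the parities of $s_1,s_2$. Since $s_1,s_2$ are coprime, they are not both even, so there are only two cases.

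\textbf{Both weights odd.} In this case $t\mapsto t^{s_i}$ is a homeomorphism of $\R$, so $\pi$ is a homeomorphism of $\R^2$ fixing the origin, and it is a real-analytic diffeomorphism off the axes. It maps leaves to leaves and preserves the topological type of the singular point. Hence the quasi-homogeneous foliation has a center if and only if the homogeneous one does.

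\textbf{One weight even.} Say $s_2$ is even and $s_1$ is odd; the other case is symmetric. Then $\pi$ folds the plane: it is a two-to-one map onto the closed half plane $\{u\ge 0\}$, identifying $(x,y)$ with $(-x,y)$. I would pass to quasi-polar coordinates $x=r^{s_1}Cn\,\theta$, $y=r^{s_2}Sn\,\theta$, in which the quasi-homogeneous foliation becomes $G(\theta)\,dr - rF(\theta)\,d\theta=0$, exactly as in the homogeneous case. It then has a center iff (i) $G$ has no zeros, and (ii) $\int_0^{T}F/G\,d\theta=0$ over the full generalized period $T$.

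The symmetry $(x,y)\mapsto(-x,y)$ preserves the foliation, because $P^*,Q^*$ are evaluated at $x^{s_2}$ with $s_2$ even. This symmetry forces the two halves of the return map to compose into the identity. Concretely, an orbit starting on the positive $y$-axis, followed through $x>0$ to the negative $y$-axis, has a mirror image in $x<0$ returning along the reflected path. So the Poincaré map is the identity precisely when there is no invariant ray, i.e. the foliation is monodromic. A reversible monodromic singularity is a center.

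It remains to translate monodromy back to the homogeneous foliation. The homogeneous field restricted to $u>0$ must cross the $v$-axis transversally, with no invariant line in the open half plane $\{u>0\}$ and the $v$-axis itself not invariant. This is exactly the condition that the homogeneous foliation has no real invariant lines, which by Theorem \ref{geom} means that $yP^*-xQ^*$ does not vanish on the punctured plane, i.e. $P^*dv-Q^*du=0$ is monodromic: a center or a focus. The converse goes the same way: monodromy upstairs means the inverse integrating factor $yP-xQ$ is nonvanishing, and this descends to the inverse integrating factor of the homogeneous foliation.

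\textbf{Main obstacle.} The delicate step is the behaviour along the fold line $x=0$. I need to show that an invariant line of the homogeneous foliation on $u=0$, or any real invariant line of it, corresponds to an invariant curve upstairs, and conversely. For this I would compare the factorizations given by Proposition \ref{quasi}: the factor $y_kx^{s_2}-x_ky^{s_1}$ is real-vanishing somewhere off the origin exactly when $(x_k,y_k)$ is real up to scaling, and the factors $x^{m_1},y^{m_2}$ correspond to the coordinate axes.
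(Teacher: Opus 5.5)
Your proposal is correct and is essentially the paper's proof. When both weights are odd, $\pi$ is a homeomorphism carrying leaves to leaves; when one weight is even, the foliation is invariant under the reflection fixing the fold line, so it is a center exactly when it has no real separatrix, i.e.\ exactly when the homogeneous foliation has no real invariant line (a center or a focus). One small correction: the quasi-homogeneous inverse integrating factor is $s_1xQ-s_2yP$, not $yP-xQ$. Since $\pi^*(P^*dv-Q^*du)$ is a constant multiple of $P\,dy-Q\,dx$ and $\pi$ pushes $s_1x\partial_x+s_2y\partial_y$ to $s_1s_2(u\partial_u+v\partial_v)$, it coincides up to a nonzero constant with $(vP^*-uQ^*)\circ\pi$; this settles the fold-line issue you single out as the main obstacle at once, without the factorization of Proposition \ref{quasi}.
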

 \proof
 The proof is straightforward. If $s_1,s_2$ are odd, the map $x\to x^{s_2}, y\to y^{s_1}$ is a diffeomorphism of $\R^2\setminus \{(0,0)\}$. If $s_1$ is even but $s_2$ is odd the image of the map $x\to x^{s_2}, y\to y^{s_1}$ is the half plane $y>0$. Therefore if the homogeneous foliation  $P^*(x,y) dy - Q^*(x,y) dx = 0$ has an invariant real line, then the quasi-homogeneous foliation has a separatrix and hence has no center. It remains to consider the case when the homogeneous foliation has a center or a focus. But in this case the quasi-homogeneous foliation can not have a real separatrix. Therefore it has a center or a focus. Note however that it is also reversible, it is invariant under the involution $(x,y)\to(x,-y)$, and hence has a center.\endproof




Using once again the Theorem \ref{pb} (pull back) we obtain the following analogue of Theorem \ref{ana}. We just use on the place of the usual polar coordinates by the generalized Liapounov coordinates \cite{L,Li}
$$
x= r Cs(\theta), y= r Sn(\theta)
$$
and substitute them in Theorem \ref{ana}.
\begin{theorem}[analytic solution of the center-focus problem]
\label{thm-1}

A $(s_1,s_2)$-quasi-homogeneous system of weight degree $m$  has a center  if and only if the following conditions hold.
\begin{itemize}
\item[(i)] $G(x,y)=s_1xQ(x,y)-s_2yP(x,y)$ does not vanish on the punctured plane
$\mathbb{R}^2\setminus \{(0,0)\}$;
\item[(ii)]
$$
\int_0^\tau\frac{F(\theta)}{G(\theta)}d\theta=0,
$$
where
\begin{equation}
\label{eq-fg}
\begin{aligned}
&F(\theta)=Cs^{2s_2-1}(\theta) P(Cs(\theta), Sn(\theta))+Sn^{2s_1-1}(\theta) Q(Cs(\theta),Sn(\theta)),\\
&G(\theta)=s_1 Cs(\theta) Q(Cs(\theta),Sn(\theta))-s_2 Sn(\theta) P(Cs(\theta),Sn(\theta)),
\end{aligned}
\end{equation}
and $Sn, Cs$ are the Lyapunov sine and cosine $(s_1,s_2)-$ trigonometric functions respectively with a period  $\tau$.
\end{itemize}
\end{theorem}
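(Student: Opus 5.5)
The plan is to prove Theorem \ref{thm-1} directly in generalized Lyapunov polar coordinates, in the same way that Theorem \ref{ana} follows from ordinary polar coordinates, rather than transporting Theorem \ref{ana} through the pull-back of Theorem \ref{pb}. Let $(Cs(\theta),Sn(\theta))$ be the solution of $\dot{C}=-Sn^{2s_1-1}$, $\dot{S}=Cs^{2s_2-1}$ with $Cs(0)=1$, $Sn(0)=0$. The function $s_2 Cs^{2s_1}+s_1 Sn^{2s_2}$ is constant along this solution, so the solution is periodic with some period $\tau$. We then substitute
$$x=r^{s_1}Cs(\theta),\qquad y=r^{s_2}Sn(\theta),\qquad r>0.$$
This substitution is a real-analytic diffeomorphism from $(0,\infty)\times(\R/\tau\mathbb{Z})$ onto $\R^2\setminus\{(0,0)\}$. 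Its Jacobian is $r^{s_1+s_2-1}(s_1Cs^{2s_2}+s_2Sn^{2s_1})$, up to the normalisation of the Lyapunov functions, and this is positive. The exponents in (\ref{eq-fg}) should be matched to whichever convention is used for $Cs$ and $Sn$.

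Next we compute $\omega_0=Q\,dx-P\,dy$ in the new coordinates. Write $d_1,d_2$ for the weight degrees of $P$ and $Q$, so that $P(x,y)=r^{d_1}P(Cs,Sn)$ and $Q(x,y)=r^{d_2}Q(Cs,Sn)$. We also have
$$dx=s_1r^{s_1-1}Cs\,dr-r^{s_1}Sn^{2s_1-1}d\theta,\qquad dy=s_2r^{s_2-1}Sn\,dr+r^{s_2}Cs^{2s_2-1}d\theta.$$
Since $m=d_2+s_1=d_1+s_2$, this gives
$$\omega_0=r^{m-1}\bigl(G(\theta)\,dr-r F(\theta)\,d\theta\bigr),$$
where $F$ and $G$ are as in (\ref{eq-fg}). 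Here $G(\theta)=G(Cs,Sn)$, which is the restriction of $s_1xQ-s_2yP$ to the generalized unit circle. The foliation on the punctured plane is therefore the foliation of $G(\theta)\,dr-rF(\theta)\,d\theta=0$ on the cylinder. Note that $G(x,y)=r^{m}G(\theta)$, because $G$ is quasi-homogeneous.

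For sufficiency, assume (i). Then $G(\theta)\neq 0$, so the leaves are the graphs
$$r(\theta)=r_0\exp\Bigl(\int_0^\theta \frac{F}{G}\,d\theta\Bigr).$$
The return map of the transversal $\theta=0$ is $r_0\mapsto r_0\exp\bigl(\int_0^\tau F/G\,d\theta\bigr)$. Under (ii) this map is the identity, so every orbit is closed and the origin is a center.

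For necessity there are two things to show.

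First, if (i) fails, then $G(\theta_0)=0$ for some $\theta_0$. The curve $\theta=\theta_0$, that is, the quasi-homogeneous ray $\{\Phi_\lambda(Cs(\theta_0),Sn(\theta_0))\}$, is then invariant: along it the form reduces to $-rF\,d\theta$, which vanishes on $d\theta=0$. More precisely, either the vector field is tangent to the ray or the ray consists of singular points. In both cases the ray is a separatrix tending to the origin, which is incompatible with a period annulus. Equivalently, it contradicts the fact that a center has no singular points near it other than itself.

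Second, if (i) holds but (ii) fails, the return map is multiplication by $e^{c}$ with $c\neq 0$, so the orbits are spirals and we have a focus.

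The step I expect to require the most care is the first part of necessity. One must justify that a zero of $G$ really yields an invariant curve even when $F(\theta_0)=0$ as well. In that case we would argue via the vector field, whose component transverse to the rays is proportional to $G$; the whole ray is then made of orbits or equilibria, and in either case it accumulates at the origin. The other delicate point is checking that the diffeomorphism claim and the exponent conventions in (\ref{eq-fg}) are consistent with the chosen definition of $Cs$ and $Sn$.
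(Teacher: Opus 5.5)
Your argument is correct, and it is more direct than the route the paper indicates. The paper does not actually prove Theorem \ref{thm-1}. It presents it as Theorem \ref{ana} transported through the pull-back Theorem \ref{pb}, with Lyapunov coordinates in place of polar ones, cites \cite{Li} for coprime $P,Q$, and calls the remaining case straightforward.

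You instead rerun the proof of Theorem \ref{ana} on the cylinder $(0,\infty)\times\mathbb{R}/\tau\mathbb{Z}$. Your expression for $\omega_0$ is exactly the paper's later computation \eqref{eq-cal} with $f=Q$, $g=-P$ (cf.\ \eqref{eq23}).

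Bypassing the pull-back is the better choice. When one weight is even, $(x,y)\mapsto(x^{s_2},y^{s_1})$ is not injective on the punctured plane. By Theorem \ref{geom*}, a homogeneous focus may then pull back to a quasi-homogeneous center, so the criterion of Theorem \ref{ana} for $P^*,Q^*$ does not carry over verbatim. Even for odd weights, the pulled-back polar angle is not the Lyapunov angle $\theta$, so a further change of variables would be needed to reach (ii) in the stated form.

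Your observation that $\dot\theta$ is a positive multiple of $G(\theta)$ also settles the non-coprime case uniformly, which the paper leaves to the reader. A zero of $G$ gives either an invariant non-periodic ray ending at the origin or a ray of equilibria.

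Two slips to fix:
\begin{itemize}
\item The quantity conserved by $Cs'=-Sn^{2s_1-1}$, $Sn'=Cs^{2s_2-1}$ is $s_1Cs^{2s_2}+s_2Sn^{2s_1}$, not $s_2Cs^{2s_1}+s_1Sn^{2s_2}$. Your Jacobian formula already uses the correct one.
\item Your $m=d_1+s_2$ is the weight of the one-form, whereas in the statement $m$ is the weight of the system. In the statement's notation the prefactor is $r^{s_1+s_2+m-2}$.
\end{itemize}
Neither affects the argument. Conditions (i)--(ii) do not involve $m$, and $\int_0^\tau F/G\,d\theta$ (the logarithm of the return-map multiplier) does not depend on the normalisation of $Cs,Sn$.
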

When $P$ and $Q$ are coprime, the proof of Theorem \ref{thm-1} was earlier proved in \cite{Li}. If $P$ and $Q$ are not coprime, the proof is straightforward again. To save space, we omit it.

\begin{corollary}\label{coro1}
If the origin $(0,0)$ is a center of a $(s_1,s_2)$ quasi-homogeneous system of degree $m$, then $s_1+s_2+m-1$ is even.
\end{corollary}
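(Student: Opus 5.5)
The plan is to read the parity of the weighted degree off the quasi-homogeneity relation evaluated at $\lambda=-1$, using condition (i) of Theorem \ref{thm-1}. I use the convention of \cite{Li} (the one in which the corollary is phrased): $P$ and $Q$ are $(s_1,s_2)$-quasi-homogeneous of weighted degrees $s_1-1+m$ and $s_2-1+m$. Then
$$G(x,y)=s_1xQ(x,y)-s_2yP(x,y)$$
is a quasi-homogeneous polynomial of weighted degree $D:=m+s_1+s_2-1$, and it suffices to show that $D$ is even. Assume the origin is a center. By Theorem \ref{thm-1}(i), $G$ does not vanish on $\R^2\setminus\{(0,0)\}$, and in particular $G\not\equiv 0$.

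The first step extends the homogeneity relation to negative $\lambda$. Every monomial $x^iy^j$ occurring in $G$ satisfies $s_1i+s_2j=D$. Hence $G(\lambda^{s_1}x,\lambda^{s_2}y)=\lambda^D G(x,y)$ holds for all real $\lambda\neq0$, not only for $\lambda>0$. Taking $\lambda=-1$ gives
$$G\big((-1)^{s_1}x,(-1)^{s_2}y\big)=(-1)^D G(x,y).$$

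The second step is a connectedness argument. Since $\R^2\setminus\{(0,0)\}$ is connected and $G$ is continuous and nonvanishing there, $G$ has constant sign on it. The map $\sigma(x,y)=((-1)^{s_1}x,(-1)^{s_2}y)$ sends the punctured plane to itself. Because $s_1,s_2$ are coprime they are not both even, so $\sigma$ is either $(x,y)\mapsto(-x,-y)$ (both weights odd), or $(x,y)\mapsto(x,-y)$, or $(x,y)\mapsto(-x,y)$ (one weight even). In every case, evaluating at any point $p\neq 0$ gives that $G(\sigma(p))$ and $G(p)$ have the same sign. The displayed identity then forces $(-1)^D=1$, so $D=m+s_1+s_2-1$ is even.

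There is no real obstacle. The only point requiring care is the bookkeeping of the weighted degree: one must match the paper's "degree $m$" with the convention of \cite{Li}, and check that the weighted degree of $G$ is $s_1+\deg_w Q=s_2+\deg_w P=m+s_1+s_2-1$. If the other convention were intended, where $m$ is the weighted degree of the one-form $\omega_0$, the same argument shows instead that $m$ itself is even, since $G$ then has weighted degree $m$.
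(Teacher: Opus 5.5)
Your proof is correct, and it takes a genuinely different route from the paper's. The paper uses the factorization result (Proposition 6 of \cite{FHX}, cf.\ Proposition \ref{quasi}) to write $G(x,y)=G^*(x^{s_2},y^{s_1})$, with $G^*$ homogeneous of degree $m^*=(s_1+s_2+m-1)/(s_1s_2)$. It then argues that $m^*$ must be even: otherwise $G^*(1,\lambda)$ has a real root $\lambda^*\neq 0$, and since one of $s_1,s_2$ is odd, this lifts to a real zero of $G$ on the punctured plane, contradicting Theorem \ref{thm-1}(i).

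You bypass the structure theorem entirely. You evaluate the weighted homogeneity at $\lambda=-1$, and you use that a nonvanishing $G$ has constant sign on the connected, $\sigma$-invariant set $\R^2\setminus\{(0,0)\}$. This gives the parity at once and needs nothing beyond Theorem \ref{thm-1}(i) and the monomial description of quasi-homogeneity. The paper, by contrast, relies on a proposition whose proof it omits.

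What the paper's route buys is a stronger conclusion: $s_1+s_2+m-1=m^*s_1s_2$ with $m^*$ even. It also produces the even-degree form $G^*$ without real linear factors, which is reused in the proof of Corollary \ref{coro2}. Even this refinement can be obtained in your elementary spirit by restricting $G$ to the axes. Nonvanishing there forces monomials $c\,x^{D/s_1}$ and $c'\,y^{D/s_2}$, so $s_1s_2\mid D$. Constant sign on each punctured axis then forces $D/s_1=m^*s_2$ and $D/s_2=m^*s_1$ to be even, hence $m^*$ is even.

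Your treatment of the convention for $m$ is also right. The corollary and its proof use the convention of \cite{Li}, in which $G$ has weighted degree $s_1+s_2+m-1$.
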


\begin{corollary}\label{coro2}
The  $(s_1,s_2)$  quasi-homogeneous polynomial system with a center has either one  pair of infinite equilibria on the Poincar\'e sphere,
or it has no infinite equilibria at all, see Figure \ref{pp}. Furthermore, the system has no infinite equilibria  if and only if it is homogeneous,  $s_1=s_2=1$.
\end{corollary}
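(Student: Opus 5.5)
We will use the Poincaré compactification. If a polynomial field $(P,Q)$ has degree $n$ with top homogeneous parts $P_n,Q_n$, its infinite equilibria on the equator of the Poincaré sphere are the directions $(x,y)$ with $xQ_n(x,y)-yP_n(x,y)=0$. The equator is entirely made of equilibria exactly when this polynomial vanishes identically. The plan is to determine $P_n,Q_n$ from quasi-homogeneity and then use condition (i) of Theorem \ref{thm-1}.

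\emph{Step 1 (top-degree monomials).} Recall $s_1\ge s_2$ are coprime, $P$ has weighted degree $d_1=m-s_2$ and $Q$ has weighted degree $d_2=m-s_1$. A monomial $x^iy^j$ of weighted degree $w$ satisfies $s_1i+s_2j=w$. Hence its ordinary degree is $i+j=(w-(s_1-s_2)i)/s_2$. When $s_1>s_2$ this is strictly decreasing in $i$, so within $P$ (resp.\ $Q$) the largest ordinary degree is attained only by the monomial with smallest $i$.

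\emph{Step 2 (using the center condition).} Since the origin is a center, Theorem \ref{thm-1}(i) says $G=s_1xQ-s_2yP$ does not vanish on $\R^2\setminus\{(0,0)\}$. On the axis $x=0$ we get $G(0,y)=-s_2yP(0,y)\neq0$, so $P(0,y)=c\,y^{j_0}$ with $c\neq0$ and $j_0=d_1/s_2=(m-s_2)/s_2$. Thus $P$ contains a pure $y$-power monomial, i.e.\ one with $i=0$.

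By Step 1, the degree of $P$ is $n:=(m-s_2)/s_2$, and $P_n=c\,y^n$. Every monomial of $Q$ has ordinary degree at most $(m-s_1)/s_2<n$ when $s_1>s_2$, so $Q_n=0$. Therefore
$$xQ_n-yP_n=-c\,y^{n+1},$$
which is not identically zero and vanishes exactly on the line $y=0$. This line gives precisely one pair of antipodal infinite equilibria, at the endpoints of the $x$-axis on the equator.

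\emph{Step 3 (the homogeneous case).} If $s_1=s_2$, coprimality forces $s_1=s_2=1$, so the system is homogeneous of some degree $n$ and $P_n=P$, $Q_n=Q$. Then $xQ-yP=G$, which by Theorem \ref{ana}(i) (or Theorem \ref{thm-1}(i)) has no zero off the origin. Hence there are no infinite equilibria.

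Combining Steps 2 and 3 gives both the dichotomy and the characterization: there are no infinite equilibria if and only if $s_1=s_2=1$. The only delicate point is Step 1–2, namely justifying that the top homogeneous part of $Q$ vanishes and that $P_n$ is a single pure power of $y$. Both follow from the strict monotonicity of the ordinary degree in $i$, together with the existence of the $y^{j_0}$ term forced by condition (i).
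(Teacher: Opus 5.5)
Your proof is correct, and it follows a more direct route than the paper's. Both arguments establish the same two facts, $\deg Q<\deg P$ and that the top homogeneous part of $P$ is a pure power of $y$, but they get there differently.

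\begin{itemize}
\item The paper invokes Lemma~\ref{lem4} to see that the top part $Q_q$ is a monomial. It then rules out $p\le q$ by showing this would force $x\mid G$. Finally it uses the pullback representation $G=G^*(x^{s_2},y^{s_1})$, with $G^*$ free of real linear factors, to conclude that $G_{p+1}=-s_2yP_p$ is a pure power of $y$.
\item You replace all of this with the identity $i+j=(w-(s_1-s_2)i)/s_2$ for monomials of weight $w$, together with the evaluation $G(0,y)=-s_2yP(0,y)\neq 0$. This produces the monomial $y^{d_1/s_2}$ in $P$, identifies it as the unique top-degree term, and gives $\deg Q\le d_2/s_2<d_1/s_2$ in one line.
\end{itemize}

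What each route buys: yours needs neither Lemma~\ref{lem4} nor Proposition~\ref{quasi} (the factorization of $G$), and it yields the degrees of $P$ and $Q$ explicitly. The paper's version is phrased through $G$ and its pullback structure, which fits the rest of Section~2 but is logically heavier.

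Your argument also uses only the difference $d_1-d_2=s_1-s_2$ of the weighted degrees. It is therefore unaffected by the paper switching between $m$ as the weight of the one-form and $m$ as the weight of the system.
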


 \begin{figure}[h]
 \centering
 \includegraphics[scale=0.35]{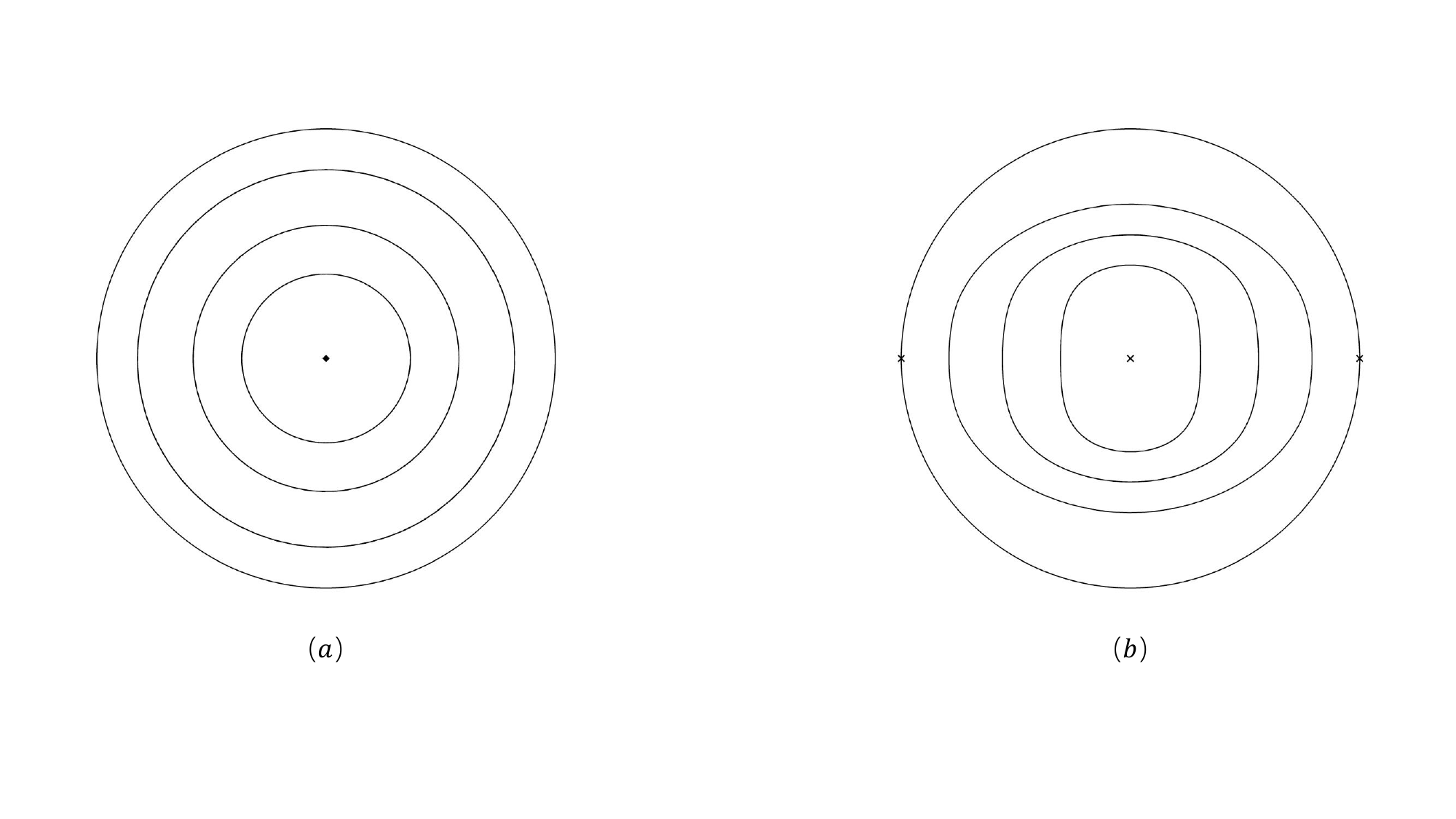}
 \caption{Global topological phase portraits of quasi-homogeneous polynomial systems with a center at the origin. }
 \label{pp}
 \end{figure}

For convenience of the reader, we first provide some basic information about the Lyapunov trigonometric functions, then we give  the proof of Corollary \ref{coro1} and \ref{coro2}.

Following \cite{L}, we recall the Lyapunov $(s_1,s_2)-$trigometric functions. Let $z(t)=Cs(t),w(t)=Sn(t)$ be the solution of equation
\begin{equation}\label{eq-qtri}
\dot{z}=-w^{2s_1-1},\quad \dot{w}=z^{2s_2-1}
\end{equation}
with initial $z(0)=s_1^{-\frac{1}{2s_2}},w(0)=0$, where $s_1,s_2\geq 1$ are the positive integers.
\begin{proposition}\label{pro-tri}
$Cs,Sn$ are the periodic functions with period $\tau$, and
\begin{equation*}
s_1Cs^{2s_2}(t)+s_2Sn^{2s_1}(t)=1.
\end{equation*}
where $\tau=2s_1^{-\frac{1}{2s_2}}s_2^{-\frac{1}{2s_1}}B(\frac{1}{2s_1},\frac{1}{2s_2})$ and $B$ is the $\beta$ function
\begin{equation*}
B(\alpha,\beta)=\int_0^1 (1-x)^{\alpha-1}x^{\beta-1} dx.
\end{equation*}
\end{proposition}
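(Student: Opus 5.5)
The plan is to work directly with the Hamiltonian system (\ref{eq-qtri}). First I will check that the quantity $I(z,w)=s_1z^{2s_2}+s_2w^{2s_1}$ is a first integral. Along solutions,
$$\frac{d}{dt}I = 2s_1s_2 z^{2s_2-1}\dot z + 2s_1s_2 w^{2s_1-1}\dot w = 2s_1s_2\left(-z^{2s_2-1}w^{2s_1-1}+w^{2s_1-1}z^{2s_2-1}\right)=0 .$$
The initial condition $z(0)=s_1^{-1/(2s_2)}$, $w(0)=0$ gives $I=s_1\cdot s_1^{-1}=1$. This yields the identity $s_1Cs^{2s_2}+s_2Sn^{2s_1}=1$.

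Next I will establish periodicity. The level curve $\{I=1\}$ is a compact smooth closed curve: it is a star-shaped algebraic oval, bounded because both exponents are even and positive. The only equilibrium of (\ref{eq-qtri}) is the origin, which does not lie on the curve. Hence the vector field restricted to $\{I=1\}$ does not vanish, the solution exists for all $t$, and it runs around the oval periodically. The minimal period $\tau$ is the time needed for one full turn.

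To compute $\tau$, I will use the symmetries $(z,w)\mapsto(-z,w)$ and $(z,w)\mapsto(z,-w)$. These reverse time and preserve the curve, so the total period is four times the time spent in the first quadrant arc. On that arc $w$ increases from $0$ to $s_2^{-1/(2s_1)}$ with $\dot w=z^{2s_2-1}>0$. Therefore
$$\frac{\tau}{4}=\int_0^{s_2^{-1/(2s_1)}}\frac{dw}{z^{2s_2-1}},\qquad z=\Big(\frac{1-s_2w^{2s_1}}{s_1}\Big)^{1/(2s_2)}.$$

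The substitution $u=s_2w^{2s_1}$ gives $w=(u/s_2)^{1/(2s_1)}$ and $dw=\frac{1}{2s_1}s_2^{-1/(2s_1)}u^{1/(2s_1)-1}du$. It also gives $z^{-(2s_2-1)}=s_1^{1-1/(2s_2)}(1-u)^{1/(2s_2)-1}$. Hence
$$\frac{\tau}{4}=\frac{1}{2s_1}\,s_1^{1-\frac{1}{2s_2}}\,s_2^{-\frac{1}{2s_1}}\int_0^1 u^{\frac1{2s_1}-1}(1-u)^{\frac1{2s_2}-1}du=\frac12 s_1^{-\frac{1}{2s_2}}s_2^{-\frac1{2s_1}}B\Big(\frac1{2s_2},\frac1{2s_1}\Big).$$
By the symmetry $B(\alpha,\beta)=B(\beta,\alpha)$, this quantity equals the one stated.

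The expected main obstacle is bookkeeping: the constants give $\tau=2\,s_1^{-1/(2s_2)}s_2^{-1/(2s_1)}B$. That agrees with the stated formula, so the factor $4$ from the quarter-period argument combines correctly with the factor $\frac12$. I will double-check the exponent arithmetic in $z^{-(2s_2-1)}$, since an error there would shift the power of $s_1$.
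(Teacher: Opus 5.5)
Your proof is correct and follows essentially the same route as the paper: verify the first integral $s_1z^{2s_2}+s_2w^{2s_1}$, deduce periodicity from the closed level curve, and reduce a quarter of the period to a Beta integral. The only difference is cosmetic. You parametrize the quarter arc by $w$ via $\dot w=z^{2s_2-1}$ and substitute $u=s_2w^{2s_1}$, while the paper writes $\tau=-\oint_{H=1}w^{1-2s_1}\,dz$ and substitutes $x=s_1z^{2s_2}$. Your justifications of periodicity and of the factor $4$ (via the time-reversing reflections) are more explicit than the paper's.
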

\begin{proof}
It is easily checked that the equation \eqref{eq-qtri} has the first integral $H(z,w)=s_1z^{2s_2}+s_2w^{2s_1}$, which implies
$$
H(Cs(t),Sn(t))=s_1Cs^{2s_2}(t)+s_1Sn^{2s_2}(t)=H(Cs0,Sn0)=H(s_1^{-\frac{1}{2s_2}},0)=1.
$$
Since $O=(0,0)$ is a local minimum of function $H(z,w)$, the origin $O$ is a center (in fact, a global center) of system \eqref{eq-qtri} and functions $Cs(t),Sn(t)$ are both periodic. Denote the period of function $Cs(t),Sn(t)$ by $\tau$. Let $x=s_1 z^{2s_2}$, we have
\begin{equation*}
\begin{aligned}
\tau&=-\oint_{H(z,w)=1}\frac{1}{w^{2s_1-1}}dz \\
&=4s_2^{-\frac{1}{2s_1}+1}\int_{0}^{s_1^{-\frac{1}{2s_2}}}(1-s_1z^{2s_2})^{\frac{1}{2s_1}-1}dz\\
&=4s_2^{-\frac{1}{2s_1}+1}\int_{0}^{1}(1-x)^{\frac{1}{2s_1}-1}d(s_1^{-\frac{1}{2s_2}}x^\frac{1}{2s_2})\\
&=2s_1^{-\frac{1}{2s_2}}s_2^{-\frac{1}{2s_1}}\int_0^1 (1-x)^{\frac{1}{2s_1}-1}x^{\frac{1}{2s_2}-1} dx\\
&=2s_1^{-\frac{1}{2s_2}}s_2^{-\frac{1}{2s_1}}B(\frac{1}{2s_1},\frac{1}{2s_2}).
\end{aligned}
\end{equation*}
\end{proof}
By proposition \ref{pro-tri}, functions $Cs,Sn$ can be well defined on the circle $\mathbb{R}/{(\tau\mathbb{Z})}\simeq\mathbb{S}^1$. Clearly, we use $\theta\in\mathbb{S}^1$ as the variable of $Cs$ and $Sn$. It is no hard to see $Cs(\theta)=\cos\theta,Sn(\theta)=\sin\theta$ when $s_1=s_2=1$. Furthermore, we can prove Lyapunov $(s_1,s_2)-$trigometric functions has the same properties to trigometric functions.
\begin{proposition}\label{tri-pro}
\begin{itemize}
\item[(i)] $Cs(\frac{\tau}{2}-\theta)=-Cs(\theta)$, $Sn(\frac{\tau}{2}-\theta)=Sn(\theta)$;\\
\item[(ii)] $Cs(\frac{\tau}{2}+\theta)=-Cs(\theta)$, $Sn(\frac{\tau}{2}+\theta)=-Sn(\theta)$;\\
\item[(iii)] $Cs(\tau-\theta)=Cs(\theta)$, $Sn(\tau-\theta)=-Sn(\theta)$.
\end{itemize}
\end{proposition}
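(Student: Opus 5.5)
The plan is to derive all three identities from uniqueness of solutions of the initial value problem for \eqref{eq-qtri}, combined with two discrete symmetries of that system. Since $2s_1-1$ and $2s_2-1$ are odd, the vector field $(-w^{2s_1-1}, z^{2s_2-1})$ is equivariant under $(z,w)\mapsto(-z,-w)$. It is also reversible: under $(z,w,t)\mapsto(z,-w,-t)$ solutions are mapped to solutions. Throughout, write $z_0=s_1^{-\frac{1}{2s_2}}$ for the initial value of $Cs$.

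First I would prove (iii). Put $\tilde z(t)=Cs(-t)$ and $\tilde w(t)=-Sn(-t)$. Using $Cs'=-Sn^{2s_1-1}$ and oddness of the exponent,
$$\tilde z'(t)=-Cs'(-t)=Sn(-t)^{2s_1-1}=(-\tilde w(t))^{2s_1-1}=-\tilde w(t)^{2s_1-1},$$
and similarly $\tilde w'(t)=Sn'(-t)=Cs(-t)^{2s_2-1}=\tilde z(t)^{2s_2-1}$. The initial data are $\tilde z(0)=z_0$ and $\tilde w(0)=0$, so uniqueness gives $Cs(-t)=Cs(t)$ and $Sn(-t)=-Sn(t)$. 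Combined with the $\tau$-periodicity from Proposition \ref{pro-tri}, this yields (iii).

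Next, (ii). By Proposition \ref{pro-tri} the solution $(Cs,Sn)$ runs over the whole closed curve $s_1z^{2s_2}+s_2w^{2s_1}=1$. This curve is a periodic orbit containing no equilibrium, since the origin is the only equilibrium. So there is a first time $c\in(0,\tau)$ with $(Cs(c),Sn(c))=(-z_0,0)$. The functions $t\mapsto(-Cs(t+c),-Sn(t+c))$ solve \eqref{eq-qtri} by the equivariance above and have initial value $(z_0,0)$. Hence, by uniqueness,
$$Cs(t+c)=-Cs(t),\qquad Sn(t+c)=-Sn(t).$$
Applying this twice gives that $2c$ is a period. Since $\tau$ is the minimal period, $2c\in\tau\mathbb Z$, and $0<c<\tau$ forces $c=\tau/2$, which is (ii).

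Finally, (i) follows by combining the two: $Cs(\frac{\tau}{2}-\theta)=-Cs(-\theta)=-Cs(\theta)$ and $Sn(\frac{\tau}{2}-\theta)=-Sn(-\theta)=Sn(\theta)$. The only delicate point is identifying the shift $c$ with exactly $\tau/2$. I handle this via minimality of the period, using that the orbit is a closed curve free of equilibria.
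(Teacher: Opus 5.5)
Your proof is correct and uses the same mechanism as the paper's, namely uniqueness for the initial value problem of \eqref{eq-qtri} combined with its discrete symmetries, but you organize it differently. The paper proves (i) directly, noting that $(-Cs(\frac{\tau}{2}-\theta),Sn(\frac{\tau}{2}-\theta))$ is again a solution, and calls (ii) and (iii) similar; you instead get (iii) from the reversibility $(z,w,t)\mapsto(z,-w,-t)$ and (ii) from the central symmetry, and then deduce (i) by combining them. Your minimal-period argument forcing $c=\tau/2$ also proves the one step the paper only asserts from the symmetry of the level set, namely $Cs(\frac{\tau}{2})=-Cs(0)$ and $Sn(\frac{\tau}{2})=0$.
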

\begin{proof}
These statements can be proved by the existence and uniqueness of the solutions of system \eqref{eq-qtri}. We only prove (i) for saving space. Note that
$$
(z,w)=(z_1(\theta),w_1(\theta))=(-Cs(\frac{\tau}{2}-\theta),Sn(\frac{\tau}{2}-\theta))
$$
and
$$
(z,w)=(z_2(\theta),w_2(\theta))=(Cs(\theta),Sn(\theta))
$$
are both the solutions of equation \eqref{eq-qtri}. We claim
$$
z_1(0)=z_2(0),w_1(0)=w_2(0).
$$
If it is true, then by the existence and uniqueness of the solutions of system \eqref{eq-qtri}, statement (i) is proved. In fact, system \eqref{eq-qtri} is invariant under transformation $(z,w)\mapsto(-z,-w)$ and has a first integral $H(z,w)=s_1 z^{2s_2}+s_2 w^{2s_1}$, thus the level set $H(x,y)=h$ is symmetric about the origin $O$. It implies that
$$
z_1(0)=-Cs(\frac{\tau}{2})=Cs(0)=z_2(0),\quad w_1(0)=Sn(\frac{\tau}{2})=Sn(0)=w_2(0).
$$
\end{proof}

\begin{lemma}\label{lem4}
Suppose $F(x,y)$ is a homogeneous polynomial. If $F(x,y)$ is also a $(s_1,s_2)$-quasi-homogeneous polynomial, where $(s_1,s_2)\neq (1,1)$, then $F$ is a monomial.
\end{lemma}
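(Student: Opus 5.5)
The argument compares the two gradings monomial by monomial. Write
$$F(x,y)=\sum_{i+j=d} c_{ij}\,x^iy^j,$$
where $d$ is the ordinary degree. Suppose also that $F(\lambda^{s_1}x,\lambda^{s_2}y)=\lambda^{D}F(x,y)$ for all $\lambda>0$. Distinct monomials are linearly independent functions, so comparing coefficients gives a condition on each term: every monomial $x^iy^j$ with $c_{ij}\neq 0$ must satisfy $s_1 i+s_2 j=D$. Together with $i+j=d$ this yields a $2\times 2$ linear system for $(i,j)$,
$$\begin{pmatrix}1&1\\ s_1&s_2\end{pmatrix}\begin{pmatrix} i\\ j\end{pmatrix}=\begin{pmatrix} d\\ D\end{pmatrix}.$$
Its determinant is $s_2-s_1$.

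In the case $s_1\neq s_2$ the determinant is nonzero, so $(i,j)$ is uniquely determined. Hence at most one coefficient $c_{ij}$ is nonzero, and $F$ is a monomial (the zero polynomial being trivially excluded, or counted as degenerate).

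It remains to rule out $s_1=s_2$ with $(s_1,s_2)\neq(1,1)$. Under the paper's standing convention that $s_1,s_2$ are mutually prime positive integers, $s_1=s_2$ forces $s_1=s_2=1$, which is excluded by hypothesis. Without that convention, $(s,s)$-quasi-homogeneity is just ordinary homogeneity, so the statement genuinely needs coprimality. The proof should therefore cite this standing assumption explicitly.

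The only step needing care is the first one, deducing the per-monomial condition from quasi-homogeneity. Substituting gives $\sum c_{ij}\lambda^{s_1i+s_2j}x^iy^j=\lambda^D\sum c_{ij}x^iy^j$ for all $\lambda>0$. For each fixed monomial this reads $c_{ij}\lambda^{s_1i+s_2j}=c_{ij}\lambda^{D}$ for all $\lambda>0$, which forces $s_1i+s_2j=D$ whenever $c_{ij}\ne0$. Everything after that is linear algebra.
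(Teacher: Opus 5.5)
Your argument is correct and is essentially the paper's proof. The paper takes two monomials of $F$ with equal ordinary and weighted degrees and derives $(s_1-s_2)(m_{a_1}-m_{a_2})=0$, which is your determinant $s_2-s_1$ in another form; your appeal to coprimality to get $s_1\neq s_2$ from $(s_1,s_2)\neq(1,1)$, and your check that every monomial has weighted degree $D$, make explicit two steps the paper leaves implicit.
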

\begin{proof}
Suppose homogeneous polynomial $F$ has two monomials $a_1x^{n_{a_1}}y^{m_{a_1}}$ and $a_2x^{n_{a_2}}y^{m_{a_2}}$ with the same weight degree, that is,
\begin{equation*}
\begin{aligned}
&n_{a_1}+m_{a_1}=n_{a_2}+m_{a_2},
&s_1n_{a_1}+s_2m_{a_1}=s_1n_{a_2}+s_2m_{a_2}.
\end{aligned}
\end{equation*}
It is easily checked that $(s_1-s_2)(m_{a_1}-m_{a_2})=0$. Since $(s_1,s_2)\neq(1,1)$, we have $n_{a_1}=n_{a_2},m_{a_1}=m_{a_2}$. It implies $F$ is a monomial.
\end{proof}

In the following, we begin to prove Corollary \ref{coro1} and \ref{coro2}.

\begin{proof}[Proof of Corollary \ref{coro1}]
Note that
$$
G(x,y)=s_1xQ(x,y)-s_2yP(x,y)
$$
is a $(s_1,s_2)$-quasi homogeneous polynomial of weight degree $s_1+s_2+m-1$ and has no real linear factors because of Theorem \ref{thm-1}. By Proposition 6 in \cite{FHX}, $s_1+s_2+m-1$ is divisible by $s_1s_2$ and there exists a homogeneous polynomial $G^*(x,y)$ of degree $m^*=\frac{s_1+s_2+m-1}{s_1s_2}$ having no real linear factors and $G(x,y)=G^*(x^{s_2},y^{s_1})$.

We claim $m^*$ is even. If $m^*$ is odd, then $G^*(1,\lambda)=0$ must have a real root $\lambda=\lambda^*\neq 0$. Since $s_1,s_2$ are coprime, either $s_1$ or $s_2$ is odd. Then it is easily checked that $G(x,y)$ has a real zero $(x,y)=(1,(\lambda^*)^{\frac{1}{s_1}})$ if $s_1$ is odd and $G(x,y)$ has a real zero $(x,y)=(\frac{1}{(\lambda^*)^{\frac{1}{s_2}}},1)$ if $s_2$ is odd. It contradicts condition (i) in Theorem \ref{thm-1}. Thus, $s_1+s_2+m-1=m^*s_1s_2$ is even.
\end{proof}

\begin{proof}[Proof of Corollary \ref{coro2}]
We  divide two cases on the relationship between $s_1$ and $s_2$ to prove the conclusion.
\par \textbf{(1)} $s_1=s_2=1$. In this case, $P$ and $Q$ are both homogeneous polynomials of degree $m$. It is known that the infinite equilibria are determined by the real linear factors of homogeneous polynomial $G(x,y)$. And it follows from statement (i) in Theorem \ref{thm-1} that $G(x,y)$ has no real linear factors. This means system \eqref{eq1} has no infinite equilibria.

\par \textbf{(2)} $s_1>s_2$. Suppose $\deg P=p,\ \deg Q=q$. We first show $p>q$ if $s_1>s_2$. We denote the $l$-th homogeneous segment of polynomial $F$ by $F_l$. By Lemma \ref{lem4}, $Q_q$ is a monomial. Then let $Q_q=b_qx^{n_q}y^{m_q}$, where $b_q\neq 0$. Recall $Q_q$ are $(s_1,s_2)$-quasi-homogeneous polynomials of weight degree $m+s_2-1$ respectively by definition of quasi-homogeneous systems. Suppose $p\leq q$.
Note for any monomial $\gamma x^\alpha y^\beta$ in $G(x,y)$,
$$
\alpha+\beta\leq q+1=n_q+m_q+1,\ s_1\alpha+s_2\beta=m+s_1+s_2-1.
$$
Then we have
\begin{equation*}
\begin{aligned}
(s_1-s_2)(\alpha-n_q-1)&\geq s_1(\alpha-n_q-1)+s_2(\beta-m_q)\\
&=(s_1\alpha+s_2\beta)-(s_1(n_q+1)+s_2m_q)\\
&=0,
\end{aligned}
\end{equation*}
that is, $\alpha\geq n_q+1\geq 1$. It implies $G(x,y)$ has the factor $x$, which is a contradiction with statement (i) in Theorem \ref{thm-1}.

It is known that the infinite equilibria are determined by the real linear factors of homogeneous polynomial $-yP_p$ which is equal to $\frac{1}{s_2}G_{p+1}(x,y)$. By Proposition \ref{quasi}, there exists a homogeneous polynomial $G^*(x,y)$ of even degree having no real linear factors and $G(x,y)=G^*(x^{s_2},y^{s_1})$. Hence, it is clear that $G_{p+1}(x,y)$ is a monomial only having variable $y$ when $s_1>s_2$. It follows that system \eqref{eq1} has exactly one pair of infinite equilibria which corresponds to $y=0$ at infinity.
\end{proof}

\section{The Francoise's algorithm for perturbated quasi-homogeneous vector fields}
\label{highOrderMelnikov}
\subsection{Francoise's  algorithm revisited}
Consider perturbated planar Hamiltonian system
\begin{equation}\label{Ham}
dH+\sum_{i=1}^{\infty}\epsilon^i \upsilon_{i}=0,
\end{equation}
where $H$ is an analytic (resp. algebraic) function, $\upsilon_i$ are analytic (resp. algebraic) 1-forms and system $dH=0$ has a center at the origin $O$ and a period annulus $U$.
We say an analytic (resp. algebraic) function $H$ satisfies {\it  Francoise’s condition} (or  called *-property) on $U$ if following two conditions are equivalent for any analytic (resp. algebraic) 1-form $\upsilon_i$ on $U$.\\
\begin{itemize}
\item[(a)] $\oint_{\gamma_h}\upsilon=0$;\\
\item[(b)] there exist analytic (resp. algebraic) functions $a$ and $R$ on $U$ such that
\begin{equation}\label{eqO}
\upsilon=-adH+dR.
\end{equation}
\end{itemize}
In the analytic case, the Francoise's condition always holds (cf. \cite[Corollary 1]{G}). In the algebraic case, Francoise’s condition just holds generically (cf. \cite{Ilyashenko}). An example Francoise’s condition does not hold for the algebraic case is so-called {\it eight-loop Hamiltonian}
\begin{equation}
H(x,y)=\frac{y^2}{2}+\frac{(x^2-1)^2}{4},
\end{equation}
see \cite{JMP, JZ} for details.

Let $\gamma_h$ be an oval of the level set $\{(x,y)|\ H(x,y)=h\}$. Next theorem is proved by \cite{F}, and provides a method to calculate Melnikov function at any order in  the analytic case. It is called {\it Francoise's algorithm}.
\begin{theorem}[Francoise's algorithm]\label{thm-Fran}
Suppose an analytic function $H$ satisfies Francoise's condition on $U$. Then
\begin{itemize}
\item[(1)] $M_1(h)=-\oint_{\gamma_h}\upsilon_{1}$;\\
\item[(2)] if $M_1(h)=\dots =M_{k}(h)\equiv0$($k\geq 1$), then there exist analytic functions $a_0(x,y)\equiv 1,a_1(x,y),\dots, a_{k}(x,y)$ and $R_1(x,y),R_2(x,y)$
$,\dots,R_k(x,y)$ such that
\begin{equation}\label{deq}
\sum_{i=1}^k a_{k-i}\upsilon_{i}=-a_k dH+dR_k,
\end{equation}
and
$$
M_{k+1}(h)=-\oint_{\gamma_h}\sum_{i=1}^{k+1}a_{k+1-i}\upsilon_{i}.
$$
\end{itemize}
\end{theorem}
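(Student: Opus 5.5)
We will prove Theorem \ref{thm-Fran} by expanding the Poincar\'e map in $\epsilon$ and comparing its coefficients with a suitable change of the unperturbed first integral. The plan is to construct, by induction on $k$, a function
\[
F_k=H+\sum_{i=1}^{k}\epsilon^i R_i ,
\]
where the $R_i$ are analytic on $U$. This function should satisfy
\[
\Big(\sum_{j=0}^{k}\epsilon^j a_j\Big)\Big(dH+\sum_{i\ge1}\epsilon^i\upsilon_i\Big)=dF_k+\epsilon^{k+1}\Big(\sum_{i=1}^{k+1}a_{k+1-i}\upsilon_i\Big)+O(\epsilon^{k+2}).
\]
The left side vanishes along leaves of the perturbed foliation. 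Hence the increment of $F_k$ along a leaf, from the section $L$ around once back to $L$, is $-\epsilon^{k+1}\oint_{\gamma_h}\sum_{i}a_{k+1-i}\upsilon_i+O(\epsilon^{k+2})$. Here we use the standard fact that a perturbed orbit stays $O(\epsilon)$-close to $\gamma_h$, so integrating along it instead of along $\gamma_h$ costs only a higher order term.

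First step: the case $k=0$, with $F_0=H$. Along the perturbed trajectory $dH=-\epsilon\upsilon_1+O(\epsilon^2)$. This gives $P(h,\epsilon)-h=-\epsilon\oint_{\gamma_h}\upsilon_1+O(\epsilon^2)$, which is (1).

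Inductive step. Assume $M_1=\dots=M_k\equiv0$ and that $a_1,\dots,a_{k-1}$ and $R_1,\dots,R_{k-1}$ have been built. Define $\Omega_k=\sum_{i=1}^k a_{k-i}\upsilon_i$. Comparing $F_{k-1}(P(h,\epsilon))$ with $h$ at order $\epsilon^k$, and using that $F_{k-1}-H=O(\epsilon)$ is single valued on $L$, we get $M_k(h)=-\oint_{\gamma_h}\Omega_k$. Since $M_k\equiv0$, this integral vanishes for all $h$. Francoise's condition, assumed for $H$ on $U$, then supplies analytic $a_k,R_k$ with $\Omega_k=-a_kdH+dR_k$, which is exactly (\ref{deq}). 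Now expand $(\sum_{j\le k}\epsilon^ja_j)\,\omega_\epsilon$ and collect the coefficient of $\epsilon^k$:
\[
a_kdH+\Omega_k=dR_k .
\]
This is what is needed to pass from $F_{k-1}$ to $F_k$. The coefficient of $\epsilon^{k+1}$ is the residual form $\sum_{i=1}^{k+1}a_{k+1-i}\upsilon_i$, since the term $a_{k+1}dH$ has not yet been added. Redoing the first-return computation with $F_k$ gives $P(h,\epsilon)-h=\epsilon^{k+1}M_{k+1}(h)+O(\epsilon^{k+2})$ with $M_{k+1}$ as stated. Here we use that $F_k(P)-F_k(h)=P-h+O(\epsilon(P-h))$ and that $P-h=O(\epsilon^{k+1})$ by the induction hypothesis.

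The main obstacle is the bookkeeping in the first-return computation. We must check that the multiplier $\sum_j\epsilon^ja_j$ and the correction $\sum_i\epsilon^iR_i$, which are only analytic on $U$ (not necessarily on a neighbourhood of the origin), do not spoil the order estimates. They are evaluated on a compact subannulus containing $\gamma_h$, so the estimates are uniform on compacts. The genuinely nontrivial input is the existence of a single-valued $R_k$ and $a_k$, which is exactly Francoise's condition. That condition is taken as a hypothesis here (and holds in the analytic setting by \cite{G}).
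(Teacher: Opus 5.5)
Your proposal is correct and is the standard argument of Fran\c{c}oise \cite{F}, which the paper cites instead of proving: the multiplier $1+\epsilon a_1+\dots+\epsilon^k a_k$ turns $\omega_\epsilon$ into $d\bigl(H+\epsilon R_1+\dots+\epsilon^k R_k\bigr)+\epsilon^{k+1}\sum_{i=1}^{k+1}a_{k+1-i}\upsilon_i+O(\epsilon^{k+2})$, and integrating this over the perturbed leaf between consecutive hits of $L$ gives $M_{k+1}$. You also treat the endpoint terms correctly, using single-valuedness of the $R_i$ to get $F_k(P)-F_k(h)=(P-h)(1+O(\epsilon))$, which is the one step that needs care.
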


 It is clear there always exist $a_k(x,y)$ and $R_k(x,y)$ that satisfy the equation \eqref{deq} if Hamiltonian $H$ satisfies Francoise's condition. So the main problem is how to solve $a_k,R_k$ from equation \eqref{deq}.

\subsection{Calculation of Melnikov functions in  quasi-homogeneous vector fields}
In general, the partial differential equation \eqref{deq} is difficult to be solved analytically. In this subsection we first introduce generalized polar coordinates to address the challenging problem by using key properties of quasi-homogeneous analytic functions, and obtain the
adapted Francoise's algorithm to calculate Melnikov function at any order for general quasi-homogeneous polynomial vector fields, then we reduce the estimation of the number of zeros for the Melnikov function to a combinatorial counting task, that is to count  the number of elements within a certain set.

Suppose $H$ is a $(s_1,s_2)$-quasi-homogeneous analytic function and $dH=0$ has a center at the origin  $O$. 
Assume that $\Omega=\sum_{j}\Omega_j$, where $\Omega_j$ is $(s_1,s_2)$-quasi-homogeneous analytic 1-form. If $(a_j(x,y),R_j(x,y))$ is a solution of equation $\Omega_j=-a_jdH+dR_j$, then $a=\sum_j a_j,R=\sum_j R_j$ is a solution of equation \eqref{eqO}. Thus, without loss of generality, let
$$
\Omega=f(x,y)dx+g(x,y)dy
$$
is a $(s_1,s_2)$-quasi-homogeneous analytic 1-form of weight degree $l$. Consider the generalized polar coordinates transformation $\Phi(r,\theta)$
\begin{equation*}
x=r^{s_1}Cs(\theta),\quad y=r^{s_2}Sn(\theta),
\end{equation*}
which is biholomorphic from $(0,+\infty)\times\mathbb{S}^1$ to $\mathbb{R}^2\setminus\{0\}$, it is easily calculated that
\begin{equation}\label{eq-cal}
\Phi^*(\Omega)=\Omega\circ\Phi=r^{l-1}(\bar{f}(\theta)dr+r\bar{g}(\theta)d\theta),
\end{equation}
where
\begin{equation}\label{eq-cal2}
\begin{aligned}
&\bar{f}(\theta)=s_1Cs(\theta) f(Cs(\theta),Sn(\theta))+s_2 Sn(\theta) g(Cs(\theta), Sn(\theta)),\\
&\bar{g}(\theta)=-Sn^{2s_1-1}\theta f(Cs(\theta),Sn(\theta))+ Cs^{2s_2-1}\theta g(Cs(\theta), Sn(\theta)).
\end{aligned}
\end{equation}
Then equation \eqref{eqO} can be transformed into
\begin{equation}\label{eq-tri}
r^{l-1}(\bar{f}(\theta)dr+r\bar{g}(\theta)d\theta)=-\bar{a}(r,\theta)d(r^m\bar{H}(\theta))+d(\bar{R}(r,\theta)),
\end{equation}
where $\bar{f},\bar{g}$ are defined by \eqref{eq-cal2}, $\bar{a}(r,\theta)=\Phi^*(a),\bar{R}(r,\theta)=\Phi^*(R)$ and $\bar{H}(\theta)=H(Cs(\theta),Sn(\theta))$.

 Note that $\bar{H}(\theta)\neq 0$ for any $\theta\in \mathbb{S}^1$, otherwise the line $\theta=\theta_0$ is invariant by the flow of system $dH=0$. Without loss of generality, let $\bar{H}(\theta)>0$ for any $\theta$. Next proposition is easily proved directly by calculation.
\begin{proposition}\label{pro-cal}
The equation \eqref{eq-tri} has a solution
$$
\bar{a}(r,\theta)=r^{l-m}\bar{b}(\theta,\theta_0)=-\frac{1}{m}r^{l-m}(\bar{H}(\theta))^{\frac{l-m}{m}}\int_{\theta_0}^\theta\frac{\bar{f}'(s)-l\bar{g}(s)}{(\bar{H}(s))^{\frac{l}{m}}}ds,
$$
$$
\bar{R}(r,\theta)=\left\{
\begin{aligned}
&\frac{1}{l}r^l(\bar{f}(\theta)-(\bar{H}(\theta))^{\frac{l}{m}}\int_{\theta_0}^\theta\frac{\bar{f}'(s)-l\bar{g}(s)}{(\bar{H}(s))^{\frac{l}{m}}}ds,\quad l\neq 0, \\
&\bar{f}(\theta_0)\ln r+\int_{\theta_0}^\theta(\bar{g}(s)-\frac{\bar{f}(s)-\bar{f}(\theta_0)}{m\bar{H}(s)}\bar{H}'(s))ds,\quad l=0.
\end{aligned}
\right.
$$
\end{proposition}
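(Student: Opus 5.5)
We verify Proposition \ref{pro-cal} by direct substitution into \eqref{eq-tri}, comparing the $dr$ and $d\theta$ components separately. Write $\bar a=r^{l-m}\bar b(\theta)$ and $d(r^m\bar H)=mr^{m-1}\bar H\,dr+r^m\bar H'\,d\theta$. For $l\neq 0$ we take $\bar R=r^l\bar\rho(\theta)$, so $d\bar R=lr^{l-1}\bar\rho\,dr+r^l\bar\rho'\,d\theta$. Comparing coefficients turns \eqref{eq-tri} into two ordinary relations in $\theta$:
\begin{equation*}
\bar f=-m\bar b\bar H+l\bar\rho,\qquad \bar g=-\bar b\bar H'+\bar\rho' .
\end{equation*}
We solve the first for $\bar\rho=\frac1l(\bar f+m\bar b\bar H)$ and substitute into the second. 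This gives the linear first order equation
\begin{equation*}
\frac{m}{l}(\bar b\bar H)'-\bar b\bar H'=\bar g-\frac1l\bar f' ,
\end{equation*}
that is,
\begin{equation*}
m\bar H\bar b'+(m-l)\bar H'\bar b=l\bar g-\bar f'.
\end{equation*}
Next we use the integrating factor $\bar H^{(m-l)/m}$. The left-hand side equals $m\bar H^{(l-m)/m}\bigl(\bar H^{(m-l)/m}\bar b\bigr)'\cdot \bar H^{0}$, since
\begin{equation*}
\bigl(\bar H^{(m-l)/m}\bar b\bigr)'=\bar H^{(m-l)/m}\bar b'+\tfrac{m-l}{m}\bar H^{-l/m}\bar H'\bar b .
\end{equation*}
Multiplying the equation by $\bar H^{-l/m}/m$ therefore gives
\begin{equation*}
\bigl(\bar H^{(m-l)/m}\bar b\bigr)'=-\frac1m\,\frac{\bar f'-l\bar g}{\bar H^{l/m}} .
\end{equation*}
Integrating from $\theta_0$ with $\bar b(\theta_0)=0$ yields exactly the stated formula for $\bar b$. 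Substituting into $\bar\rho=\frac1l(\bar f+m\bar b\bar H)$ gives the stated $\bar R$; the formula in the statement is missing a closing parenthesis. The positivity $\bar H>0$, noted before the Proposition, ensures the real powers $\bar H^{l/m}$ are well defined and smooth on $\mathbb S^1$.

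For $l=0$ the ansatz $r^l\bar\rho$ degenerates, so we treat it separately. The same formula for $\bar b$ still applies, with $\bar a=r^{-m}\bar b$ and
\begin{equation*}
\bar b=-\frac1m\bar H^{-1}\int_{\theta_0}^\theta \bar f'\,ds=-\frac{\bar f(\theta)-\bar f(\theta_0)}{m\bar H(\theta)} .
\end{equation*}
Then $-\bar a\,d(r^m\bar H)$ has $dr$-coefficient $\frac{\bar f-\bar f(\theta_0)}{r}$. The remainder $\bar f(\theta_0)\frac{dr}{r}$ of $\Phi^*\Omega=r^{-1}\bar f\,dr+\bar g\,d\theta$ is absorbed by $d(\bar f(\theta_0)\ln r)$. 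Matching the $d\theta$-coefficient gives
\begin{equation*}
\bar R_\theta=\bar g+\bar b\bar H'=\bar g-\frac{\bar f-\bar f(\theta_0)}{m\bar H}\bar H' ,
\end{equation*}
whose integral is the stated expression.

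The computation is routine; the only delicate point is bookkeeping in the integrating-factor step and keeping the signs consistent with the convention $\Omega=-a\,dH+dR$. Note that the Proposition asserts only a local solution on $(0,\infty)\times(\mathbb S^1$ cut at $\theta_0)$. Single-valuedness around $\mathbb S^1$, which corresponds to vanishing of the period integral (i.e.\ $M_k\equiv0$), is not claimed here, and we would not attempt to verify it.
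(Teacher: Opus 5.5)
Your proposal is correct and is the same direct calculation the paper alludes to. You match the $dr$ and $d\theta$ components of \eqref{eq-tri} under the ansatz $\bar a=r^{l-m}\bar b(\theta)$ with $\bar R=r^l\bar\rho(\theta)$ (or $\bar R=\bar f(\theta_0)\ln r+\sigma(\theta)$ when $l=0$), and your remarks on the missing parenthesis and on periodicity not being claimed are accurate. The only slip is the sentence saying the left-hand side equals $m\bar H^{(l-m)/m}(\bar H^{(m-l)/m}\bar b)'$: the correct prefactor is $m\bar H^{l/m}$, which is what your next step (multiplying by $\bar H^{-l/m}/m$) actually uses, so the resulting equation and the formula for $\bar b$ are right.
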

Proposition \ref{pro-cal} implies that the solution of equation \eqref{eqO} always can be found theoretically by generalized polar coordinates. Thus,  using transformation $\Phi$, we transform system \eqref{Ham} to
\begin{equation}\label{eq-call1}
d(r^m\bar{H}(\theta))+\sum_{i=1}^{\infty}\epsilon^i \Phi^*(\upsilon_{i})=0.
\end{equation}
And the any order  Melnikov functions of \eqref{eq-call1} can be calculated  by Theorem \ref{thm-Fran} and Proposition \ref{pro-cal}.

Consider transformation $\Psi:(r,\theta)\mapsto(\frac{r}{(\bar{H}(\theta))^\frac{1}{m}},\theta)$.  Note this transformation is biholomorphic on $(0,+\infty)\times\mathbb{S}^1$. 1-form \eqref{eq-call1} can be changed into
\begin{equation*}
mr^{m-1}dr+\sum_{i=1}^{\infty}\epsilon^i\Psi^* \Phi^*(\upsilon_{i})=0,
\end{equation*}
which has the same leaves on $(0,+\infty)\times\mathbb{S}^1$ with the foliation associated to following 1-form
\begin{equation}\label{eq-call2}
dr+\sum_{i=1}^{\infty}\epsilon^i\tilde{\upsilon}_i=0,
\end{equation}
where $\tilde{\upsilon}_i=\frac{1}{mr^{m-1}}\Psi^* \Phi^*(\upsilon_{i})$ has the form
$$
\tilde{\upsilon}_i=\sum_{j \in \Lambda_i} r^{j}(\tilde{f}_{i,j}(\theta)dr+r\tilde{g}_{i,j}(\theta)d\theta),
$$
where $\Lambda_i$ is a finite index set that relies on $i$. Using Proposition \ref{pro-cal}, we can get the adapted Francoise's algorithm of 1-form \eqref{eq-call2} under the generalized polar coordinates.
\begin{proposition}\label{thm-Fran2}
\begin{itemize}
\item[(1)] $M_1(h)=-\sum_{j \in \Lambda_1}h^{j+1}\int_0^\tau\tilde{g}_{1,j}(\theta)d\theta$;\\
\item[(2)] If $M_1(h)=\dots =M_{k}(h)\equiv0$ ($k\geq 1$), then there exist $\tau-$periodic functions $a_0(r,\theta)\equiv 1,a_1(r,\theta,\theta_1),a_2(r,\theta,\{\theta_1,\theta_2\}),\dots, a_{k}(r,\theta,\{\theta_n\}_{n=1}^k)$ about variables $\theta,\theta_1,\theta_2,\dots,\theta_k$ such that following conditions hold.\\
\begin{itemize}
\item[(2.a)] $\Omega_{i}=\sum_{j=1}^i a_{i-j}\tilde{\upsilon}_{j}$($i=2,\dots,k+1$) has the form
$$
\Omega_{i}=\sum_{j \in \Lambda_i} r^{j}(\alpha_{i,j}(\theta,\{\theta_n\}_{n=1}^{i-1})dr+r\beta_{i,j}(\theta,\{\theta_n\}_{n=1}^{i-1})d\theta),
$$
where index set $\Lambda_i$ is finite that relies on $i$, and $\alpha_{i,j}$ and $\beta_{i,j}$ are $\tau$-periodic about variables $\theta,\theta_1,\theta_2,\dots,\theta_{i-1}$.\\
\item[(2.b)] $a_i(r,\theta,\{\theta_n\}_{n=1}^i)$ ($i=1,2,\dots,k$) has the form
$$
a_1(r,\theta,\theta_1)=-\sum_{j\in\Lambda_1}r^j\int_{\theta_1}^\theta(\tilde{f}_{1,j}'(s)-(j+1)\tilde{g}_{1,j}(s))ds.
$$
$$
a_i(r,\theta,\{\theta_n\}_{n=1}^i)=-\sum_{j \in \Lambda_i} r^{j}\int_{\theta_i}^\theta(\frac{\partial \alpha_{i,j}}{\partial \theta}(s,\{\theta_n\}_{n=1}^{i-1})-(j+1)\beta_{i,j}(s,\{\theta_n\}_{n=1}^{i-1}))ds,\quad i\geq 2.
$$
Furthermore,
$$
M_{k+1}(h)=-\sum_{j \in \Lambda_{k+1}} h^{j+1}\int_0^\tau\beta_{k+1,j}(\theta,\{\theta_n\}_{n=1}^{k})d\theta.
$$
\end{itemize}
\end{itemize}
\end{proposition}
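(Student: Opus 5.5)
Proposition \ref{thm-Fran2} is Theorem \ref{thm-Fran} applied to the one-form \eqref{eq-call2}, with the role of $H$ played by $r$. We then solve each cohomological equation explicitly, as in Proposition \ref{pro-cal}. Two features make this legitimate. First, under $\Phi$ and $\Psi$ the period annulus becomes $(0,+\infty)\times\mathbb{S}^1$. Second, the ovals become the circles $\gamma_h=\{r=h\}\times\mathbb{S}^1$, and the parameter $h$ on the transversal $\theta=0$ is just $r$. Since $r$ is analytic and everything is analytic on this annulus, Francoise's condition holds by \cite[Corollary 1]{G}. So Theorem \ref{thm-Fran} applies directly with $dH$ replaced by $dr$, and the plan is to proceed by induction on $k$.

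For (1), the first Melnikov function is $M_1(h)=-\oint_{\gamma_h}\tilde\upsilon_1$. On $r=h$ we have $dr=0$, so only the $d\theta$ part survives, which gives $-\sum_{j\in\Lambda_1}h^{j+1}\int_0^\tau\tilde g_{1,j}\,d\theta$.

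For the inductive step, the key lemma concerns a single monomial piece
\[
r^j(\alpha(\theta)dr+r\beta(\theta)d\theta),\qquad \int_0^\tau\beta\,d\theta=0 .
\]
We look for $a=r^jA(\theta)$ and $R=r^{j+1}B(\theta)$ with $r^j(\alpha\,dr+r\beta\,d\theta)=-a\,dr+dR$. Comparing coefficients gives $(j+1)B-A=\alpha$ and $B'=\beta$. Hence $B(\theta)=\int_{\theta_1}^\theta\beta$, which is $\tau$-periodic precisely because the period integral vanishes, and $A=(j+1)B-\alpha$. The base point $\theta_1$ is kept as a free parameter, and after adjusting the additive constant in $B$ (which only shifts $A$ by a multiple of $r^j$) $A$ can be written in the stated form $-\int_{\theta_1}^\theta(\alpha'-(j+1)\beta)$. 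This is the case $m=1$, $\bar H\equiv1$ of Proposition \ref{pro-cal}.

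One subtlety is that $M_i\equiv0$ gives $\sum_j h^{j+1}\int\beta_{i,j}=0$ for all $h$, and we need each $\int\beta_{i,j}$ to vanish separately. This holds because distinct exponents $j+1$ give linearly independent functions of $h$, even when the exponents are non-integer.

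Next, assume $a_0,\dots,a_{i-1}$ have been constructed with the stated dependence on $(r,\theta,\theta_1,\dots,\theta_{i-1})$. The form $\Omega_i=\sum_{j=1}^ia_{i-j}\tilde\upsilon_j$ is a finite sum of products of $r$-powers and $\tau$-periodic functions of $\theta$ and the $\theta_n$. Collecting powers of $r$ puts it in the form (2.a). The parameters $\theta_n$ only enter as constants, so the vanishing of $M_i$ and the lemma above give $a_i$ as in (2.b), with the new free base point $\theta_i$, and $a_i$ is periodic in all variables. Finally, $M_{k+1}=-\oint_{\gamma_h}\Omega_{k+1}$ is computed exactly as in (1).

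The main obstacle is periodicity in $\theta$. The lemma requires $\int_0^\tau\beta_{i,j}=0$ for every $j$ and every choice of the $\theta_n$, not merely after summing over $j$. I would settle this by the linear independence of the powers $h^{j+1}$ noted above, treating the $\theta_n$ as fixed parameters throughout the Francoise recursion.
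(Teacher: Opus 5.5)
Your proposal is correct and follows essentially the paper's route: Theorem \ref{thm-Fran} with $r$ in the role of $H$, each cohomological equation solved as in Proposition \ref{pro-cal} with $m=1$, $\bar H\equiv 1$, and periodicity of $a_i$ obtained inductively from the termwise vanishing $\int_0^\tau\beta_{i,j}\,d\theta=0$, which you justify more explicitly than the paper does via independence of the powers $h^{j+1}$. One small repair is needed when $j=-1$: your ansatz $R=r^{j+1}B(\theta)$ then forces $A=-\alpha$, and shifting $B$ by a constant no longer changes $A$, so to reach the stated form $A=-\int_{\theta_i}^{\theta}\alpha'(s)\,ds$ you must instead add $\alpha(\theta_i)\ln r$ to $R$, exactly as in the $l=0$ case of Proposition \ref{pro-cal}.
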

\begin{proof}
Using Theorem \ref{thm-Fran} and Proposition \ref{pro-cal}, statement (1) and statements (2.a),(2.b) can be checked directly by calculation.

We only need to prove $a_{k}(r,\theta,\{\theta_n\}_{n=1}^k)$ is a $\tau-$periodic function about variables $\theta,\theta_1,\theta_2,\dots,\theta_k$ by induction.

Note that  $f_{1,j},g_{1,j}$ are $\tau$-periodic about variable $\theta$. When $k=1$, to prove that
$$
a_1(r,\theta,\theta_1)=-\sum_{j\in\Lambda_1}r^j\int_{\theta_1}^\theta(\tilde{f}_{1,j}'(s)-(j+1)\tilde{g}_{1,j}(s))ds
$$
 is a $\tau$-periodic function about variables $\theta,\theta_1$, it is enough to show that $\int_{0}^\tau(\tilde{f}_{1,j}'(s)-(j+1)\tilde{g}_{1,j}(s))ds=0$ for any $j\in\Lambda_1$.

 In fact,
$$
M_1(h)=-\sum_{j \in \Lambda_1}h^{j+1}\int_0^\tau\tilde{g}_{1,j}(\theta)d\theta\equiv 0,
$$
which leads to that  $\int_0^\tau\tilde{g}_{1,j}(\theta)d\theta=0$ for any $j\in\Lambda_1$. Thus
\begin{equation*}
\begin{aligned}
&\int_0^\tau (\tilde{f}_{1,j}'(s)-(j+1)\tilde{g}_{1,j}(s))ds\\
=&(\tilde{f}_{1,j}(\tau)-\tilde{f}_{1,j}(0))-(j+1)\int_0^\tau \tilde{g}_{1,j}(s)ds\\
=&0.
\end{aligned}
\end{equation*}
Now assume that $a_{k}(r,\theta,\{\theta_n\}_{n=1}^k)$ is a $\tau-$periodic function about variable $\theta,\theta_1,\theta_2,\dots,\theta_k$ if $k\leq l-1$. When $k=l$, by (2.b),
$$
a_l(r,\theta,\{\theta_n\}_{n=1}^l)=-\sum_{j \in \Lambda_l} r^{j}\int_{\theta_l}^\theta(\frac{\partial \alpha_{l,j}}{\partial \theta}(s,\{\theta_n\}_{n=1}^{l-1})-(j+1)\beta_{l,j}(s,\{\theta_n\}_{n=1}^{l-1}))ds.
$$

By inductive hypotheses it follows that $a_l$ is periodic about variables $\theta_1,\dots,\theta_{l-1}$. To prove $a_l$ is a $\tau$-periodic function about variables $\theta_1$, $\cdots, \theta_l$, it is enough to prove
$$
\int_{0}^\tau(\frac{\partial \alpha_{l,j}}{\partial \theta}(s,\{\theta_n\}_{n=1}^{l-1})-(j+1)\beta_{l,j}(s,\{\theta_n\}_{n=1}^{l-1}))ds=0
$$
for any $j\in\Lambda_l$.

In fact,
$$
M_l(h)=-\oint_{r=h}\Omega_l=-\sum_{j \in \Lambda_1}h^{j+1}\int_0^\tau\beta_{l,j}(\theta,\{\theta_n\}_{n=1}^{l-1})d\theta=0
$$
means $\int_0^\tau\beta_{l,j}(\theta,\{\theta_n\}_{n=1}^{l-1})d\theta=0$ for any $j\in\Lambda_l$. Thus
\begin{equation*}
\begin{aligned}
&\int_{0}^\tau(\frac{\partial \alpha_{l,j}}{\partial \theta}(s,\{\theta_n\}_{n=1}^{l-1})-(j+1)\beta_{l,j}(s,\{\theta_n\}_{n=1}^{l-1}))ds\\
=&(\alpha_{l,j}(\tau,\{\theta_n\}_{n=1}^{l-1})-\alpha_{l,j}(0,\{\theta_n\}_{n=1}^{l-1}))-(j+1)\int_0^\tau \beta_{l,j}(s,\{\theta_n\}_{n=1}^{l-1})ds\\
=&0.
\end{aligned}
\end{equation*}
\end{proof}
Note that $M_k(h)=\frac{d^kP}{d\epsilon^k}(h,\epsilon)$, where $P(h,\epsilon)$ is the Poincar\'e map of system \eqref{eq-call2}. $P(h,\epsilon)$ only depends on the perturbated terms $\tilde{\upsilon}_i$, and is not related to the value of $\theta_1,\theta_2,\dots,\theta_{k-1}$.
\begin{proposition}\label{pro-no}
Melnikov functions $M_k$ do not depend on the values of $\theta_i$, i.e. if $M_1(h)=\dots =M_{k}(h)\equiv0$($k\geq 1$), then
$$
\oint_{\{r=h,\ 0\leq\theta\leq\tau\}}\Omega_{k+1}(r,\theta,\{\theta_{1n}\}_{n=1}^k)=\oint_{\{r=h,\ 0\leq\theta\leq\tau\}}\Omega_{k+1}(r,\theta,\{\theta_{2n}\}_{n=1}^k)
$$
for any $\{\theta_{1n}\}_{n=1}^k$ and $\{\theta_{2n}\}_{n=1}^k$.
\end{proposition}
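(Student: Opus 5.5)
The plan is to show directly that changing the base points $\theta_n$ changes each $a_i$ only by adding terms that are multiples of earlier-order data. Those extra terms contribute nothing to the integral over $\{r=h\}$, provided the lower-order Melnikov functions vanish. I will argue by induction on $k$, using the structure from Proposition \ref{thm-Fran2}. Before that, I note the conceptual reason. By the remark preceding the statement, $M_{k+1}$ is the $(k+1)$st $\epsilon$-derivative (up to the factorial normalization) of the Poincar\'e map of \eqref{eq-call2}, and that map is intrinsic to the family $\tilde\upsilon_i$. Francoise's algorithm (Theorem \ref{thm-Fran}) gives $M_{k+1}=-\oint\Omega_{k+1}$ for \emph{any} admissible choice of the $a_i$, $R_i$ solving \eqref{deq}. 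Each choice of $\{\theta_n\}$ produces such an admissible choice. Indeed, by Proposition \ref{pro-cal} the functions $a_i(r,\theta,\{\theta_n\})$ of (2.b), together with the corresponding $R_i$, solve $\Omega_i=-a_i\,dr+dR_i$ for every value of the parameters, and they are single valued on the annulus by the periodicity shown in Proposition \ref{thm-Fran2}. Hence both integrals in the statement equal $-M_{k+1}(h)$.

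To make this self-contained rather than relying only on the Poincar\'e map interpretation, I will verify the independence algebraically. First I observe that changing $\theta_i$ to $\theta_i'$ changes $a_i$ by
\begin{equation*}
\sum_{j\in\Lambda_i} c_j\, r^j,
\end{equation*}
where the $c_j$ depend only on $\theta_1,\dots,\theta_{i-1}$ and not on $\theta$. Thus $a_i$ changes by a function $\phi_i(r)$ of $r$ alone. By induction I then show that, for any two choices of parameters, the differences satisfy $a_i'-a_i=\sum_{l=1}^{i}\phi_l(r)\,a_{i-l}$ for suitable functions $\phi_l(r)$ of $r$ alone. This is the standard non-uniqueness of Francoise's algorithm: if $\Omega_i=-a_i\,dr+dR_i$, then also $\Omega_i=-(a_i+\phi(r))\,dr+d(R_i+\Phi(r))$ with $\Phi'=\phi$. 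Substituting into $\Omega_{k+1}=\sum_j a_{k+1-j}\tilde\upsilon_j$ gives
\begin{equation*}
\Omega_{k+1}'-\Omega_{k+1}=\sum_{l}\phi_l(r)\,\Omega_{k+1-l}.
\end{equation*}
On the circle $r=h$ each $\phi_l(h)$ is a constant. Moreover $\oint_{r=h}\Omega_{k+1-l}=-M_{k+1-l}(h)=0$ for $1\le l\le k$, by the hypothesis $M_1=\dots=M_k\equiv0$. Hence the difference of the integrals vanishes.

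The main obstacle is the bookkeeping in the induction step. The quantity $a_i$ is built by integrating $\partial_\theta\alpha_{i,j}-(j+1)\beta_{i,j}$, and both $\alpha_{i,j}$ and $\beta_{i,j}$ already involve the earlier $a$'s. So I must check that a perturbation of $\Omega_i$ of the form $\sum\phi_l(r)\Omega_{i-l}$ propagates to a perturbation of $a_i$ of the same shape, namely $\sum\phi_l a_{i-l}$ plus a new function of $r$. For this I will use the linearity of the solution formula in Proposition \ref{pro-cal}, together with the fact that the equation $\phi(r)\Omega=-\phi a\,dr+d(\phi R)-\phi' R\,dr$ only adds an $r$-dependent correction absorbed into the next $\phi$. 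I expect the paper itself to simply invoke the Poincar\'e map argument.
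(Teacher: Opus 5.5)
Your first paragraph is correct and is essentially the paper's own justification. The paper gives no separate proof, only the remark just before the statement: $M_k$ is an $\epsilon$-derivative of the Poincar\'e map of \eqref{eq-call2}, and that map does not see the base points $\theta_i$. You add a useful check the paper omits: every choice of base points is admissible. The $a_i$ are single valued, and hence so are the $R_i$, since $\Omega_i+a_i\,dr$ is closed with zero period on $r=h$ once $M_i\equiv0$. One caveat: ``for any admissible choice'' is what Francoise's proof yields, not what the existential wording of Theorem~\ref{thm-Fran} says.

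The ``self-contained'' algebraic verification, however, is wrong as written. The identity you write down, $\phi\,\Omega=-(\phi a+\phi' R)\,dr+d(\phi R)$, shows that the correction $\phi' R$ depends on $\theta$, because $R$ does. So it cannot be absorbed into a function of $r$ alone. Concretely, moving $\theta_1$ changes $a_1$ by $\phi_1(r)=\sum_j c_jr^j$, which is in general not constant. Then $a_2'=a_2+\phi_1a_1+\phi_1'R_1+\phi_2(r)$, so
\[
\Omega_3'-\Omega_3=\phi_1\Omega_2+\phi_2\Omega_1+\phi_1'(r)\,R_1\Omega_1 .
\]
Hence both $a_i'-a_i=\sum_l\phi_l(r)a_{i-l}$ and $\Omega_{k+1}'-\Omega_{k+1}=\sum_l\phi_l(r)\Omega_{k+1-l}$ already fail for $k=2$. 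The extra term happens to integrate to zero there, since $\oint_{r=h}R_1\,dR_1=0$. At $k=3$, though, you meet $\phi_1'(R_1\Omega_2+R_2\Omega_1)+\tfrac12\phi_1''R_1^2\Omega_1+\phi_2'R_1\Omega_1$. Its integral vanishes only through combinations such as $\oint_{r=h}(R_1\,dR_2+R_2\,dR_1)=0$, not termwise from $M_{k+1-l}\equiv0$.

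The correct bookkeeping is multiplicative in the first integral. Set $F_\epsilon=r+\sum_{i\le k}\epsilon^iR_i$ and $\psi(s,\epsilon)=1+\sum_{l\ge1}\epsilon^l\phi_l(s)$. One shows by induction that, for suitable $\phi_l$, $a_i'$ is the $\epsilon^i$-coefficient of $\psi(F_\epsilon,\epsilon)\sum_q\epsilon^qa_q$. At each step, the leftover is a function of $r$ alone, which defines the next $\phi_i$. Consequently $\Omega_{k+1}'-\Omega_{k+1}=\sum_{p=1}^k\psi_p\Omega_{k+1-p}$, where $\psi_p$ is the $\epsilon^p$-coefficient of $\psi(F_\epsilon,\epsilon)$, in general a function of both $r$ and $\theta$. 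Restricted to $r=h$, this difference is the $\epsilon^{k+1}$-coefficient of $\psi(F_\epsilon,\epsilon)\,dF_\epsilon=d\Psi(F_\epsilon,\epsilon)$ with $\partial_s\Psi=\psi$. That is an exact form with a single-valued primitive, so its integral over $r=h$ is zero.
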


In what follows, we discuss how to calculate the Melnikov functions and determine the index set $\Lambda_i$ in Proposition \ref{thm-Fran2} for the general quasi-homogeneous polynomial systems. Suppose system \eqref{eq1} is $(s_1,s_2)$-quasi-homogeneous and has a center at the origin $O$. Through the discussion in Section 2, the equation \eqref{eq1} has an integral factor
$$
\mu(x,y)=\frac{1}{s_1xQ(x,y)-s_2yP(x,y)}
$$
defined on $\mathbb{R}^2\setminus\{O\}$ such that $\frac{\partial (\mu P)}{\partial x}+\frac{\partial (\mu Q)}{\partial y}=0$, i.e. the unperturbated system \eqref{eq1} has an analytic first integral $H$ defined on the period annulus $\mathbb{R}^2\setminus\{O\}$ such that $dH=\mu\omega_0$, and $H$ is an analytic quasi-homogeneous function. Thus,  system \eqref{eq22} can be transformed to the perturbated Hamiltonian system \eqref{eq221} (further, system \eqref{eq-call2}) by the integral factor $\mu$, then we can use Proposition \ref{thm-Fran2} to calculate the Melnikov functions of perturbated quasi-homogeneous system \eqref{eq2}.

Specifically, let
$$
\mathcal{T}_{r}^{(s_1,s_2)}=\{is_1+js_2|i,j\in\mathbb{N},i+j\leq r\},
$$
and
$$
\begin{aligned}
\mathcal{S}_{r,t}^{(s_1,s_2,m)}&=\{i-j(s_1+s_2+m-1)|i\in\mathcal{T}_{j\cdot r}^{(s_1,s_2)}\setminus\mathcal{T}_{j-1}^{(s_1,s_2)},j=1,2,\dots,t\}\\
&=\{is_1+js_2-l(s_1+s_2+m-1)|i,j,l\in\mathbb{N},l\leq i+j\leq l\cdot r,1\leq l\leq t\},
\end{aligned}
$$
where $r$ and $t$ are both positive integers. Note that 1-form $x^iy^jdx$ and $x^iy^jdy$ are $(s_1,s_2)$-quasi-homogeneous 1-form of weight degree $(i+1)s_1+js_2$ and $is_1+(j+1)s_2$ respectively, then we have that 1-form $\omega_i$ in equation \eqref{eq22} has following decomposition
$$
\omega_i=Q_i(x,y)dx-P_i(x,y)dy=\sum_{j\in \mathcal{T}_{n+1}^{(s_1,s_2)}\setminus\{0\}}\omega_{i,j},
$$
where $\omega_{i,j}=Q_{i,j}(x,y)dx-P_{i,j}(x,y)dy$ is a $(s_1,s_2)$-quasi-homogeneous polynomial 1-form of weight degree $j$, that is, $Q_{i,j}$ (resp. $P_{i,j}$) is the $(s_1,s_2)$-quasi-homogeneous segment of weight degree $j-s_1$ (resp. $j-s_2$) of $Q_i$ (resp. $P_i$). The perturbated differential equation \eqref{eq22} can be transformed by the generalized polar coordinates transformation $\Phi$ into
\begin{equation}\label{eq23}
r^{s_1+s_2+m-2}(G(\theta)dr-rF(\theta)d\theta)+\sum_{i=1}^\infty \epsilon^i\sum_{j\in \mathcal{T}_{n+1}^{(s_1,s_2)}\setminus\{0\}}\bar\omega_{i,j}=0,
\end{equation}
where $F,G$ is defined by equality \eqref{eq-fg}, and
$$
\bar{\omega}_{i,j}=\Phi^*(\omega_{i,j})=r^{j-1}(G_{i,j}(\theta)dr-rF_{i,j}(\theta)d\theta),
$$
where
\begin{equation}\label{eq-FGij}
\begin{aligned}
&F_{i,j}(\theta)=Cs^{2s_2-1}\theta P_{i,j}(Cs(\theta), Sn(\theta))+Sn^{2s_1-1}\theta Q_{i,j}(Cs(\theta),Sn(\theta)),\\
&G_{i,j}(\theta)=s_1 Cs(\theta) Q_{i,j}(Cs(\theta),Sn(\theta))-s_2 Sn(\theta) P_{i,j}(Cs(\theta),Sn(\theta)).
\end{aligned}
\end{equation}
By Theorem \ref{thm-1}, $G(\theta)\neq 0$ for any $\theta\in\mathbb{S}^1$ and $r>0$.

Let $H(\theta)=e^{\int_{0}^\theta\frac{F(s)}{G(s)}ds}$. Both sides of equation \eqref{eq23} are multiplied by $\frac{1}{r^{s_1+s_2+m-2}G(\theta)H(\theta)}$ and we consider transformation $r\mapsto rH(\theta),\theta\mapsto\theta$. Then equation \eqref{eq23} is transformed to a similar form with \eqref{eq-call2}
\begin{equation}\label{eq24}
dr+\sum_{i=1}^\infty \epsilon^i\tilde{\omega}_i=0,
\end{equation}
where
$$
\tilde{\omega}_i=\sum_{j\in \mathcal{S}_{n+1,1}^{(s_1,s_2,m)}}\tilde{\omega}_{i,j}=\sum_{j\in \mathcal{S}_{n+1,1}^{(s_1,s_2,m)}}r^{j}(g_{i,j}(\theta)dr-rf_{i,j}(\theta)d\theta),
$$
and
\begin{equation}\label{eq-fgij}
\begin{aligned}
&f_{i,j}(\theta)=(H(\theta))^{j}\frac{F_{i,j+s_1+s_2+m-1}(\theta)G(\theta)-G_{i,j+s_1+s_2+m-1}(\theta)F(\theta)}{(G(\theta))^2},\\
&g_{i,j}(\theta)=(H(\theta))^{j}\frac{G_{i,j+s_1+s_2+m-1}(\theta)}{G(\theta)}.
\end{aligned}
\end{equation}

\begin{theorem}[Francoise's algorithm for perturbated quasi-homogeneous polynomial vector fields]
\label{thm-Fran3}
In generalized polar coordinates, the foliation $\mathcal{F}(\omega_\epsilon)$ takes the form \eqref{eq24}, and its  Melnikov functions are determined as follows
\begin{itemize}
\item[(i)] If $M_1(h)\not\equiv 0$, then
$$
\begin{aligned}
M_{1}(h)=&\sum_{j \in \mathcal{S}_{n+1,1}^{(s_1,s_2,m)}} h^{j+1}m_{1,j}\\
=&\sum_{j \in \mathcal{S}_{n+1,1}^{(s_1,s_2,m)}}h^{j+1}\int_0^\tau f_{1,j}(\theta)d\theta.
\end{aligned}
$$\\
\item[(ii)] If $M_1(h)=\dots =M_{k}(h)\equiv0$($k\geq 1$), then there exist $\tau-$periodic functions $a_0(r,\theta)\equiv 1,a_1(r,\theta,\theta_1),a_2(r,\theta,\{\theta_1,\theta_2\}),\dots, a_{k}(r,\theta,\{\theta_n\}_{n=1}^k)$ about variables $\theta,\theta_1,\theta_2,\dots,\theta_k$ such that following conditions hold.
\begin{itemize}
\item[(ii.a)] $\Omega_{l}=\sum_{j=1}^l a_{l-j}\tilde{\omega}_{j}$($l=2,\dots,k+1$) has the form
$$
\Omega_{l}=\sum_{j \in \mathcal{S}_{n+1,l}^{(s_1,s_2,m)}} r^{j}(A_{l,j}(\theta,\{\theta_n\}_{n=1}^{l-1})dr-rB_{l,j}(\theta,\{\theta_n\}_{n=1}^{l-1})d\theta),
$$
where $A_{j}$ and $B_{j}$ are $\tau$-periodic about variables $\theta,\theta_1,\theta_2,\dots,\theta_{i-1}$.\\
\item[(ii.b)] $a_i(r,\theta,\{\theta_n\}_{n=1}^i)$ ($i=1,2,\dots,k$) has the form
$$
\begin{aligned}
a_1(r,\theta,\theta_1)&=-\sum_{j\in\mathcal{S}_{n+1,1}^{(s_1,s_2,m)}}r^{j}b_{1,j}(\theta,\theta_1)\\
&=-\sum_{j\in\mathcal{S}_{n+1,1}^{(s_1,s_2,m)}}r^{j}(g_{1,j}(\theta)+(j+1)\int_{\theta_1}^\theta f_{1,j}(s)ds),
\end{aligned}
$$
$$
\begin{aligned}
a_i(r,\theta,\{\theta_n\}_{n=1}^i)=&-\sum_{j \in \mathcal{S}_{n+1,i}^{(s_1,s_2,m)}} r^{j}b_{i,j}(\theta,\{\theta_n\}_{n=1}^i)\\
=&-\sum_{j \in \mathcal{S}_{n+1,i}^{(s_1,s_2,m)}} r^{j}(A_{i,j}(\theta,\{\theta_n\}_{n=1}^{i-1})+(j+1)\int_{\theta_i}^\theta B_{i,j}(s,\{\theta_n\}_{n=1}^{i-1})ds),\quad i\geq 2.
\end{aligned}
$$
\end{itemize}
Furthermore,
$$
\begin{aligned}
M_{k+1}(h)=&\sum_{j \in \mathcal{S}_{n+1,k+1}^{(s_1,s_2,m)}} h^{j+1}m_{k+1,j}\\
=&\sum_{j \in \mathcal{S}_{n+1,k+1}^{(s_1,s_2,m)}} h^{j+1}\int_0^\tau B_{k+1,j}(\theta,\{\theta_n\}_{n=1}^{k})d\theta.
\end{aligned}
$$
\end{itemize}
\end{theorem}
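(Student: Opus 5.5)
The plan is to specialize Proposition \ref{thm-Fran2} to the one-form \eqref{eq24}. The work then splits into two parts. First, I rewrite its formulas in the sign convention used here, where the $d\theta$-coefficient is written $-rB$ rather than $r\beta$. Second, I track the exponents of $r$ by induction on $l$, which shows that the index sets $\Lambda_l$ can be taken inside $\mathcal{S}_{n+1,l}^{(s_1,s_2,m)}$.

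\textbf{Step 1: the form of \eqref{eq24}.} After multiplying \eqref{eq23} by $1/(r^{s_1+s_2+m-2}G(\theta)H(\theta))$ and substituting $r\mapsto rH(\theta)$, the unperturbed part becomes $dr$. Each $\bar\omega_{i,j'}$, with $j'\in\mathcal{T}_{n+1}^{(s_1,s_2)}\setminus\{0\}$, becomes $r^{j}(g_{i,j}dr-rf_{i,j}d\theta)$ with $j=j'-(s_1+s_2+m-1)$ and $f_{i,j},g_{i,j}$ as in \eqref{eq-fgij}. This is a direct computation using $dH/H=(F/G)\,d\theta$. Since $\mathcal{T}_{n+1}\setminus\mathcal{T}_0=\mathcal{T}_{n+1}\setminus\{0\}$, the set of such exponents $j$ is exactly $\mathcal{S}^{(s_1,s_2,m)}_{n+1,1}$. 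All coefficients are $\tau$-periodic, because $H(\theta)$ is periodic by condition (ii) of Theorem \ref{thm-1}.

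\textbf{Step 2: the first Melnikov function and $a_1$.} Proposition \ref{thm-Fran2}(1), applied with $\tilde f_{1,j}=g_{1,j}$ and $\tilde g_{1,j}=-f_{1,j}$, gives (i) directly. For (ii.b) with $i=1$, the formula there gives
\[
-r^j\int_{\theta_1}^{\theta}\bigl(g_{1,j}'+(j+1)f_{1,j}\bigr)\,ds,
\]
which equals $-r^j\bigl(g_{1,j}(\theta)+(j+1)\int_{\theta_1}^\theta f_{1,j}\bigr)$ up to an additive constant $r^jg_{1,j}(\theta_1)$. Such a term is harmless, since $a$ in \eqref{eqO} is determined only up to adding a function of $H=r$. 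I would verify directly that $d\bigl(r^{j+1}b/(j+1)\bigr)$ type identities make this $a_1$, together with a suitable $R_1$, solve $\tilde\omega_1=-a_1dr+dR_1$. Periodicity of $a_1$ in $\theta$ and $\theta_1$ follows from $M_1\equiv0$, exactly as in the proof of Proposition \ref{thm-Fran2}. The same substitution $\alpha=A$, $\beta=-B$ gives the general formula for $a_i$ and the formula for $M_{k+1}$, with the sign absorbed.

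\textbf{Step 3 (main step): the exponent bookkeeping.} I argue by induction on $l$. Suppose $a_{i}$ has $r$-exponents in $\mathcal{S}_{n+1,i}$ for $i<l$. Then $\Omega_l=\sum_{j=1}^{l}a_{l-j}\tilde\omega_j$ has exponents in sums
\[
\mathcal{S}_{n+1,l-j}+\mathcal{S}_{n+1,j}\qquad(\text{and }\mathcal{S}_{n+1,1}\text{ when } l-j=0,\ j\ge1,\text{ with } a_0=1).
\]
I must check that $\mathcal{S}_{n+1,p}+\mathcal{S}_{n+1,q}\subseteq\mathcal{S}_{n+1,p+q}$, and that the chain of $\tilde\omega_j$ terms lands in $\mathcal{S}_{n+1,l}$. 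In the parametrized description an element of $\mathcal{S}_{n+1,p}$ is $is_1+js_2-l_1(s_1+s_2+m-1)$ with $l_1\le i+j\le l_1(n+1)$ and $1\le l_1\le p$. Adding two such elements gives indices $(i+i',\,j+j',\,l_1+l_2)$, which satisfy the same inequalities with $l_1+l_2\le p+q$. Hence the sum lies in $\mathcal{S}_{n+1,p+q}$, and since $j\le l$ it lies in $\mathcal{S}_{n+1,l}$. The tricky point is that $a_i$ carries exponent $j$ from $\Omega_i$ with no shift, which is consistent with $\mathcal{S}_{n+1,i}$. Also, $\mathcal{S}_{n+1,t}$ is increasing in $t$, which absorbs the lower-order terms. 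The $\tau$-periodicity of $A_{l,j},B_{l,j}$ in all variables follows from the periodicity of the $a_i$, which is established inductively as in Proposition \ref{thm-Fran2}. The formula for $M_{k+1}$ then follows by integrating $\Omega_{k+1}$ over $r=h$, which gives $\sum_j h^{j+1}\int_0^\tau B_{k+1,j}\,d\theta$.

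I expect the main obstacle to be carefully justifying the two descriptions of $\mathcal{S}$, in particular the constraint $i\notin\mathcal{T}_{j-1}$ in the first one. I would work only with the parametrized second description, which is closed under addition as shown above.
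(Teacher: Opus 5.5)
Your proposal is correct and is essentially the paper's own argument. As in the paper, the form and periodicity of the $a_i$ come from Proposition \ref{thm-Fran2} (via $\tilde f=g$, $\tilde g=-f$, with the discrepancy in $a_1$ absorbed as an additive function of $r$), and the index sets are controlled by induction on the $r$-exponents of $a_{k+1-i}\tilde\omega_i$. Working only with the parametrized description of $\mathcal{S}_{n+1,l}^{(s_1,s_2,m)}$ is the right call, since that set is closed under the additions the induction needs, whereas the $\mathcal{T}_{j(n+1)}^{(s_1,s_2)}\setminus\mathcal{T}_{j-1}^{(s_1,s_2)}$ form used in the paper's computation of $\Gamma_i$ is not (for $(s_1,s_2)=(3,1)$ the weights $1,2$ of $dy$ and $y\,dy$ add up to $3=s_1\in\mathcal{T}_1^{(3,1)}$).
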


\begin{proof}
The conclusion (i) is clear. We just prove statement (ii). The form and periodicity of $a_i$s have been proved in Proposition \ref{thm-Fran2}. We only need to check the elements in the index set $\Lambda_i$ in Proposition \ref{thm-Fran2} by induction. Suppose statement (ii) holds. If $M_1(h)=\dots=M_k(h)\equiv 0$, then when $1\leq i\leq k$, we have
\begin{equation}\label{eq-aomega}
\begin{aligned}
a_{k+1-i}\tilde{\omega}_{i}&=-\sum_{j \in \mathcal{S}_{n+1,k+1-i}^{(s_1,s_2,m)}} r^{j}b_{i,j}\cdot\sum_{l\in \mathcal{S}_{n+1,1}^{(s_1,s_2,m)}}r^{l}(g_{i,l}(\theta)dr-rf_{i,l}(\theta)d\theta)\\
&=-\sum_{(j,l) \in \mathcal{S}_{n+1,k+1-i}^{(s_1,s_2,m)}\times\mathcal{S}_{n+1,1}^{(s_1,s_2,m)}}r^{j+l}(b_{i,j}g_{i,l}(\theta)dr-rb_{i,j}f_{i,l}(\theta)d\theta)\\
&=-\sum_{s \in {\Gamma_i}}r^{s}(\sum_{j+l=s}b_{i,j}g_{i,l}(\theta)dr-r\sum_{j+l=s}b_{i,j}f_{i,l}(\theta)d\theta),
\end{aligned}
\end{equation}
where
$$
\begin{aligned}
\Gamma_i&=\{p-q(s_1+s_2+m-1)|p\in\mathcal{T}_{q\cdot (n+1)}^{(s_1,s_2)}\setminus\mathcal{T}_{q-1}^{(s_1,s_2)},q=2,\dots,k+2-i\}\\
&=\bigcup_{q=2}^{k+2-i}\{p-q(s_1+s_2+m-1)|p\in\mathcal{T}_{q\cdot (n+1)}^{(s_1,s_2)}\setminus\mathcal{T}_{q-1}^{(s_1,s_2)}\}.\\
\end{aligned}
$$
Note that  $\Omega_{k+1}=\sum_{i=1}^{k}a_{k+1-i}\tilde{\omega}_{i}+\omega_{k+1}$. Hence, the index set $\Lambda_{k+1}$ has the form
$$
\begin{aligned}
\Lambda_{k+1}&=(\bigcup_{i=1}^{k}\Gamma_i)\bigcup (\mathcal{S}_{n+1,1}^{(s_1,s_2,m)}) \\
&=(\bigcup_{i=1}^{k}\bigcup_{q=2}^{k+2-i}\{p-q(s_1+s_2+m-1)|p\in\mathcal{T}_{q\cdot (n+1)}^{(s_1,s_2)}\setminus\mathcal{T}_{q-1}^{(s_1,s_2)}\})\bigcup (\mathcal{S}_{n+1,1}^{(s_1,s_2,m)})\\
&=\bigcup_{q=1}^{k+1}\{p-q(s_1+s_2+m-1)|p\in\mathcal{T}_{q\cdot (n+1)}^{(s_1,s_2)}\setminus\mathcal{T}_{q-1}^{(s_1,s_2)}\}\\
&=\mathcal{S}_{n+1,k+1}^{(s_1,s_2,m)}.
\end{aligned}
$$
\end{proof}

\section{The cyclicity on period annulus of quasi-homogeneous polynomial vector fields}\label{AHproblem}
In this section, we consider the perturbated system \eqref{eq2} and estimate its zeros of the first non-vanishing Melnikov function of order $k$. By Theorem \ref{thm-Fran3}, it is clear that the number of positive zeros of $M_k(h)$ is less than the cardinal of the set $\mathcal{S}_{n+1,k+1}^{(s_1,s_2,m)}$ (denoted by $\sharp\mathcal{S}_{n+1,k+1}^{(s_1,s_2,m)}$). In fact, we can reduce the number of zeros by almost half. Let
$$
\tilde{\mathcal{S}}_{n+1,k}^{(s_1,s_2,m)}=\left\{
\begin{aligned}
&\mathcal{S}_{n+1,k}^{(s_1,s_2,m)}\cap(2\mathbb{Z}),\quad \text{ if $s_1,s_2$ are both odd},\\
&\mathcal{S}_{n+1,k}^{(s_1,s_2,m)}-(2\mathbb{Z}),\quad \text{ if $s_1+s_2$ is odd,}
\end{aligned}
\right.
$$
where $\mathbb{Z}$ is the set of integers.
Our main result is as follow.

\begin{theorem}\label{mainthm}
Assume that system \eqref{eq1} is $(s_1,s_2)$-quasi-homogeneous with weight degree $m$, and $P_i(x,y)$ and $Q_i(x,y)$ in system \eqref{eq2} are polynomials of degree not greater than $n$. If system \eqref{eq1} has a center at the origin $O$, then the first non-vanishing $k$th order Melnikov function $M_k(h)$ of system \eqref{eq2} has at most $\sharp\tilde{\mathcal{S}}_{n+1,k}^{(s_1,s_2,m)}-1$ zeros on $(0,+\infty)$, where
$$
\tilde{\mathcal{S}}_{n+1,k}^{(s_1,s_2,m)}=\left\{
\begin{aligned}
&\mathcal{S}_{n+1,k}^{(s_1,s_2,m)}\cap(2\mathbb{Z}),\quad \text{ if $s_1,s_2$ are both odd},\\
&\mathcal{S}_{n+1,k}^{(s_1,s_2,m)}-(2\mathbb{Z}),\quad \text{ if $s_1+s_2$ is odd,}
\end{aligned}
\right.
$$
and
$$
\mathcal{S}_{n+1,k}^{(s_1,s_2,m)}=\{is_1+js_2-l(s_1+s_2+m-1)|i,j,l\in\mathbb{N},l\leq i+j\leq l\cdot (n+1),1\leq l\leq k\}.
$$
\end{theorem}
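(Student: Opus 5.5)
By Theorem \ref{thm-Fran3}, the first non-vanishing Melnikov function has the form
$$M_k(h)=\sum_{j\in\mathcal{S}_{n+1,k}^{(s_1,s_2,m)}} m_{k,j}\,h^{j+1}.$$
This is a generalized polynomial in $h>0$ with real exponents $j+1$, at most $\sharp\mathcal{S}_{n+1,k}^{(s_1,s_2,m)}$ of them. By the generalized Descartes rule of signs (or induction via division by the lowest power and Rolle's theorem), such a sum with $N$ nonzero terms has at most $N-1$ positive zeros counted with multiplicity. So the whole task is to show that every coefficient $m_{k,j}$ with $j\notin\tilde{\mathcal{S}}_{n+1,k}^{(s_1,s_2,m)}$ vanishes. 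Then at most $\sharp\tilde{\mathcal{S}}_{n+1,k}^{(s_1,s_2,m)}$ monomials survive, and the bound $\sharp\tilde{\mathcal{S}}_{n+1,k}^{(s_1,s_2,m)}-1$ follows.

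The vanishing comes from the half-period symmetry of Proposition \ref{tri-pro}(ii): $Cs(\theta+\tau/2)=-Cs(\theta)$ and $Sn(\theta+\tau/2)=-Sn(\theta)$. Take a $(s_1,s_2)$-quasi-homogeneous polynomial of weighted degree $d$ made of monomials $x^ay^b$ with $as_1+bs_2=d$. Evaluated at $(Cs,Sn)$, its monomials pick up the sign $(-1)^{a+b}$ under the shift.
\begin{itemize}
\item If $s_1,s_2$ are both odd, then $a+b\equiv d \pmod 2$, so every such polynomial has parity $(-1)^d$ under the shift.
\item If $s_1+s_2$ is odd (say $s_1$ even, $s_2$ odd), then the shift $\theta\mapsto\tau-\theta$ of Proposition \ref{tri-pro}(iii) is the appropriate tool, giving $x\mapsto x$, $y\mapsto -y$, a reflection.
\end{itemize}
I would define a \emph{parity type} for each term $r^j(A\,dr-rB\,d\theta)$ and show that it is determined by $j \pmod 2$.

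For $F$, $G$ and $F_{i,j}$, $G_{i,j}$ in \eqref{eq-FGij}, I would check directly that each function has a definite parity under the relevant involution. By Corollary \ref{coro1}, $s_1+s_2+m-1$ is even, so $G$ (weight degree $s_1+s_2+m-1$) is invariant, while $F$ has the parity that makes $F/G\,d\theta$ invariant. In the reflection case I would use that $\int F/G=0$ together with this parity, so that $H(\theta)=e^{\int_0^\theta F/G}$ is invariant. Then $f_{i,j}$ and $g_{i,j}$ in \eqref{eq-fgij} inherit the parity $(-1)^{j}$ (twisted appropriately in the reflection case), since shifting the index by the even number $s_1+s_2+m-1$ preserves parity.

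Next I would propagate this by induction through the recursion of Theorem \ref{thm-Fran3}. Suppose the coefficients $A_{l,j}$, $B_{l,j}$ have parity $(-1)^j$, jointly in all variables $\theta,\theta_1,\dots,\theta_{l-1}$ under the simultaneous involution. Then the $b_{l,j}$ defined by (ii.b) have the same parity; the integral $\int_{\theta_l}^{\theta}$ behaves correctly because the transformation is applied to its endpoints as well. Products multiply parities and exponents add, so $\Omega_{l+1}$ keeps the rule. Finally, $m_{k,j}=\int_0^\tau B_{k,j}\,d\theta$ is zero whenever $B_{k,j}$ is odd in $\theta$. Proposition \ref{pro-no} lets me evaluate at a symmetric choice of the $\theta_i$ (for instance all $\theta_i=0$, fixed by the reflection) so that oddness in $\theta$ alone suffices. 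In the shift case the integral over a full period of a function anti-invariant under $\theta\mapsto\theta+\tau/2$ vanishes automatically.

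The main obstacle is the parity bookkeeping in the case $s_1+s_2$ odd. I would need to confirm that the surviving exponents are exactly the odd ones, which matches the set $\mathcal{S}-2\mathbb{Z}$, and that the integration constants in $a_i$ do not break the symmetry. I would handle the constants by choosing the base points $\theta_i$ as fixed points of the involution, again justified by Proposition \ref{pro-no}.
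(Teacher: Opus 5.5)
Your proposal is correct and follows essentially the paper's route: the form of $M_k$ from Theorem~\ref{thm-Fran3} plus Descartes' rule reduces everything to the vanishing Lemma~\ref{lem-sectionAH-1}, which comes from the involution parities of $f_{i,j},g_{i,j}$ (Proposition~\ref{pro-ind}), propagated jointly in $(\theta,\theta_1,\dots,\theta_{k})$ through the recursion (Lemma~\ref{mlemma}), and closed with Proposition~\ref{pro-no}. One small correction: the shift $\theta\mapsto\theta+\tau/2$ has no fixed points, so there the vanishing is not ``automatic'' in $\theta$ alone, and you need $m_{k,j}=(-1)^j\int_0^\tau B_{k,j}(\theta,\{\theta_n-\tau/2\}_{n=1}^{k-1})\,d\theta=(-1)^j m_{k,j}$ by Proposition~\ref{pro-no} (a one-line form of the paper's final computation); your fixed-base-point shortcut is fine in the reflection cases, provided you take $\theta_n=\tau/4$ rather than $0$ when $s_2$ is even.
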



\begin{proof}
In fact, Theorem \ref{thm-Fran3} shows that the first non-vanishing Melnikov function of order $k$ has the form
$$
M_k(h)=\sum_{j \in \mathcal{S}_{n+1,k}^{(s_1,s_2,m)}} h^{j+1}m_{k,j}.
$$
By Descartes Theorem (cf. \cite{BZ}), $M_k(h)$ has at most
$$
\sharp\mathcal{S}_{n+1,k}^{(s_1,s_2,m)}\setminus \{j|m_{k,j}= 0\}-1
$$
positive zeros. We claim

\begin{lemma}\label{lem-sectionAH-1}
$m_{k,j}=0$ if one of following conditions holds.
\begin{itemize}
\item[(a)] $s_1,s_2$ and $j$ are all odd;
\item[(b)] $s_1+s_2$ is odd and $j$ is even.
\end{itemize}
\end{lemma}

By Lemma \ref{lem-sectionAH-1} and Corollary \ref{coro1}, we have
$$
\mathcal{S}_{n+1,k}^{(s_1,s_2,m)}\setminus \{j|m_{k,j}= 0\}\subseteq\tilde{\mathcal{S}}_{n+1,k}^{(s_1,s_2,m)},
$$
which implies that the number of zeros of $M_k$ can be bounded by $\sharp\tilde{\mathcal{S}}_{n+1,k}^{(s_1,s_2,m)}-1$.
\end{proof}

The remaining question is to prove Lemma \ref{lem-sectionAH-1}. We first discuss the properties of function $f_{i,j}(\theta),g_{i,j}(\theta)$ and $H(\theta)$ defined in  \eqref{eq24} and $A_{k+1,j},B_{k+1,j},b_{k,j}$ defined in Theorem \ref{thm-Fran3}, then finish the proof of Lemma \ref{lem-sectionAH-1}.
\begin{proposition}\label{pro-ind}
(I). If $s_1,s_2$ are both odd, then
 $$
\quad f_{i,j}(\frac{\tau}{2}+\theta)=(-1)^jf_{i,j}(\theta),\quad g_{i,j}(\frac{\tau}{2}+\theta)=(-1)^jg_{i,j}(\theta).
 $$
 (II). If $s_1$ is odd and $s_2$ is even, then
 $$
\quad f_{i,j}(\frac{\tau}{2}-\theta)=(-1)^{j+1}f_{i,j}(\theta),\quad g_{i,j}(\frac{\tau}{2}-\theta)=(-1)^jg_{i,j}(\theta).
 $$
 (III). If $s_1$ is even and $s_2$ is odd, then
 $$
f_{i,j}(\tau-\theta)=(-1)^{j+1}f_{i,j}(\theta),\quad g_{i,j}(\tau-\theta)=(-1)^jg_{i,j}(\theta).
 $$
\end{proposition}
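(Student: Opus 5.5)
The plan is to push the involutions of Proposition \ref{tri-pro} through every ingredient of the definitions \eqref{eq-fgij}. The basic tool is a parity rule for quasi-homogeneous polynomials. Let $N=s_1+s_2+m-1$, which is even by Corollary \ref{coro1}, and write $J=j+N$, so that $f_{i,j},g_{i,j}$ are built from $F_{i,J},G_{i,J}$ and $J\equiv j \pmod 2$.

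First I would prove the following observation. Let $\sigma$ be one of the three involutions of $\mathbb{S}^1$: $\theta\mapsto \frac{\tau}{2}+\theta$ in case (I), $\theta\mapsto\frac{\tau}{2}-\theta$ in case (II), and $\theta\mapsto \tau-\theta$ in case (III). By Proposition \ref{tri-pro}, $\sigma$ acts on $(Cs,Sn)$ respectively by $(-Cs,-Sn)$, $(-Cs,Sn)$ and $(Cs,-Sn)$. A monomial $x^ay^b$ of weight $w=as_1+bs_2$ is then multiplied respectively by $(-1)^{a+b}$, $(-1)^a$ and $(-1)^b$. In each case this sign equals $(-1)^w$: in case (I) because $s_1,s_2$ are odd, in case (II) because $s_2$ is even and $s_1$ is odd, and in case (III) symmetrically. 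Hence any $(s_1,s_2)$-quasi-homogeneous polynomial $R$ of weight degree $w$ satisfies $R(Cs(\sigma\theta),Sn(\sigma\theta))=(-1)^wR(Cs\theta,Sn\theta)$.

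Next I would apply this rule to \eqref{eq-FGij}, recalling that $Q_{i,J}$ has weight $J-s_1$ and $P_{i,J}$ has weight $J-s_2$. This gives the parities of $G_{i,J}$ and $F_{i,J}$ by direct bookkeeping of the extra factors $Cs$, $Sn$, $Cs^{2s_2-1}$ and $Sn^{2s_1-1}$.
\begin{itemize}
\item[(I)] Both $G_{i,J}$ and $F_{i,J}$ acquire the factor $(-1)^J$.
\item[(II), (III)] $G_{i,J}$ acquires $(-1)^J$, while $F_{i,J}$ acquires $(-1)^{J+1}$. In case (II), for instance, the term $Cs^{2s_2-1}P_{i,J}$ picks up $-(-1)^{J-s_2}=(-1)^{J+1}$, and the term $Sn^{2s_1-1}Q_{i,J}$ picks up $(-1)^{J-s_1}=(-1)^{J+1}$.
\end{itemize}
The unperturbed pair $F,G$ is the special case of weight $N$ coming from \eqref{eq23}. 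Since $N$ is even, $F$ and $G$ are both $\sigma$-invariant in case (I). In cases (II) and (III), $G$ is $\sigma$-invariant and $F$ is $\sigma$-anti-invariant.

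The step I expect to be the real obstacle is $H(\sigma\theta)=H(\theta)$, where $H(\theta)=\exp\int_0^\theta F/G$. This is the only place where the center condition, Theorem \ref{thm-1}(ii), i.e. $\int_0^\tau F/G=0$, enters.
\begin{itemize}
\item[(I)] $F/G$ is $\frac{\tau}{2}$-periodic, so $\int_0^{\tau/2}F/G=\frac12\int_0^\tau F/G=0$. This yields $\int_0^{\theta+\tau/2}F/G=\int_0^\theta F/G$.
\item[(II)] Substituting $s=\frac{\tau}{2}-u$ and using that $F/G$ is anti-invariant gives $\int_0^{\tau/2-\theta}F/G=-\int_\theta^{\tau/2}F/G$. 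This equals $\int_0^\theta F/G$ provided $\int_0^{\tau/2}F/G=0$. That vanishing holds because $F/G$ is odd with respect to the reflection $\theta\mapsto\frac{\tau}{2}-\theta$ of $[0,\tau/2]$.
\item[(III)] The same argument applies with $\tau$ in place of $\tau/2$; here $\int_0^\tau F/G=0$ is exactly the center condition.
\end{itemize}
Finally I would substitute these parities into \eqref{eq-fgij}. Since $H(\sigma\theta)^j=H(\theta)^j$ and $G^2$ is invariant, $g_{i,j}=H^jG_{i,J}/G$ acquires $(-1)^J=(-1)^j$ in all three cases. The numerator $F_{i,J}G-G_{i,J}F$ of $f_{i,j}$ acquires $(-1)^j$ in case (I) and $(-1)^{j+1}$ in cases (II) and (III), which gives exactly the three statements.
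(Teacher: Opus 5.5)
Your proof is correct and follows the paper's own route. You read off the parities of $F$, $G$, $F_{i,J}$, $G_{i,J}$ from Proposition \ref{tri-pro} and quasi-homogeneity, with $N=s_1+s_2+m-1$ even by Corollary \ref{coro1}. You then show $H(\sigma\theta)=H(\theta)$ and substitute into \eqref{eq-fgij}; your single rule $R(Cs(\sigma\theta),Sn(\sigma\theta))=(-1)^wR(Cs\theta,Sn\theta)$ also covers cases (II) and (III), which the paper only calls similar, and there the reflection symmetry alone forces the needed integral of $F/G$ to vanish.
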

\begin{proof}
We only prove statement (I) since the proofs of conclusions (II) and (III) are similar. Suppose $s_1,s_2$ are both odd, then $m$ is odd too because of Corollary \ref{coro1}. By Proposition \ref{tri-pro}, we have
\begin{equation*}
\begin{aligned}
G(\theta+\frac{\tau}{2})&=s_1 (-Cs(\theta)) Q(-Cs(\theta),-Sn(\theta))-s_2 (-Sn(\theta)) P(-Cs(\theta),-Sn(\theta)),\\
&=(-1)^{s_2+m}s_1 Cs(\theta) Q(Cs(\theta),Sn(\theta))-(-1)^{s_1+m}s_2 Sn(\theta) P(Cs(\theta),Sn(\theta))\\
&=G(\theta).
\end{aligned}
\end{equation*}
Similarly, we can prove
$$
F(\theta+\frac{\tau}{2})=F(\theta),\quad F_{i,j}(\theta+\frac{\tau}{2})=(-1)^jF_{i,j}(\theta),\quad G_{i,j}(\theta+\frac{\tau}{2})=(-1)^jG_{i,j}(\theta)
$$
via  \eqref{eq-FGij}.
Then we have
\begin{equation*}
\begin{aligned}
\int_{-\frac{\tau}{2}}^{\frac{\tau}{2}}\frac{F(s)}{G(s)}ds&=(\int_{-\frac{\tau}{2}}^0+\int_0^\frac{\tau}{2})\frac{F(s)}{G(s)}ds,\\
&=\int_{-\frac{\tau}{2}}^0\frac{F(s)}{G(s)}ds+\int_{-\frac{\tau}{2}}^0\frac{F(s+\frac{\tau}{2})}{G(s+\frac{\tau}{2})}ds,\\
&=2\int_{-\frac{\tau}{2}}^0\frac{F(s)}{G(s)}ds.
\end{aligned}
\end{equation*}
By Theorem \ref{thm-1}, we have
$$
\int_{-\frac{\tau}{2}}^0\frac{F(s)}{G(s)}ds=\frac{1}{2}\int_{-\frac{\tau}{2}}^{\frac{\tau}{2}}\frac{F(s)}{G(s)}ds=0.
$$
Hence,
\begin{equation*}
\begin{aligned}
H(\theta+\frac{\tau}{2})&=e^{\int_{0}^{\theta+\frac{\tau}{2}}\frac{F(s)}{G(s)}ds}
=e^{\int_{-\frac{\tau}{2}}^{\theta}\frac{F(s+\frac{\tau}{2})}{G(s+\frac{\tau}{2})}ds}\\
&=e^{(\int_{-\frac{\tau}{2}}^0+\int_0^{\theta})\frac{F(s)}{G(s)}ds}
=e^{\int_0^{\theta}\frac{F(s)}{G(s)}ds}=H(\theta).
\end{aligned}
\end{equation*}
Using a similar approach, it follows directly from  \eqref{eq-fgij} that
 $$
 f_{i,j}(\frac{\tau}{2}+\theta)=(-1)^jf_{i,j}(\theta),\quad g_{i,j}(\frac{\tau}{2}+\theta)=(-1)^jg_{i,j}(\theta).
 $$
\end{proof}
It is enough to calculate the Melnikov functions of system \eqref{eq24} by Proposition \ref{pro-cal} and Theorem \ref{thm-Fran2}. Let $\Omega_k=\sum_{i=1}^k \bar{a}_{k-i}(r,\theta,\theta_{k-i})\tilde{\omega}_{i}$, where $\bar{a}$ is calculated by Francoise's algorithm and Proposition \ref{pro-cal} for system \eqref{eq24}. Firstly, we show:
\begin{lemma}\label{mlemma}
Functions $A_{k+1,j},B_{k+1,j}$ and $b_{k,j}$ are given in Theorem \ref{thm-Fran3}, then the following statements hold.
\begin{itemize}
\item[(1)] If $s_1,s_2$ are both odd, then
$$
A_{k+1,j}(\theta+\frac{\tau}{2},\{\theta_n\}_{n=1}^{k})=(-1)^jA_{k+1,j}(\theta,\{\theta_n-\frac{\tau}{2}\}_{n=1}^{k}),
$$
$$
B_{k+1,j}(\theta+\frac{\tau}{2},\{\theta_n\}_{n=1}^{k})=(-1)^jB_{k+1,j}(\theta,\{\theta_n-\frac{\tau}{2}\}_{n=1}^{k}),
$$
$$
b_{k,j}(\theta+\frac{\tau}{2},\{\theta_n\}_{n=1}^{k})=(-1)^jb_{k,j}(\theta,\{\theta_n-\frac{\tau}{2}\}_{n=1}^{k}).
$$
\item[(2)] If $s_1$ is odd and $s_2$ is even, then
$$
A_{k+1,j}(\frac{\tau}{2}-\theta,\{\theta_n\}_{n=1}^{k})=(-1)^jA_{k+1,j}(\theta,\{\frac{\tau}{2}-\theta_n\}_{n=1}^{k}),
$$
$$
B_{k+1,j}(\frac{\tau}{2}-\theta,\{\theta_n\}_{i=1}^{k})=(-1)^{j+1}B_{k+1,j}(\theta,\{\frac{\tau}{2}-\theta_n\}_{n=1}^{k}),
$$
$$
b_{k,j}(\frac{\tau}{2}-\theta,\{\theta_n\}_{n=1}^{k})=(-1)^jb_{k,j}(\theta,\{\frac{\tau}{2}-\theta_n\}_{n=1}^{k}).
$$
\item[(3)] If $s_1$ is even and $s_2$ is odd, then
$$
A_{k+1,j}(\tau-\theta,\{\theta_n\}_{n=1}^{k})=(-1)^jA_{k+1,j}(\theta,\{\tau-\theta_n\}_{n=1}^{k}),
$$
$$
B_{k+1,j}(\tau-\theta,\{\theta_n\}_{n=1}^{k})=(-1)^{j+1}B_{k+1,j}(\theta,\{\tau-\theta_n\}_{n=1}^{k}),
$$
$$
b_{k,j}(\tau-\theta,\{\theta_n\}_{n=1}^{k})=(-1)^jb_{k,j}(\theta,\{\tau-\theta_n\}_{n=1}^{k}).
$$
\end{itemize}
\end{lemma}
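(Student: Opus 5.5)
We argue by induction on $k$, treating the three parity cases in parallel. Write $\sigma$ for the relevant involution of $\mathbb{S}^1$: $\sigma(\theta)=\theta+\frac{\tau}{2}$ in case (1), $\sigma(\theta)=\frac{\tau}{2}-\theta$ in case (2), and $\sigma(\theta)=\tau-\theta$ in case (3). Write $\epsilon_\sigma=+1$ in case (1) and $\epsilon_\sigma=-1$ in cases (2), (3); this is the sign with which $d\theta$ transforms under $\sigma$. The statement then says that the one-form $\Omega_{k+1}$ and the functions $a_k$ are equivariant, degree by degree in $r$, under the map $(r,\theta,\{\theta_n\})\mapsto (r,\sigma\theta,\{\sigma\theta_n\})$, up to the sign $(-1)^j$ on $r^j$. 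Concretely, the coefficient of $r^j\,dr$ picks up $(-1)^j$, and the coefficient of $r^{j+1}d\theta$ picks up $(-1)^j\epsilon_\sigma$. In case (1) the arguments $\theta_n-\frac{\tau}{2}$ and $\theta_n+\frac{\tau}{2}$ coincide by $\tau$-periodicity in $\theta_n$, which Theorem \ref{thm-Fran3} guarantees.

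The base case is Proposition \ref{pro-ind}. It gives exactly these rules for $g_{i,j}$, the $dr$-coefficient, and for $f_{i,j}$, the $d\theta$-coefficient; in cases (2) and (3) the extra factor $-1$ on $f$ is the $\epsilon_\sigma$. Using (ii.b), I would first check $b_{1,j}$. We have $b_{1,j}(\theta,\theta_1)=g_{1,j}(\theta)+(j+1)\int_{\theta_1}^{\theta}f_{1,j}(s)\,ds$. Substitute $\theta\to\sigma\theta$, $\theta_1\to\sigma\theta_1$, and change variables $s=\sigma u$, so that $ds=\epsilon_\sigma du$. The integral becomes $\epsilon_\sigma\int_{\theta_1}^{\theta}f_{1,j}(\sigma u)\,du=\epsilon_\sigma\cdot(-1)^j\epsilon_\sigma\int_{\theta_1}^{\theta}f_{1,j}(u)\,du$. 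Since $\epsilon_\sigma^2=1$, this yields $b_{1,j}(\sigma\theta,\sigma\theta_1)=(-1)^j b_{1,j}(\theta,\theta_1)$, which is the claimed rule for $b$ with no extra sign.

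For the inductive step, assume the rules for $b_{i,j}$ with $i\le k$. By (\ref{eq-aomega}), $\Omega_{k+1}=\tilde\omega_{k+1}+\sum_{i=1}^k a_{k+1-i}\tilde\omega_i$. The coefficient of $r^{s}dr$ is $-\sum_{j+l=s}b_{k+1-i,j}\,g_{i,l}$, and the coefficient of $r^{s+1}d\theta$ is $\sum_{j+l=s}b_{k+1-i,j}\,f_{i,l}$. The signs $(-1)^j(-1)^l=(-1)^s$ multiply correctly, and the $f$ factor carries the single $\epsilon_\sigma$. This gives the rules for $A_{k+1,s}$ and for $B_{k+1,s}$, the latter with the extra $-1$ in cases (2) and (3). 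Note that $b_{k+1-i,j}$ depends only on $\theta_1,\dots,\theta_{k+1-i}$, so all of these variables are transformed by $\sigma$ simultaneously. Finally, $b_{k+1,j}=A_{k+1,j}+(j+1)\int_{\theta_{k+1}}^{\theta}B_{k+1,j}(s,\cdot)\,ds$. The same change of variables $s=\sigma u$ combines the $\epsilon_\sigma$ from $ds$ with the $\epsilon_\sigma$ in the $B$ rule, and this gives the rule for $b_{k+1,j}$. That closes the induction. The base $\Omega_1=\tilde\omega_1$ is handled by the same rules.

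The main obstacle is bookkeeping rather than any conceptual point. First, the inhomogeneous $dr$-term $A$ appears in $b$, and it must carry the same sign as the integral term; this is the cancellation $\epsilon_\sigma^2=1$. Second, in case (1) the paper's phrasing uses $\theta_n-\frac{\tau}{2}$ while the substitution naturally produces $\theta_n+\frac{\tau}{2}$. This discrepancy is resolved using the $\tau$-periodicity of $a_i$ in each $\theta_n$ from Proposition \ref{thm-Fran2}, which itself depended on the vanishing of the earlier Melnikov functions.
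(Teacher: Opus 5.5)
Your proof is correct and follows the paper's route: induction on $k$ with Proposition \ref{pro-ind} as the base, the rules for $A_{k+1,j},B_{k+1,j}$ read off from the product formula \eqref{eq-aomega}, and the rule for $b_{k,j}$ obtained from the substitution $s=\sigma u$ in the integral. Your uniform treatment via $\sigma$ and $\epsilon_\sigma$ also covers the two cases the paper leaves as ``similar'', your indexing $b_{k+1-i,j}\,g_{i,l}$ is the correct form of what the paper writes, and the periodicity in $\theta_n$ is not actually needed in case (1): the paper's identity is exactly your equivariance $b_{k,j}(\sigma\theta,\{\sigma\phi_n\})=(-1)^j b_{k,j}(\theta,\{\phi_n\})$ with $\phi_n=\theta_n-\frac{\tau}{2}$.
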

\begin{proof}
To save space, we only prove statement (1) by induction since the proofs of statement (2) and (3) are similar. When $k=1$, by Theorem \ref{thm-Fran3}, we know that $a_1$ has the form
$$
\begin{aligned}
a_1(r,\theta,\theta_1)&=-\sum_{j\in\mathcal{S}_{n+1,1}^{(s_1,s_2,m)}}r^{j}b_{1,j}(\theta,\theta_1)\\
&=-\sum_{j\in\mathcal{S}_{n+1,1}^{(s_1,s_2,m)}}r^{j}(g_{1,j}(\theta)+(j+1)\int_{\theta_1}^\theta f_{1,j}(s)ds).
\end{aligned}
$$
By Francoise's algorithm,
\begin{equation*}
\begin{aligned}
\Omega_{2}&=\bar{a}_1(r,\theta,\theta_1)\tilde{\omega}_1+\tilde{\omega}_2\\
&=-\sum_{i\in\mathcal{S}_{n+1,1}^{(s_1,s_2,m)}}r^{i}b_{1,i}(\theta,\theta_1)\cdot\sum_{j\in \mathcal{S}_{n+1,1}^{(s_1,s_2,m)}}r^{j}(g_{1,j}(\theta)dr-rf_{1,j}(\theta)d\theta)+\\
&\sum_{j\in \mathcal{S}_{n+1,1}^{(s_1,s_2,m)}}r^{j}(g_{2,j}(\theta)dr-rf_{2,j}(\theta)d\theta)\\
&=\sum_{j\in\mathcal{S}_{n+1,2}^{(s_1,s_2,m)}}r^{j}((\sum_{i_1+i_2=j}b_{1,i_1}g_{1,i_2}+g_{2,j})dr-r(\sum_{i_1+i_2=j}b_{1,i_1}f_{1,i_2}+f_{2,j})d\theta).
\end{aligned}
\end{equation*}
It follows that
$$
A_{2,j}=\sum_{i_1+i_2=j}b_{1,i_1}(\theta,\theta_1)g_{1,i_2}(\theta)+g_{2,j}(\theta),\quad B_{2,j}=\sum_{i_1+i_2=j}b_{1,i_1}(\theta,\theta_1)f_{1,i_2}(\theta)+f_{2,j}(\theta).
$$
By Proposition \ref{pro-ind}, it is clear that
\begin{equation}\label{e1}
\begin{aligned}
b_{1,j}(\theta+\frac{\tau}{2},\theta_1)&=g_{1,j}(\theta+\frac{\tau}{2})+(j+1)\int_{\theta_1}^{\theta+\frac{\tau}{2}} f_{1,j}(s)ds\\
&=(-1)^jg_{1,j}(\theta)+(j+1)\int_{\theta_1-\frac{\tau}{2}}^\theta f_{1,j}(s+\frac{\tau}{2})d(s+\frac{\tau}{2})\\
&=(-1)^jb_{1,j}(\theta,\theta_1-\frac{\tau}{2}).
\end{aligned}
\end{equation}
Thus, by Proposition \ref{pro-ind} and  \eqref{e1}, we have
$$
\begin{aligned}
A_{2,j}(\theta+\frac{\tau}{2},\theta_1)&=\sum_{i_1+i_2=j}b_{1,i_1}(\theta+\frac{\tau}{2},\theta_1)g_{1,i_2}(\theta+\frac{\tau}{2})+g_{2,j}(\theta+\frac{\tau}{2})\\
&=(-1)^j(\sum_{i_1+i_2=j}b_{1,i_1}(\theta,\theta_1-\frac{\tau}{2})g_{1,i_2}(\theta)+g_{2,j}(\theta))\\
&=(-1)^jA_{2,j}(\theta,\theta_1-\frac{\tau}{2}),
\end{aligned}
$$
and by a similar approach, $B_{2,j}(\theta+\frac{\tau}{2},\theta_1)=(-1)^jB_{2,j}(\theta,\theta_1-\frac{\tau}{2})$.

Assume that the conclusion is true as $k\leq l-1$. When $k=l$, it follows from induction that
\begin{equation}\label{e2}
\begin{aligned}
b_{l,j}(\theta+\frac{\tau}{2},\{\theta_n\}_{n=1}^{l})&=A_{l,j}(\theta+\frac{\tau}{2},\{\theta_n\}_{n=1}^{l-1})+(j+1)\int_{\theta_l}^{\theta+\frac{\tau}{2}} B_{l,j}(s,\{\theta_n\}_{n=1}^{l-1})ds\\
&=(-1)^jA_{l,j}(\theta,\{\theta_n-\frac{\tau}{2}\}_{n=1}^{l-1})+(j+1)\int_{\theta_l-\frac{\tau}{2}}^{\theta} B_{l,j}(s+\frac{\tau}{2},\{\theta_n\}_{n=1}^{l-1})d(s+\frac{\tau}{2})\\
&=(-1)^j(A_{l,j}(\theta,\{\theta_n-\frac{\tau}{2}\}_{n=1}^{l-1})+(j+1)\int_{\theta_l-\frac{\tau}{2}}^{\theta} B_{l,j}(s,\{\theta_n\}_{n=1}^{l-1})ds)\\
&=(-1)^jb_{l,j}(\theta,\{\theta_n-\frac{\tau}{2}\}_{n=1}^{l}).
\end{aligned}
\end{equation}
From \eqref{eq-aomega}, we have
$$
\begin{aligned}
\Omega_{l+1}&=\sum_{j=1}^{l} a_{l+1-j}\tilde{\omega}_{j}+\tilde{\omega}_{l+1}\\
&=\sum_{j\in\mathcal{S}_{n+1,2}^{(s_1,s_2,m)}}r^{j}((\sum_{i=1}^l\sum_{i_1+i_2=j}b_{i,i_1}g_{i,i_2}+g_{l+1,j})dr-r(\sum_{i=1}^l\sum_{i_1+i_2=j}b_{i,i_1}f_{i,i_2}+f_{l+1,j})d\theta),
\end{aligned}
$$
that is to say,
$$
A_{l+1,j}=\sum_{i=1}^l\sum_{i_1+i_2=j}b_{i,i_1}g_{i,i_2}+g_{l+1,j},\quad B_{l+1,j}=\sum_{i=1}^l\sum_{i_1+i_2=j}b_{i,i_1}f_{i,i_2}+f_{l+1,j},
$$
By Proposition \ref{pro-ind}, equality \eqref{e2} and inductive hypothesis, it follows that
$$
\begin{aligned}
A_{l+1,j}(\theta+\frac{\tau}{2},\{\theta_n\}_{n=1}^{l})&=\sum_{i=1}^l\sum_{i_1+i_2=j}b_{i,i_1}(\theta+\frac{\tau}{2},\{\theta_n\}_{n=1}^{l})g_{i,i_2}(\theta+\frac{\tau}{2})+g_{l+1,j}(\theta+\frac{\tau}{2})\\
&=(-1)^j\sum_{i=1}^l\sum_{i_1+i_2=j}b_{i,i_1}(\theta,\{\theta_n-\frac{\tau}{2}\}_{n=1}^{l})g_{i,i_2}(\theta)+g_{l+1,j}(\theta)\\
&=(-1)^jA_{l+1,j}(\theta,\{\theta_n-\frac{\tau}{2}\}_{n=1}^{l}),
\end{aligned}
$$
and by a similar approach, we obtain
$$
B_{l+1,j}(\theta+\frac{\tau}{2},\{\theta_n\}_{n=1}^{l})=(-1)^jB_{l+1,j}(\theta,\{\theta_n-\frac{\tau}{2}\}_{n=1}^{l}).
$$
\end{proof}

\begin{proof}[Proof of Lemma \ref{lem-sectionAH-1}]
It suffices to verify  $m_{k,j}=0$ if $s_1,s_2$ and $j$ are all odd, the remaining cases follow analogously.
When $k=1$, by Proposition \ref{pro-ind}, we have
$$
\begin{aligned}
m_{1,j}&=(\int_0^\frac{\tau}{2}+\int_\frac{\tau}{2}^\tau) f_{1,j}(\theta)d\theta\\
&=\int_0^{\frac{\tau}{2}}f_{1,j}(\theta)d\theta+\int_0^{\frac{\tau}{2}}f_{1,j}(\theta+\frac{\tau}{2})d(\theta+\frac{\tau}{2})\\
&=(1+(-1)^j)\int_0^{\frac{\tau}{2}}f_{1,j}(\theta)d\theta=0,
\end{aligned}
$$
if $j$ is odd.

When $k\geq 2$,  the value of $\theta_n$ has no effort on the calculation of Melnikov function $M_k(h)$ (Proposition \ref{pro-no}). Hence,
\begin{equation}\label{eq-mkj}
m_{k,j}=\int_0^\tau B_{k,j}(\theta,\{\theta_n\}_{n=1}^{k-1})d\theta=\int_0^\tau B_{k,j}(\theta,\{\theta_n-\frac{\tau}{2}\}_{n=1}^{k-1})d\theta,\quad k\geq 2.
\end{equation}
On the one hand, by Proposition \ref{mlemma}, we have
\begin{equation*}
\begin{aligned}
\int_0^\tau B_{k,j}(\theta,\{\theta_n\}_{n=1}^{k-1})d\theta&=(\int_0^\frac{\tau}{2}+\int_\frac{\tau}{2}^\tau) B_{k,j}(\theta,\{\theta_n\}_{n=1}^{k-1})d\theta\\
=&\int_0^\frac{\tau}{2}B_{k,j}(\theta,\{\theta_n\}_{n=1}^{k-1})d\theta+(-1)^{j}\int_0^\frac{\tau}{2} B_{k,j}(\theta,\{\theta_n-\frac{\tau}{2}\}_{n=1}^{k-1})d\theta.
\end{aligned}
\end{equation*}
On the other hand, note that  $B_{k,j}$ is $\tau-$periodic about variables $\theta_1,\dots,\theta_n$ (Theorem \ref{thm-Fran3}). By Proposition \ref{mlemma}, we have
\begin{equation*}
\begin{aligned}
&\int_0^\tau B_{k,j}(\theta,\{\theta_n-\frac{\tau}{2}\}_{n=1}^{k-1})d\theta\\
=&(\int_0^\frac{\tau}{2}+\int_\frac{\tau}{2}^\tau) B_{k,j}(\theta,\{\theta_n-\frac{\tau}{2}\}_{n=1}^{k-1})d\theta\\
=&\int_0^\frac{\tau}{2}B_{k,j}(\theta,\{\theta_n-\frac{\tau}{2}\}_{n=1}^{k-1})d\theta+(-1)^{j}\int_0^\frac{\tau}{2} B_{k,j}(\theta,\{\theta_n-\tau\}_{n=1}^{k-1})d\theta\\
=&\int_0^\frac{\tau}{2}B_{k,j}(\theta,\{\theta_n-\frac{\tau}{2}\}_{n=1}^{k-1})d\theta+(-1)^{j}\int_0^\frac{\tau}{2} B_{k,j}(\theta,\{\theta_n\}_{n=1}^{k-1})d\theta.
\end{aligned}
\end{equation*}
Thus, when $j$ is odd, it follows from equality \eqref{eq-mkj} that
$$
\int_0^\frac{\tau}{2}B_{k,j}(\theta,\{\theta_n\}_{n=1}^{k-1})d\theta=\int_0^\frac{\tau}{2} B_{k,j}(\theta,\{\theta_n-\frac{\tau}{2}\}_{n=1}^{k-1})d\theta.
$$
Hence,
$$
m_{k,j}=\int_0^\frac{\tau}{2}B_{k,j}(\theta,\{\theta_n\}_{n=1}^{k-1})d\theta+(-1)^{j}\int_0^\frac{\tau}{2} B_{k,j}(\theta,\{\theta_n-\frac{\tau}{2}\}_{n=1}^{k-1})d\theta=0
$$
if $j$ is odd.
\end{proof}

The next challenge  is to estimate the cardinal of the set $\tilde{\mathcal{S}}_{n+1,k}^{(s_1,s_2,m)}$. In the following we
show how to calculate the cardinal in some special cases, and in subsection 4.3 we provide a upper bound of the cardinal of the set $\tilde{\mathcal{S}}_{n+1,k}^{(s_1,s_2,m)}$.

\subsection{Perturbated homogeneous polynomial vector fields}
Let us first consider the homogenous case, that is, $s_1=s_2=1$. In fact, up to now, we do not see the upper bound about the number of zeros of Melnikov functions with any order for the general integrable homogeneous polynomial vector fields. Some special cases have been considered, see \cite{BGL,Iliev}, and for homogeneous Hamiltonian cases, a upper bound of zeros of Melnikov functions with any order is given by \cite{FHX}. Next result extends Theorem 15 in \cite{FHX}.
\begin{corollary}\label{ZeroHomo}
Assume that the degree $m$ homogeneous system \eqref{eq1}  has a center at the origin $O$, the perturbated system \eqref{eq2} is of degree $n$, and $n\geq m$. Then the first non-zero Melnikov function $M_k(h)$ has at most
\begin{equation*}
K(k,n)=\left\{
\begin{aligned}
&[\frac{kn-1}{2}],\quad \text{ $k$ is odd,} \\
&\frac{kn}{2},\quad \quad \text{ $k$ is even,}
\end{aligned}
\right.
\end{equation*}
positive zeros , where $[x]$ means the maximum integer not more than $x$.
\end{corollary}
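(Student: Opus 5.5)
The plan is to specialize Theorem \ref{mainthm} to $s_1=s_2=1$ and compute $\sharp\tilde{\mathcal{S}}_{n+1,k}^{(1,1,m)}$ explicitly. Throughout, $m$ is the weight degree appearing in Theorem \ref{mainthm}, so that $s_1+s_2+m-1=m+1$. This is the reading under which the hypothesis $n\geq m$ gives the stated count. If instead $m$ denoted the degree of the vector field, the shift would be $m+2$ and the case $n=m$ would give a different count.

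\textbf{Step 1: describe $\mathcal{S}$ as an integer interval.} For $s_1=s_2=1$ the quantity $is_1+js_2=i+j$ ranges over all integers $p$ with $l\leq p\leq l(n+1)$. Hence
$$\mathcal{S}_{n+1,k}^{(1,1,m)}=\bigcup_{l=1}^{k}\{p-l(m+1)\,:\,l\leq p\leq l(n+1)\}=\bigcup_{l=1}^{k}[-lm,\;l(n-m)]\cap\mathbb{Z}.$$
Since $n\geq m$, every one of these intervals contains $0$. Their lower endpoints $-lm$ decrease in $l$ and their upper endpoints $l(n-m)$ are nondecreasing in $l$, so the intervals are nested. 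The union is therefore the single interval $[-km,\,k(n-m)]\cap\mathbb{Z}$, which contains $kn+1$ consecutive integers.

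\textbf{Step 2: parity reduction.} Here $s_1,s_2$ are both odd, so $\tilde{\mathcal{S}}=\mathcal{S}\cap 2\mathbb{Z}$. By Corollary \ref{coro1}, $m+1$ is even, so $m$ is odd. Consequently the lower endpoint $-km$ has the parity of $k$, and the upper endpoint $k(n-m)$ has the parity of $k(n-1)$.
\begin{itemize}
\item If $k$ is even, both endpoints are even. The interval then contains $kn/2+1$ even integers, and subtracting one gives $kn/2$.
\item If $k$ is odd, the interval consists of $kn+1$ consecutive integers starting at an odd number. It therefore contains exactly $\lfloor (kn+1)/2\rfloor$ even integers, and subtracting one gives $\lfloor (kn-1)/2\rfloor$.
\end{itemize}

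\textbf{Step 3: conclude.} Invoking Theorem \ref{mainthm} gives the bound $K(k,n)$.

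The only delicate point is the nesting argument in Step 1. It is exactly where $n\geq m$ is used, and one must check that it yields a contiguous interval with no gaps. The parity count in Step 2 is routine once Corollary \ref{coro1} fixes the parity of $m$.
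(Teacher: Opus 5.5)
Your proposal is correct and is essentially the paper's proof: you identify $\mathcal{S}_{n+1,k}^{(1,1,m)}$ with the integer interval $[-km,k(n-m)]$, using $n\geq m$ for the nesting, then use Corollary~\ref{coro1} to get $m$ odd and count the even integers in that interval. Your caveat about the meaning of $m$ is unnecessary and its ``shift $m+2$'' alternative is mistaken: from Corollary~\ref{coro1} on, the paper takes $m$ to be the weight degree of the system in the sense of \cite{Li}, which for $s_1=s_2=1$ is exactly the degree of the vector field, so $\omega_0=Q\,dx-P\,dy$ has weight $s_1+s_2+m-1=m+1$ and there is no competing reading.
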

\begin{proof}
Note that
$$
\begin{aligned}
\mathcal{S}_{n+1,k}^{(1,1,m)}&=\bigcup_{j=1}^k\{i-j(m+1)|i\in\mathcal{T}_{j\cdot (n+1)}^{(1,1)}\setminus\mathcal{T}_{j-1}^{(1,1)}\} \\
&=\bigcup_{j=1}^k ([-j\cdot m, j\cdot (n-m)]\cap\mathbb{Z}) \\
&=[-k\cdot m, k\cdot (n-m)]\cap\mathbb{Z}.
\end{aligned}
$$
By Corollary \ref{coro1}, $m$ is odd. Then we have
$$
\begin{aligned}
\sharp\tilde{\mathcal{S}}_{n+1,k}^{(1,1,m)}&=\sharp(\mathcal{S}_{n+1,k}^{(1,1,m)} \cap 2\mathbb{Z})\\
&=\sharp([-k\cdot m, k\cdot (n-m)]\cap 2\mathbb{Z})=\left\{
\begin{aligned}
&[\frac{kn+1}{2}],\ \text{$k$ is odd}, \\
&\frac{kn}{2}+1,\ \text{$k$ is even}.
\end{aligned}
\right.
\end{aligned}
$$
Hence, $M_k(h)$ has at most
$$
\sharp\tilde{\mathcal{S}}_{n+1,k}^{(1,1,m)}-1=\left\{
\begin{aligned}
&[\frac{kn-1}{2}],\ \text{$k$ is odd}, \\
&\frac{kn}{2},\ \text{$k$ is even}.
\end{aligned}
\right.
$$
positive zeros.
\end{proof}

\subsection{The first order Melnikov function of perturbated quasi-homogeneous polynomial vector fields}
In this subsection, we consider the number of zeros of the first order Melnikov functions. In fact, this case is considered in the paper \cite{Li}, and next result is a generalization of Theorem A in \cite{Li}.
\begin{corollary}\label{firstMelnikov}
Assume that $(s_1,s_2)$-quasi-homogeneous vector fields  of weight degree $m$ has a center at the origin $O$, and the degree of  perturbated polynomials $P_i(x,y)$ and $Q_i(x,y)$ in system \eqref{eq2} are  not greater than $n$ and $n\ge m$. If the first order Melnikov $M_1$ does not vanish, then $M_1(h)$ has at most
$$
K(n,s_1)=\left\{
\begin{aligned}
&\frac{1}{4}(2s_1(n+1)-s_1^2+(3+(-1)^{n+1})s_1-7),\\
&\text{if $n+1\geq s_1$, $s_1$ and $s_2$ are both odd}; \\
&\frac{1}{4}((n+1)^2+(3+(-1)^{n+1})(n+1)+\frac{1}{2}(3\cdot(-1)^{n+1}-11)),\\
&\text{if $n+1< s_1$, $s_1$ and $s_2$ are both odd};\\
&\frac{1}{4}(2s_1(n+1)-s_1^2+2s_1-4+(-1)^{n+1}\cdot\frac{(-1)^{s_1}-1}{2}),\\
&\text{if $n+1\geq s_1$, $s_1+s_2$ is odd};\\
&\frac{1}{4}((n+1)^2+2(n+1)-\frac{1}{2}(7+(-1)^{n+1})),\quad \text{if $n+1< s_1$, $s_1+s_2$ is odd}
\end{aligned}
\right.
$$
zeros on $(0,+\infty)$.
\end{corollary}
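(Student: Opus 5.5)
The plan is to specialize Theorem \ref{mainthm} to $k=1$ and reduce the statement to a lattice-point count. For $k=1$ we have $\mathcal{S}_{n+1,1}^{(s_1,s_2,m)}=\{is_1+js_2-(s_1+s_2+m-1)\,:\,i,j\in\mathbb{N},\ 1\le i+j\le n+1\}$. By Corollary \ref{coro1}, $s_1+s_2+m-1$ is even, so subtracting it preserves parity. Hence
$$\sharp\tilde{\mathcal{S}}_{n+1,1}^{(s_1,s_2,m)}=\sharp\{v=is_1+js_2\,:\,1\le i+j\le n+1,\ v \text{ of the prescribed parity}\},$$
where the prescribed parity is even if $s_1,s_2$ are odd, and odd if $s_1+s_2$ is odd. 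The bound $K(n,s_1)$ is then this cardinality minus one. The homogeneous case $s_1=s_2=1$ is already covered by Corollary \ref{ZeroHomo}, so I assume $s_1>s_2$; this holds because the weights are coprime with $s_1\ge s_2$.

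\textbf{Step 1: distinct values.} The values $is_1+js_2$ can repeat, and I would remove repetitions with a normal form. Suppose $is_1+js_2=i's_1+j's_2$. Since $\gcd(s_1,s_2)=1$, this forces $i'=i+ts_2$ and $j'=j-ts_1$ for some $t\in\mathbb{Z}$. Along such a family the sum changes by $i'+j'-(i+j)=-t(s_1-s_2)$, which is strictly monotone in $t$ because $s_1>s_2$. So in each class the representative with $0\le j\le s_1-1$ is unique and has the smallest sum $i+j$. Consequently, if any representative satisfies $i+j\le n+1$, the canonical one does too. The distinct values are therefore in bijection with
$$\{(i,j)\in\mathbb{N}^2\,:\,0\le j\le \min(s_1-1,\,n+1),\ 1\le i+j\le n+1\}.$$

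\textbf{Step 2: the parity condition in these coordinates.} If $s_1,s_2$ are both odd, $is_1+js_2\equiv i+j \pmod 2$, so I keep the pairs with $i+j$ even. If $s_1$ is odd and $s_2$ is even, the value is odd iff $i$ is odd. If $s_1$ is even and $s_2$ is odd, it is odd iff $j$ is odd. The counts in the last two subcases should agree with each other, which matches the fact that the stated formula depends only on $s_1$ and on the parity of $n+1$. If they do not agree, I will redo the count using $i=(n+1-j)-\text{slack}$ and recheck.

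\textbf{Step 3: the count.} For each fixed $j$, the admissible $i$ run over $\max(0,1-j)\le i\le n+1-j$, which gives $n+2-j$ values for $j\ge1$ and $n+1$ values for $j=0$. Summing the appropriately parity-restricted counts over $j$ splits into two regimes. When $n+1\ge s_1$, $j$ ranges over $0,\dots,s_1-1$, and the sum is of arithmetic type, producing a $2s_1(n+1)-s_1^2$ leading term. When $n+1<s_1$, $j$ ranges over $0,\dots,n+1$, and the count is the full triangle, producing a $(n+1)^2$ leading term. The $(-1)^{n+1}$ and $(-1)^{s_1}$ corrections come from counting the even or odd integers in intervals with endpoints $0$, $n+1-j$ and $s_1-1$. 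Subtracting $1$ then yields the four formulas for $K(n,s_1)$.

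The main obstacle I expect is the parity bookkeeping in Step 3. One has to check each of the four parity combinations of $(n+1,s_1)$ within each regime against the closed forms. In particular, the floor-type counts $\lfloor (n+2-j)/2\rfloor$ or $\lceil (n+2-j)/2\rceil$ alternate with $j$, and they must be paired carefully so that the quoted constants such as $-7$ and $\tfrac12(3(-1)^{n+1}-11)$ come out. I would handle this by summing over consecutive pairs $(j,j+1)$ first, then treating a possible leftover term separately. The hypothesis $n\ge m$ does not appear to enter the count itself; I would only use it, if at all, to make sure no further cancellation is needed.
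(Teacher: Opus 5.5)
Your proposal is correct. The reduction is the same as the paper's: Theorem \ref{mainthm} with $k=1$, together with the evenness of $s_1+s_2+m-1$ from Corollary \ref{coro1}, turns the bound into $\sharp(\mathcal{T}_{n+1}^{(s_1,s_2)}\cap 2\mathbb{Z})-2$ in the odd--odd case and $\sharp(\mathcal{T}_{n+1}^{(s_1,s_2)}-2\mathbb{Z})-1$ in the mixed case.

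Where you differ is the count of distinct values in $\mathcal{T}_r^{(s_1,s_2)}$. The paper counts the lines $L_u=\{s_1i+s_2j=u\}$ that meet the triangle $\Delta_r$ in a lattice point. It splits them into three types according to the length of $L_u\cap\Delta_r$, and handles the short lines near the two ends by symmetry pairings of lattice points. Your minimal-sum normal form ($0\le j\le s_1-1$, legitimate since $s_1>s_2$) replaces all of this with one bijection onto the lattice points of the trapezoid $\{0\le j\le\min(s_1-1,r),\ 0\le i\le r-j\}$. This is shorter and easier to check. It also characterizes membership in every $\mathcal{T}_q^{(s_1,s_2)}$ at once: a value lies in $\mathcal{T}_q$ iff its normal form has $i+j\le q$. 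So it would apply equally to the sets $\mathcal{T}_{l(n+1)}^{(s_1,s_2)}\setminus\mathcal{T}_{l-1}^{(s_1,s_2)}$ that enter the higher-order bounds. The paper's argument is more geometric and follows \cite{Li}.

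The bookkeeping you worry about is routine.
\begin{itemize}
\item Odd--odd case: sum over even $s=i+j$ rather than over $j$. For $r\ge s_1$ this gives $\sum_{s\ \mathrm{even}\le r}(\min(s,s_1-1)+1)=\frac{(s_1+1)^2}{4}+\frac{s_1}{2}\bigl(r-s_1+\frac{1+(-1)^r}{2}\bigr)$. For $r<s_1$ it gives $\frac{(r+2)^2}{4}$ or $\frac{(r+1)^2}{4}$ according as $r$ is even or odd.
\item Mixed case, $r\ge s_1$: for $s_1$ odd, $\sum_{j=0}^{s_1-1}\lceil\frac{r-j}{2}\rceil=\frac14(2rs_1-s_1^2+2s_1-(-1)^r)$. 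For $s_1$ even, $\sum_{j\ \mathrm{odd}<s_1}(r-j+1)=\frac14(2rs_1-s_1^2+2s_1)$.
\item Mixed case, $r<s_1$: both subcases give $\frac14(r^2+2r+\frac{1-(-1)^r}{2})$.
\end{itemize}
Subtracting $2$ (odd--odd) or $1$ (mixed) yields exactly the four stated formulas.

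Two small corrections. First, the two mixed subcases do not agree when $r\ge s_1$, and they need not. The factor $\frac{(-1)^{s_1}-1}{2}$ in the statement is exactly what separates $s_1$ odd from $s_1$ even, so your proposed consistency check is misplaced, though harmless. Second, the normal form works verbatim for $s_1=s_2=1$ (take $j=0$), so the detour through Corollary \ref{ZeroHomo} is unnecessary.
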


We first provide a technical result as follows. Some of the situations (e.g. $r\geq s_1$) in following result have been achieved in Proposition 1 in \cite{Li}. For the convenience of readers, we will provide complete proof of the following lemma.
\begin{lemma}
\begin{itemize}
\item[(i)] if $s_1$ and $s_2$ are both odd, then
$$
\sharp (\mathcal{T}_r^{(s_1,s_2)}\cap(2\mathbb{Z}))=\left\{
\begin{aligned}
&\frac{1}{4}(2rs_1-s_1^2+(3+(-1)^r)s_1+1),\quad r\geq s_1, \\
&\frac{1}{4}(r^2+(3+(-1)^r)r+\frac{1}{2}(5+3\cdot(-1)^r)),\quad r< s_1;
\end{aligned}
\right.
$$
\item[(ii)] if $s_1$ is odd and $s_2$ is even, then
$$
\sharp (\mathcal{T}_r^{(s_1,s_2)}-(2\mathbb{Z}))=\left\{
\begin{aligned}
&\frac{1}{4}(2rs_1-s_1^2+2s_1-(-1)^r),\quad r\geq s_1, \\
&\frac{1}{4}(r^2+2r+\frac{1}{2}(1-(-1)^r)),\quad r< s_1;
\end{aligned}
\right.
$$
\item[(iii)] if $s_1$ is even and $s_2$ is odd, then
$$
\sharp (\mathcal{T}_r^{(s_1,s_2)}-(2\mathbb{Z}))=\left\{
\begin{aligned}
&\frac{1}{4}(2rs_1-s_1^2+2s_1),\quad r\geq s_1, \\
&\frac{1}{4}(r^2+2r+\frac{1}{2}(1-(-1)^r)),\quad r< s_1.
\end{aligned}
\right.
$$
\end{itemize}
\end{lemma}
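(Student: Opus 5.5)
Throughout we use the standing convention $s_1\ge s_2\ge 1$ with $\gcd(s_1,s_2)=1$ and take $\mathbb N=\{0,1,2,\dots\}$. The plan is to count the distinct values of $is_1+js_2$ with $i+j\le r$, filtered by parity, by splitting on the total degree $t=i+j$ and on whether distinct pairs give the same weight.

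\textbf{Collisions.} Two pairs with the same weight satisfy $s_1(i-i')=s_2(j'-j)$. By coprimality this forces $i-i'=ks_2$ and $j'-j=ks_1$, so the two total degrees differ by $k(s_1-s_2)$. Consequently, when $s_1>s_2$, a value $v=is_1+js_2$ with $t=i+j$ is represented by the pair of smallest total degree exactly when $i<s_2$ or $j<s_1$ fails to be improvable; since $i\to i+s_2$, $j\to j-s_1$ lowers $t$, the canonical representative is the one with $j\le s_1-1$. The count therefore becomes
\[
\sharp\mathcal T_r^{(s_1,s_2)}=\sharp\{(i,j): i+j\le r,\ 0\le j\le s_1-1\}.
\]
For $r<s_1$ there are no collisions and this equals $(r+1)(r+2)/2$. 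For $r\ge s_1$, summing over $j=0,\dots,s_1-1$ the quantity $r-j+1$ gives $s_1(r+1)-s_1(s_1-1)/2$. In the case $s_1=s_2=1$ (only possible in (i)), $r\ge s_1$ always holds and the formula gives $r+1$, which is correct.

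\textbf{Parity filtering.} On the canonical index set we impose the parity condition and count lattice points in each column $j$ with $0\le i\le r-j$.

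\emph{Case (i), $s_1,s_2$ odd.} Here the parity of $is_1+js_2$ equals the parity of $i+j$, so we need $i+j$ even. Column $j$ contributes $\lfloor (r-j)/2\rfloor+1$ when $j$ is even and $\lfloor (r-j+1)/2\rfloor$ when $j$ is odd. Summing over $j<\min(s_1,r+1)$ and separating by the parities of $r$ and $s_1$ (with $s_1$ odd) should produce the stated expression $\frac14(2rs_1-s_1^2+(3+(-1)^r)s_1+1)$. For $r<s_1$, the number of pairs with $i+j=t$ is $t+1$; summing over even $t\le r$ gives $\frac14(r^2+(3+(-1)^r)r+\frac12(5+3(-1)^r))$.

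\emph{Cases (ii) and (iii), $s_1+s_2$ odd.} The odd elements satisfy, respectively, $i$ odd (when $s_1$ is odd) or $j$ odd (when $s_1$ is even). Column sums are computed the same way. In case (iii), $j$ odd with $j<s_1$, $s_1$ even, gives $s_1/2$ columns, and the contributions $r-j+1$ sum to a value with no parity dependence on $r$, which explains why no $(-1)^r$ appears. For $r<s_1$, summing over $t\le r$ the number of pairs with $i+j=t$ and $i$ (or $j$) odd, namely $\lfloor (t+1)/2\rfloor$, yields $\frac14(r^2+2r+\frac12(1-(-1)^r))$ in both cases.

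\textbf{Main obstacle.} The delicate part is the bookkeeping of floors in the sums over $r\ge s_1$, together with confirming that the canonical-representative reduction is compatible with parity: the canonical representative has the same weight, so the filter applies to it equally. For these sums I would group consecutive pairs of columns $j,j+1$, each pair contributing $r-2j'+1$-type linear terms, and then handle the last unpaired column separately according to the parity of $s_1$. Finally I would spot-check the formulas with small values such as $s_1=3,s_2=1$ and $r=3,4$ to catch off-by-one errors.
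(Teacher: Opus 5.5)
Your proof is correct. For $r\ge s_1$ it takes a genuinely different and more economical route than the paper; for $r<s_1$ the paper, like you, simply uses that distinct lattice points give distinct weights.

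\textbf{The paper's route.} It counts the lines $L_u=\{s_1i+s_2j=u\}$ with $u$ of the right parity that meet the triangle $\Delta_r$ in a lattice point. It splits $u$ into three ranges:
\begin{itemize}
\item a middle range $s_1s_2\le u\le s_1(r-(s_1-s_2))$, where $L_u\cap\Delta_r$ has length at least $\sqrt{s_1^2+s_2^2}$ and so automatically contains a lattice point;
\item two boundary ranges, near the origin and near the corner $(r,0)$, each handled by a separate reflection argument that pairs lattice points.
\end{itemize}

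\textbf{Your route.} You replace all of this with a normal form. Two representations of the same value differ by a multiple of $(s_2,-s_1)$, and this shift changes $i+j$ by $s_2-s_1\le 0$. Hence every value has exactly one representation with $0\le j\le s_1-1$, and that representation minimizes $i+j$. So $\mathcal{T}_r^{(s_1,s_2)}$ is in bijection with the lattice points of the trapezoid $\{i+j\le r,\ 0\le j\le s_1-1\}$. The parity filter then becomes a condition on $i+j$, on $i$, or on $j$ alone.

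\textbf{Comparison.} Your approach treats the three parity cases uniformly and gives $\sharp\mathcal{T}_r^{(s_1,s_2)}$ itself as a by-product. It is also much easier to check than the paper's boundary bookkeeping. What the paper's line-counting picture offers is mainly geometric intuition.

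\textbf{The sums you left open.} The sums you described as ``should produce'' do close. Write $s_1=2p+1$.
\begin{itemize}
\item Case (i) gives $pr-p^2+p+\frac{r}{2}+1$ for even $r$ and $pr-p^2+\frac{r+1}{2}$ for odd $r$.
\item In case (ii), pairing columns via $\lfloor\frac{N+1}{2}\rfloor+\lfloor\frac{N}{2}\rfloor=N$ gives $pr-p^2+\lfloor\frac{r+1}{2}\rfloor$.
\end{itemize}
Both agree with the stated formulas.

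\textbf{One fix.} The garbled sentence about ``$i<s_2$ or $j<s_1$ fails to be improvable'' should simply be replaced by the uniqueness-and-minimality statement above.
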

\begin{proof}
We only prove statement (i) since the approaches of proofs in statements (ii) and (iii) are similar. For integer $u$, we define a family of straight lines
$$
L_u=\{(i,j)\in\mathbb{R}^2:s_1i+s_2j=u\},
$$
and let $\Delta_r$ be the closed triangle region surrounded by lines $i=0,j=0$ and $i+j=r$. Then $\sharp (\mathcal{T}_r^{(s_1,s_2,m)}\cap(2\mathbb{Z}))$ is the number of line $L_u$ which satisfies $L_u\cap\Delta_r$ has at least one integer point (that is the point whose two coordinates are both integer) and $u$ is even.

An important observation needs to be pointed out. If $(i,j)=(i_0,j_0)$ is an integer point on line $L_u$, then all the other integer points on $L_u$ can be represented as
$$
(i,j)=(i_0+ks_2,j_0-ks_1),\quad k\in\mathbb{Z},
$$
since $s_1,s_2$ are coprime. Thus, the distance between two adjacent integer points on line $L_u$ is equal to $\sqrt{s_1^2+s_2^2}$. This fact implies line $L_u$ has at least one integer point in the interior of $\Delta_r$ if the length of the segment of $L_u\cap\Delta_r$ is greater than $\sqrt{s_1^2+s_2^2}$.

In the following, we divide the proof of statement (i) into two cases.

\textbf{Case (i): $r\geq s_1$.} It is straightforward to verify that there are two unique lines $L_{s_1s_2}$ and $L_{s_1(r-(s_1-s_2))}$ such the length of segments $L_u\cap\Delta_r$ is equal to $\sqrt{s_1^2+s_2^2}$. When $r\geq s_1$, the lines $L_u$ which intersect with the region $\Delta_r$ have three types:
\begin{itemize}
\item[I.] the lines $L_u$ with $u=s_1s_2,s_1s_2+1,\dots,s_1(r-(s_1-s_2))$;
\item[II.] the lines $L_u$ with $u=0,1,\dots,s_1s_2-1$;
\item[III.] the lines $L_u$ with $u=s_1(r-(s_1-s_2))+1,s_1(r-(s_1-s_2))+2,\dots,rs_1$.
\end{itemize}
We first calculate the number of lines of type I. It is clear that the length of the segments of lines of type I in $\Delta_r$ is greater than or equal to $\sqrt{s_1^2+s_2^2}$. Thus, the lines of type I always have integer points in $\Delta_r$. It implies that all the even integers in the set $\{s_1s_2,s_1s_2+1,\dots,s_1(r-(s_1-s_2))\}$ belong to the set $\mathcal{T}_r^{(s_1,s_2,m)}\cap(2\mathbb{Z})$, and the number of these integers is equal to
$$
N_I=\frac{s_1(r-(s_1-s_2))-s_1s_2+\frac{1+(-1)^r}{2}}{2}=\frac{2s_1(r-s_1)+1+(-1)^r}{4}.
$$

Next we calculate the number of lines of type II. Let
$$
\delta=\{(i,j)\in\mathbb{R}^2|\ 0\leq i<s_2,\ 0\leq j<s_1,\ s_1i+s_2j<s_1s_2\},
$$
Note that each lines $L_u$ of type II passing through different integer points in $\delta$ determine different values of $u\in\{0,1,\dots,s_1s_2-1\}$. Otherwise, there exist two integer points $(i,j)=(i_1,j_1),(i_2,j_2)$ lie on a same line $L_{u_0}$, then the distance of such two integer points are greater than or equal to $\sqrt{s_1^2+s_2^2}$. It implies $u_0\geq s_1s_2$,  which is a contradiction with the fact $L_{u_0}\cap\delta\neq\emptyset$. Next we only need to calculate the number of integer points $(i,j)$ which satisfy $s_1i+s_2j$ is even in $\delta$.

We first claim there always exists an integer point on $L_u$ in $\delta$ for any $u=s_1s_2-s_2,s_1s_2-s_2+1,\dots,s_1s_2-1$.
If the claim is not true, then there exists a $L_{u_0}$ such that $L_{u_0}\cap\delta=\emptyset$, where $s_1s_2-s_2\leq u_0\leq s_1s_2-1$. Consider the parallelogram
$$
\delta^*=\{(i,j)\in\mathbb{R}^2|\ 0\leq j< s_1,\ 0\leq s_1i+s_2j< s_1s_2\}.
$$
Since $L_{u_0}\cap\delta^*$ has at least one integer point, $L_{u_0}$ shall have an integer point in the region
$$
(\bigcup_{s_1s_2-s_2\leq u\leq s_1s_2-1}L_u)\bigcap(\delta^*\setminus\delta)=\{(i,j)\in\mathbb{R}^2|\ i<0,\ j< s_1,\  s_1i+s_2j\geq s_1s_2-s_2\}.
$$
It implies that  $s_1-1< j< s_1$,  which leads to a contradiction. $L_u$ for $u=s_1s_2-s_2,s_1s_2-s_2+1,\dots,s_1s_2-1$ contains exactly
$$
\frac{1}{2}(s_1s_2-1-(s_1-1)s_2)+1=\frac{1}{2}(s_2-1)+1
$$
lines with even $u$.

Now we consider the lines $L_u$ of type II with $u=0,1,\dots,s_1s_2-s_2-1$. It is clear that there are exactly $\frac{1}{2}(s_1s_2-s_2)$ lines $L_u$ of type II with even $u$, and $\frac{1}{2}(s_1-1)$ of them pass through the point $(i,j)=(0,0),(0,2),(0,4),\dots,(0,s_1-3)$ on the $j$-axis. The left $\frac{1}{2}(s_1s_2-s_2)-\frac{1}{2}(s_1-1)$ lines of type II intersect $\delta^*$ except $j$-axis with exactly one integer point which belongs to either
$$
\delta_1=\delta\bigcap \{(i,j)\in\mathbb{R}^2|\ i>0,\ s_1i+s_2j\leq s_1s_2-s_2\}
$$
or
$$
\delta_2=(\delta^*\setminus\delta)\bigcap\{(i,j)\in\mathbb{R}^2|\ j\leq s_1-1\},
$$
and these two regions are symmetric about the point $(i,j)=(0,\frac{1}{2}(s_1-1))$.

It is straightforward to verify that $(-i_0,s_1-1-j_0)\in\delta_2$ is an integer point on some $L_u$ with even $u$ if $(i_0,j_0)\in \delta_1$ is an integer point such that $s_1i_0+s_2j_0$ is even. Hence, half of these intersection points are in the interior of $\delta_1$ and the other half are in the interior of $\delta_2$ which means there are only
$$
\frac{1}{2}(\frac{1}{2}(s_1s_2-s_2)-\frac{1}{2}(s_1-1))=\frac{1}{4}(s_1-1)(s_2-1)
$$
left lines having integer points in $\delta$. To summarize, there exist exactly
$$
N_{II}=\frac{1}{2}(s_2-1)+1+\frac{1}{2}(s_1-1)+\frac{1}{4}(s_1-1)(s_2-1)=\frac{1}{4}(s_1+1)(s_2+1)
$$
lines of type II with even $u$ having an integer point in $\delta$.

At last, we calculate the number of lines of type III. Let
$$
\Delta=\{(i,j)\in\mathbb{R}^2|\ j\geq 0,\ i+j\leq r,\ s_1i+s_2j>s_1(r-(s_1-s_2))\},
$$
It is easily verified that each lines $L_u$ of type III passing through different integer points in $\Delta$ determine different values of $u\in\{s_1(r-(s_1-s_2))+1,s_1(r-(s_1-s_2))+2,\dots,rs_1\}$.
Let
$$
\Delta^*=\{(i,j)\in\mathbb{R}^2|\ 0\leq j<s_1,\ s_1(r-(s_1-s_2))<s_1i+s_2j<rs_1 \}.
$$
It is easily verified that there are exactly $\frac{1}{2}s_1(s_1-s_2)$ lines $L_u$ of type III with even $u$, and these lines has exactly one integer points in $\Delta^*$.

If $r$ is even, then $\frac{1}{2}(s_1-s_2)$ of above lines pass through integer points $(r-(s_1-s_2)+2,0),(r-(s_1-s_2)+4,0),\dots,(r,0)$ on the $i$-axis, and $s_1-1$ of above lines pass through integer points $(r-j,j)$ on line $i+j=r$ with $j=1,2,\dots,s_1-1$. The number of these two kind of lines is $\frac{1}{2}(s_1-s_2)+s_1-1$.

If $r$ is odd, then  $\frac{1}{2}(s_1-s_2)$ of above lines pass through integer points $(r-(s_1-s_2)+1,0),(r-(s_1-s_2)+3,0),\dots,(r-1,0)$ on the $i$-axis, and none of above lines pass through integer points $(r-j,j)$ on line $i+j=r$ with $j=1,2,\dots,s_1-1$. The number of these lines is $\frac{1}{2}(s_1-s_2)$.

Let us consider the line with even $u$ which only pass through the integer points in the interior of $\Delta$. Note that the number of lines $L_u$ passing through the integer points in $\Delta^*$ except the line $i+j=r$ and $i$-axis is equal to $\frac{1}{2}s_1(s_1-s_2)-(\frac{1}{2}(s_1-s_2)+\frac{1+(-1)^r}{2}(s_1-1))$. It is no hard to see that the integer point $(2r-s_1-i_0,s_1-j_0)$ determine a line $L_u$ with some even $u$ if $(i_0,j_0)$ is another integer point such that $s_1i_0+s_2j_0$ is even. Hence, half of these $\frac{1}{2}s_1(s_1-s_2)-(\frac{1}{2}(s_1-s_2)+\frac{1+(-1)^r}{2}(s_1-1))$ line pass through the integer points in the interior of $\Delta$ and the other half pass through the integer points in the interior of $\Delta\setminus\Delta^*$. To summarize, there exist exactly
$$
\begin{aligned}
N_{III}&=\frac{1}{2}(\frac{1}{2}s_1(s_1-s_2)-(\frac{1}{2}(s_1-s_2)+\frac{1+(-1)^r}{2}(s_1-1)))+\frac{1}{2}(s_1-s_2)\\
&+\frac{1+(-1)^r}{2}(s_1-1)\\
&=\frac{1}{4}((s_1+1)(s_1-s_2)+(1+(-1)^r)(s_1-1))
\end{aligned}
$$
lines of type III with even $u$ having an integer point in $\Delta$.

To sum up, if $r\geq s_1$, then
$$
\begin{aligned}
\sharp (\mathcal{T}_r^{(s_1,s_2)}\cap(2\mathbb{Z}))&=N_I+N_{II}+N_{III}\\
&=\frac{1}{4}(2rs_1-s_1^2+(3+(-1)^r)s_1+1).
\end{aligned}
$$

\textbf{Case (ii): $r<s_1$.} It is clear that different integer points in $\Delta_r$ determine different line $L_u$. Since $s_1$ and $s_2$ are both odd, $s_1i+s_2j$ is even if and only if $i+j$ is even. Hence, we only need to count the number of integer point in $\Delta_r$ which satisfy $i+j$ is even.

Note that the integer points on $L_{u}\cap\Delta_r$ are $(l-j,j)$ with $j=0,1,2,\dots,l$ (the total number is $l+1$) and $l=0,1,\dots,r$. If $r$ is even, then the number of integer point in $\Delta_r$ with even $i+j$ is
$$
1+3+5+\dots+(r+1)=\frac{1}{4}(r+2)^2.
$$
If $r$ is odd, then the number of integer point in $\Delta_r$ with even $i+j$ is
$$
1+3+5+\dots+(r-1+1)=\frac{1}{4}(r+1)^2.
$$
To summarize, if $r< s_1$, then
$$
\sharp (\mathcal{T}_r^{(s_1,s_2,m)}\cap(2\mathbb{Z}))=\frac{1}{4}(r^2+(3+(-1)^r)r+\frac{1}{2}(5+3\cdot(-1)^r)).
$$
\end{proof}

Next we finish the proof of Corollary \ref{firstMelnikov}.

\begin{proof}[Proof of Corollary \ref{firstMelnikov}]
We only prove the conclusion in case $s_1$ and $s_2$ are both odd, and the others can be proved using the similar arguments. For given odd $s_1,s_2,m$ (recall $s_1+s_2+m-1$ is even), by Theorem \ref{mainthm}, the number of positive zero of $M_1(h)$ is less than or equal to
$$
\begin{aligned}
\sharp\tilde{S}^{(s_1,s_2,m)}_{n+1,1}-1&= \sharp(\{p-(s_1+s_2+m-1)|p\in\mathcal{T}_{n+1}^{(s_1,s_2)}\setminus\{0\}\}\cap(2\mathbb{Z}))-1\\
&=\sharp(\mathcal{T}_{n+1}^{(s_1,s_2)}\cap(2\mathbb{Z}))-2\\
&=K(n,s_1).
\end{aligned}
$$
\end{proof}

\subsection{Higher order Melnikov functions of perturbated quasi-homogeneous polynomial systems}
Finally, we extend the results  in \cite{FHX} from the quasi-homogeneous Hamiltonian case  to the general quasi-homogeneous case. We obtain an upper bounded on the number of zeros of the first non-vanishing $k$th order Melnikov functions $M_k(h)$ of system \eqref{eq2} as follows.
\begin{theorem}\label{mainthm1}
Assume that system \eqref{eq1} is $(s_1,s_2)$-quasi-homogeneous with weight degree $m$, and the degree of  perturbated system \eqref{eq2}
is $n$ and $n\geq m$. If system \eqref{eq1} has a center at the origin $O$, then the first non-vanishing $k$th order Melnikov function $M_k(h)$ of system \eqref{eq2} has at most
\begin{equation*}
K(k,n,s_1,s_2)=\left\{
\begin{aligned}
&\frac{k((n+1)s_1-s_2)}{2}-1,\quad
\begin{aligned}
&\text{$n$ is even, and all $k,s_1,s_2$ are odd,}\\
&\text{$k$ is even, and $s_1+s_2$ is odd,}\\
&\text{$s_2$ is even, and all $k,s_1,n$ is odd,}
\end{aligned}\\
&\frac{k((n+1)s_1-s_2)}{2},\quad \quad \text{ $k$ is even, and both $s_1$ and $s_2$ are odd,}\\
&\frac{k((n+1)s_1-s_2)-1}{2},\quad \text{others},
\end{aligned}
\right.
\end{equation*}
positive zeros.
\end{theorem}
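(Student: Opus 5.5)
By Theorem \ref{mainthm}, the first non-vanishing $M_k$ has at most $\sharp\tilde{\mathcal{S}}_{n+1,k}^{(s_1,s_2,m)}-1$ zeros on $(0,+\infty)$. So the task is to bound the cardinality of $\tilde{\mathcal{S}}_{n+1,k}^{(s_1,s_2,m)}$. The plan is to enclose $\mathcal{S}_{n+1,k}^{(s_1,s_2,m)}$ in an explicit integer interval and then count the integers of the correct parity in that interval.

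\textbf{Step 1: the interval.} Write $c=s_1+s_2+m-1$. A generic element is $is_1+js_2-lc$ with $l\le i+j\le l(n+1)$ and $1\le l\le k$. Since $s_1\ge s_2$, for fixed $l$ we have
$$ls_2\le is_1+js_2\le l(n+1)s_1 .$$
Hence each element lies in $[\,l(s_2-c),\,l((n+1)s_1-c)\,]$.
\begin{itemize}
\item The upper bound is increasing in $l$ provided $(n+1)s_1\ge c$. This should follow from $n\ge m$: indeed $(n+1)s_1\ge (m+1)s_1\ge m+s_1+s_2-1$ whenever $ms_1\ge m+s_2-1$, which holds because $s_1\ge s_2\ge1$.
\item The lower bound $l(s_2-c)$ is decreasing in $l$, since $s_2-c=1-s_1-m<0$.
\end{itemize}
Therefore $\mathcal{S}_{n+1,k}^{(s_1,s_2,m)}\subseteq [\,k(s_2-c),\,k((n+1)s_1-c)\,]\cap\mathbb{Z}$. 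This is an interval of length $L=k((n+1)s_1-s_2)$.

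\textbf{Step 2: the parity count.} Corollary \ref{coro1} says that $c$ is even. In the case where $s_1,s_2$ are both odd, we count the even integers in the interval. Its endpoints are $k(s_2-c)\equiv ks_2$ and $k((n+1)s_1-c)\equiv k(n+1)\pmod 2$. An interval of length $L$ with both endpoints even contains $L/2+1$ even integers; otherwise it contains $\lfloor L/2\rfloor+1$ or $\lceil L/2\rceil$ of them, depending on the endpoint parities. In the case $s_1+s_2$ odd we count odd integers in the same way. Subtracting $1$ and splitting according to the parities of $k$, $n$, $s_1$, $s_2$ should reproduce the three-line formula $K(k,n,s_1,s_2)$:
\begin{itemize}
\item both endpoints of the wanted parity gives $L/2$;
\item exactly one endpoint of the wanted parity gives $(L-1)/2$;
\item neither endpoint of the wanted parity, with $L$ even, gives $L/2-1$.
\end{itemize}
I will verify each listed case. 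For example, take $k$ even and $s_1,s_2$ odd: both endpoints are even and $L$ is even, giving $L/2$. Take $n$ even and $k,s_1,s_2$ odd: the endpoint parities are $s_2$ odd and $k(n+1)$ odd, so no endpoint is even and $L=k((n+1)s_1-s_2)$ is even, giving $L/2-1$.

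\textbf{Main obstacle.} The main difficulty is the bookkeeping in Step 2: matching every parity combination to the stated formula. In the $s_1+s_2$ odd cases the endpoint parities involve $ks_2$ and $k(n+1)s_1$ with mixed parities, and I must check that the paper's groupings, such as "$s_2$ even and $k,s_1,n$ odd", are exactly the cases with no endpoint of the right parity. A secondary check is the monotonicity claim in Step 1, which relies on $n\ge m$.
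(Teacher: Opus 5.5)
Your proposal is correct and follows essentially the same route as the paper: bound by $\sharp\tilde{\mathcal{S}}_{n+1,k}^{(s_1,s_2,m)}-1$ via Theorem \ref{mainthm}, enclose $\mathcal{S}_{n+1,k}^{(s_1,s_2,m)}$ in $[k(s_2-c),\,k((n+1)s_1-c)]$, and count the integers of the right parity there using that $c=s_1+s_2+m-1$ is even. Your endpoint-parity bookkeeping reproduces all three lines of $K(k,n,s_1,s_2)$, including the $s_1+s_2$ odd cases that the paper only calls ``similar''; your monotonicity check, which the paper merely asserts, additionally needs $m\ge 1$, and this holds because a center forces $c\ge 2s_1s_2$.
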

\begin{proof}
We only prove this lemma in case $s_1$ and $s_2$ are both odd since the approaches of proofs in the remaining cases are similar. When $n\geq m$, the maximal and minimal number in $\mathcal{S}_{n+1,k}^{(s_1,s_2,m)}$ are $k((n+1)s_1-(s_1+s_2+m-1))$ and $k(s_2-(s_1+s_2+m-1))$ respectively. Hence,
$$
\mathcal{S}_{n+1,k}^{(s_1,s_2,m)}\subseteq [k(s_2-(s_1+s_2+m-1)),k((n+1)s_1-(s_1+s_2+m-1))]\cap\mathbb{Z}.
$$
Recall $s_1+s_2+m-1$ is even by Corollary \ref{coro1}, then we have $s_2-(s_1+s_2+m-1)$ is odd. Hence
$$
\begin{aligned}
\sharp\tilde{\mathcal{S}}_{n+1,k}^{(s_1,s_2,m)}&\leq\sharp ([k(s_2-(s_1+s_2+m-1)),k((n+1)s_1-(s_1+s_2+m-1))]\cap2\mathbb{Z})\\
&=\left\{
\begin{aligned}
&[\frac{k((n+1)s_1-s_2)+1}{2}],\quad \text{ $k$ is odd,}\\
&\frac{k((n+1)s_1-s_2)}{2}+1,\quad \text{ $k$ is even.}
\end{aligned}
\right.
\end{aligned}
$$
If $k$ is even, by Theorem \ref{mainthm}, $M_k(h)$ has at most $\sharp\tilde{\mathcal{S}}_{n+1,k}^{(s_1,s_2,m)}-1=\frac{k((n+1)s_1-s_2)}{2}$ positive zeros; If $k$ is odd and $n$ is even, then $(n+1)s_1-s_2$ is even. Hence, $M_k(h)$ has at most $\sharp\tilde{\mathcal{S}}_{n+1,k}^{(s_1,s_2,m)}-1=\frac{k((n+1)s_1-s_2)}{2}-1$ positive zeros. Similarly, if $k$ and $n$ are both odd, then $(n+1)s_1-s_2$ is odd and $M_k(h)$ has at most $\sharp\tilde{\mathcal{S}}_{n+1,k}^{(s_1,s_2,m)}-1=\frac{k((n+1)s_1-s_2)+1}{2}-1$ positive zeros.
\end{proof}

{\bf Remark}: Theorem \ref{mainthm1} and Theorem 5 in \cite{FGX} imply that the cyclicity of the period
annulus of any quasi-homogeneous polynomial vector fields is finite. Since system \eqref{eq2} is any quasi-homogeneous polynomial vector field in $\mathbb{R}^2$,  the upper bounded in Theorem  \ref{mainthm1} is usually not sharp.
In the last section we will provide an example to show that. Therefore, for a given quasi-homogeneous polynomial vector field with a center, finding the exact upper bound on the number of isolated zeros of any $k$-th Melnikov function remains an interesting and worthy topic for further research.

\section{Application}
In the last, we provide an example to explain how our algorithm works in the bifurcation problem of perturbated quasi-homogeneous polynomial foliations. Consider the perturbated quasi-homogeneous Hamiltonian foliation
\begin{equation}\label{pullback_eightloop}
\omega_\epsilon=dH+\epsilon \omega_1=0,\quad 0<\epsilon\ll 1
\end{equation}
where $H=\frac{1}{4}x^4+\frac{1}{2}y^2$ is a $(1,2)-$quasi-homogeneous polynomial of weight degree four, and
$$
\omega_1=(c_0+c_1x+c_2x^2+c_3y+c_4x^3+c_5xy)dx+(b_0+b_1x+b_2x^2+b_3y)dy,
$$
which includes all 1-forms whose degree is  less than the degree of  quasi-homogeneous  1-form $dH$.

Under the generalized polar coordinates transformation $\Phi(r,\theta)=(rCs\theta,r^2Sn\theta)$, the foliation \eqref{pullback_eightloop} has the form
\begin{equation}\label{pullback_eightloop_polar}
dr+\epsilon(F(r,\theta)dr+G(r,\theta)d\theta)=0,
\end{equation}
where
$$
\begin{aligned}
F(r,\theta)=&r^{-3}\cdot c_0 Cs\theta + r^{-2}\cdot (c_1 Cs^2\theta + 2b_0 Sn\theta)+r^{-1}\cdot ((2b_1+c_3)Cs\theta Sn\theta \\
&+ c_2 Cs^3\theta)+c_4 Cs^4\theta + (c_5 + 2 b_2) Cs^2\theta Sn\theta + 2 b_3 Sn\theta ,\\
G(r,\theta)=&r^{-2}\cdot (-c_0 Sn\theta) + r^{-1} \cdot (b_0 Cs^3\theta - c_1 Cs\theta Sn\theta) + b_1 Cs^4\theta - c_2 Cs^2\theta\\
&\cdot Sn\theta - c_3 Sn^2\theta + r \cdot (b_2 Cs^5\theta + (b_3-c_4) Cs^3\theta Sn\theta - c_5 Cs\theta Sn^2\theta).
\end{aligned}
$$
Note that the period annulus is the union of level set $r=h$, $h>0$ for foliation \eqref{pullback_eightloop_polar}. Let us consider the bifurcation of limit cycles from the  period annulus of foliation \eqref{pullback_eightloop_polar}.


\begin{theorem}
The number of limit cycles bifurcated from the open period annulus $\mathbb{R}^2\setminus\{(0,0)\}$ of Hamiltonian foliation $d(\frac{1}{4}x^4+\frac{1}{2}y^2)=0$ with respect to one-parameter linear deformation \eqref{pullback_eightloop} is equal to one.
More precise, the following statements hold.
\begin{itemize}
\item $M_1(h)$ has no positive zero if $M_1(h)\not\equiv 0$;
\item $M_2(h)$ has a unique  positive zero if $M_1(h)\equiv 0$ and $M_2(h)\not\equiv 0$;
\item $M_3(h)$ has a unique  positive zero if  $M_1(h)=M_2(h)\equiv 0$ and $M_3(h)\not\equiv 0$;
\item $M_k(h)\equiv 0$ for all $k\ge 4$ if  $M_1(h)=M_2(h)=M_3(h)\equiv 0$.
\end{itemize}
Futhermore, the foliation \eqref{pullback_eightloop} always possesses an open period annulus $\mathbb{R}^2\setminus U_{(0,0)}$ if $M_1(h)=M_2(h)=M_3(h)\equiv 0$, where $U_{(0,0)}$ is a small neighborhood of the origin. The algebraic set $M_1(h)=M_2(h)=M_3(h)\equiv 0$ has three irreducible components as follows
\begin{itemize}
  \item[($C_1$).] $c_3-b_1=c_5-2b_2=0$;
  \item[($C_2$).] $c_0=c_2=c_3=b_1=0$;
  \item[($C_3$).] $b_1-c_3=c_0=c_2=b_0=c_5+b_2=0$.
\end{itemize}
The foliation \eqref{pullback_eightloop} is Hamiltonian on the component ($C_1$), and  foliation \eqref{pullback_eightloop} can be pulled back to a symmetric foliation on the components ($C_2$) and ($C_3$).
\end{theorem}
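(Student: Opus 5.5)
The plan is to compute $M_1,M_2,M_3$ explicitly with the adapted Francoise algorithm of Theorem \ref{thm-Fran3}, working directly in the form \eqref{pullback_eightloop_polar}. Here $(s_1,s_2)=(1,2)$ and $H=r^4$. After rescaling $\bar H(\theta)=\frac14$, so no extra factor $H(\theta)$ appears. Hence $\tilde\omega_1=F\,dr+G\,d\theta$, and $\tilde\omega_i=0$ for $i\ge2$. The recursion then simplifies: $\Omega_{k+1}=a_k\tilde\omega_1$ and $M_{k+1}(h)=-\oint_{r=h}a_k\,G\,d\theta$.

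\textbf{Step 1: $M_1$.} Compute $M_1(h)=-\int_0^\tau G(h,\theta)\,d\theta$ term by term. This uses the parity rules of Proposition \ref{tri-pro} together with Lemma \ref{lem-sectionAH-1}: for $s_1$ odd and $s_2$ even, the integrals $\int Cs^pSn^q$ vanish unless $q$ is even. The surviving coefficients are
\[
M_1(h)=-\big(h^0(b_1I_{4,0}-c_3I_{0,2})+h^{-2}\cdot 0+h(\dots)\big).
\]
Using the relation $Cs^4+2Sn^2=1$ and integration by parts (for instance $\int Cs^4=\tfrac{2}{3}\tau$-type identities), I expect $M_1$ to reduce to two monomials, namely a constant multiple of $(b_1-c_3)$ and $h^2(c_5-2b_2)$ up to sign. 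I expect these to have the same sign pattern, or to be combined so that the resulting function has no positive zero. I will check the integrals $\int Cs^4$, $\int Sn^2$, $\int Cs^2Sn^2$ and $\int Cs^6$ by differentiating $Cs^pSn^q$ along \eqref{eq-qtri}. The conclusion is then that $M_1\not\equiv0$ has no positive zero, and that $M_1\equiv0$ is equivalent to two linear conditions.

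\textbf{Step 2: $M_2$ and $M_3$.} Under $M_1\equiv0$, use Proposition \ref{pro-cal} (equivalently Theorem \ref{thm-Fran3}(ii.b)) to compute
\[
a_1=-\sum_j r^j\Big(g_{1,j}+(j+1)\int_{\theta_1}^\theta f_{1,j}\Big).
\]
This gives $a_1$ explicitly as a polynomial in $Cs,Sn$ together with primitives, with $\theta_1=0$ allowed by Proposition \ref{pro-no}. Then $M_2(h)=-\oint a_1G$. It becomes a Laurent polynomial in $h$ whose coefficients are quadratic in the parameters. By Lemma \ref{lem-sectionAH-1}, only exponents of the allowed parity survive, and I expect exactly two monomials, so Descartes' rule gives at most one positive zero. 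Exhibiting opposite signs of the two coefficients for some parameters shows that one zero is attained. Repeat the same procedure for $M_3$ on the variety $\{M_1=M_2=0\}$. Decompose that variety first and treat each branch separately.

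\textbf{Step 3: vanishing for $k\ge4$.} Decompose $\{M_1=M_2=M_3=0\}$, for example via a Gröbner basis or by hand, into $C_1,C_2,C_3$. On each component, show integrability directly:
\begin{itemize}
\item On $C_1$, check that $d\omega_1=0$ (here $c_3=b_1$ and $c_5=2b_2$), so $\omega_1$ is exact and $\omega_\epsilon$ is Hamiltonian.
\item On $C_2$ and $C_3$, exhibit a reversibility: the remaining terms make $\omega_\epsilon$ invariant under $(x,y)\mapsto(-x,y)$ or $(x,-y)$ combined with time reversal, i.e. a pullback of a symmetric foliation.
\end{itemize}
In either case the perturbed foliation keeps a center with annulus $\mathbb{R}^2\setminus U_{(0,0)}$, so all $M_k$ vanish. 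Combining the steps, the cyclicity equals one.

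The main obstacle will be Step 2 for $M_3$: handling the iterated integrals of Lyapunov functions and performing the algebraic decomposition. I will try first to reduce every iterated integral to the finitely many moments $\int_0^\tau Cs^pSn^q\,d\theta$ via exactness relations.
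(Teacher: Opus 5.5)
Your overall route is the paper's: compute $M_1,M_2,M_3$ with Theorem \ref{thm-Fran3}, decompose the zero set, and prove integrability on each component. However, two of your steps would fail as written.

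\textbf{Step 1.} The shape you predict for $M_1$ is wrong, and the argument built on it cannot work.

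\emph{The predicted terms cannot occur.} $G(r,\theta)$ contains only the powers $r^{-2},\dots,r^{1}$, so no $h^2$ term is possible. Your parity rule also uses only $\theta\mapsto\tau-\theta$, which kills odd $q$. Proposition \ref{tri-pro}(ii), $\theta\mapsto\theta+\tau/2$, additionally kills every $Cs^pSn^q$ with $p+q$ odd. Hence the $h^{-2},h^{-1},h^{1}$ coefficients all vanish. In particular the $h^{1}$ coefficient, the only place where $c_5-2b_2$ enters at first order, equals $\frac14(c_5-2b_2)\int_0^\tau Cs\,d\theta=0$. Lemma \ref{lem-sectionAH-1}, which you cite, already forces this, since only even powers of $h$ survive.

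\emph{The actual value.} Using $(Cs\,Sn)'=Cs^4-Sn^2$ and $Cs^4+2Sn^2=1$ you get $\int_0^\tau Cs^4=\int_0^\tau Sn^2=\tau/3$. So $M_1(h)=\frac{\tau}{3}(c_3-b_1)$ is a constant, and this is the real reason for the first bullet.

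\emph{Why your fallback fails.} A two-term $M_1$ with independent coefficients $b_1-c_3$ and $c_5-2b_2$ would have a positive zero for suitable signs, contradicting the statement.

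\emph{Why the case analysis would collapse.} $M_1\equiv0$ is the single condition $c_3=b_1$. If it also imposed $c_5=2b_2$, you would already be on the Hamiltonian component $C_1$, $M_2$ and $M_3$ would vanish identically, and the rest of your case analysis would be empty.

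\emph{A caution for Step 2.} Parity leaves three candidate exponents $h^{-4},h^{-2},h^{0}$ for $M_2$, not two. The $h^{-4}$ contributions, which are multiples of $c_0b_0$, cancel only after explicit computation, leaving $M_2=-(c_5-2b_2)\bigl(c_0h^{-2}\int_0^\tau Cs^2\,d\theta+\frac{\tau}{9}c_2\bigr)$.

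\textbf{Step 3 on $C_3$.} Neither reflection you propose preserves the foliation there. On $C_3$ ($b_1=c_3$, $c_0=c_2=b_0=0$, $c_5=-b_2$) the form $\omega_1$ contains $c_3(y\,dx+x\,dy)=c_3\,d(xy)$. This term is odd under both $(x,y)\mapsto(-x,y)$ and $(x,y)\mapsto(x,-y)$, while $dH$ is even. Since $d(xy)\wedge dH=(y^2-x^4)\,dx\wedge dy\neq0$, invariance fails whenever $c_3\neq0$, and $c_3$ is free on $C_3$.

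The missing idea is the nonlinear change of coordinate $z=\epsilon b_1x+\epsilon b_2x^2+(1+\epsilon b_3)y$, i.e.\ taking the $dy$-coefficient of $\omega_\epsilon$ as the new coordinate. After multiplying by $1+\epsilon b_3$, the equation becomes $(f(x)-3\epsilon b_2xz)\,dx+z\,dz=0$ with $f$ an odd cubic. This form is invariant under $(x,z)\mapsto(-x,z)$ and therefore has closed orbits; this is the ``pull back to a symmetric foliation'' in the statement.

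The other two components are fine. On $C_2$ your reflection $(x,y)\mapsto(-x,y)$ works. On $C_1$ your check $d\omega_1=\bigl((b_1-c_3)+(2b_2-c_5)x\bigr)dx\wedge dy=0$ is exactly the paper's argument.
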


\begin{proof}
This result is an application of the algorithm in Theorem \ref{thm-Fran3}. We  show the necessary process of the calculation since the integration of generalized trigonometric functions is pretty tedious.

By directly computation,  we have
$$
M_1(h)=-\int_0^\tau G(h,\theta)d\theta=\frac{1}{3}(c_3-b_1)\tau.
$$
Thus, $M_1(h)$ has no zeros on $(0,\ +\infty)$.

When $M_1(h)\equiv 0$, i.e. $c_3-b_1=0$, by Theorem \ref{thm-Fran3} we can calculate
$$
\begin{aligned}
a_1(r,\theta)=&-r^{-3}\cdot c_0 Cs\theta - r^{-2} \cdot (\frac{3}{2}c_1 Cs^2\theta+3b_0 Sn\theta) - r^{-1} (c_2 Cs^3\theta + 3 c_3 Cs\theta Sn\theta) \\
&- (\frac{3}{4}c_4 + \frac{1}{4} b_3) Cs^4 \theta - (\frac{3}{4} c_5 +\frac{3}{2} b_2) Cs^2\theta Sn\theta -2 b_3 Sn^2\theta - (\frac{1}{4}a_5 -\frac{1}{2} b_2) \int_0 ^\theta Cs t dt\\
\end{aligned}
$$
by the equation $d(F(r,\theta)dr+G(r,\theta)d\theta)=-da_1\wedge dr$. We obtain that
$$
M_2(h)=-\int_0^\tau a_1(h,\theta)G(h,\theta)d\theta=-\frac{1}{9h^2}(c_5-2b_2)(9c_0\int_0^\tau Cs^2 \theta d\theta+c_2\tau h^2),
$$
which has at most one positive zero, and this bound can be reached.

The zero locus of $M_1(h)=M_2(h)=0$ has two irreducible components:
\begin{itemize}
  \item[($C_1$)] $c_3-b_1=0,c_5-2b_2=0$;
  \item[($C_2$)] $c_3-b_1=0, c_0=c_2=0$.
\end{itemize}
The foliation lies in the first component is Hamiltonian since if $c_3-b_1=0,c_5-2b_2=0$, the 1-form
$$
\omega_\epsilon=d(\frac{1}{4}x^4+\frac{1}{2}y^2+\epsilon(c_0x+\frac{1}{2}c_1+\frac{1}{3}c_2 x^3 + b_1 xy + b_2 x^2y + \frac{1}{2}b_3 y^2))=0.
$$
It  is exact which implies $M_i(h)\equiv 0$ for any $i\geq 1$; When the foliation \eqref{pullback_eightloop_polar} lies in the second component, then by using the similar arguments, we have
$$
\begin{aligned}
a_2(r,\theta) = &\frac{1}{480r^4}\Bigl(720c_1^2Cs\theta^4 + 2880c_1b_0Cs\theta^2Sn\theta + 2880b_0^2Sn\theta^2 \\
&\quad + 540Cs\theta^2(c_1^2Cs\theta^2-2b_0^2Cs\theta^2+4c_1b_0Sn\theta)\Bigr) \\
+ &\frac{1}{480r^3}\Bigl(3600c_1c_3Cs\theta^3Sn\theta + 7200c_3b_0Cs\theta Sn\theta^2 \\
&\quad + 96(15c_1c_3Cs\theta^3Sn\theta + 6c_3b_0Cs\theta(2-2Cs\theta^4+Sn\theta^2))\Bigr) \\
+ &\frac{1}{480r}\Bigl(1440c_3c_4Cs\theta^5Sn\theta + 1440c_3Cs\theta Sn\theta^2(c_5Cs\theta^2+2b_2Cs\theta^2+2b_3Sn\theta) \\
&\quad + 360c_3Cs\theta Sn\theta(3c_4Cs\theta^4+3c_5Cs\theta^2Sn\theta+6b_2Cs\theta^2Sn\theta \\
&\quad + b_3(Cs\theta^4+8Sn\theta^2)+c_5\textstyle\int_0^\theta Cst dt-2b_2\textstyle\int_0^\theta Cst dt)\Bigr) \\
+ &\frac{1}{480r^2}\Bigl(720c_1c_4Cs\theta^6 + 1440c_4b_0Cs\theta^4Sn\theta + 4320c_3^2Cs\theta^2Sn\theta^2 \\
&\quad + 720c_1Cs\theta^2Sn\theta(c_5Cs\theta^2+2b_2Cs\theta^2+2b_3Sn\theta) \\
&\quad + 1440b_0Sn\theta^2(c_5Cs\theta^2+2b_2Cs\theta^2+2b_3Sn\theta) \\
&\quad + 120c_1Cs\theta^2(3c_4Cs\theta^4+3c_5Cs\theta^2Sn\theta+6b_2Cs\theta^2Sn\theta \\
&\quad + b_3(Cs\theta^4+8Sn\theta^2)+c_5\textstyle\int_0^\theta Cst dt-2b_2\textstyle\int_0^\theta Cst dt) \\
&\quad + 240b_0Sn\theta(3c_4Cs\theta^4+3c_5Cs\theta^2Sn\theta+6b_2Cs\theta^2Sn\theta \\
&\quad + b_3(Cs\theta^4+8Sn\theta^2)+c_5\textstyle\int_0^\theta Cst dt-2b_2\textstyle\int_0^\theta Cst dt)\Bigr) \\
+ &\frac{1}{480}\Bigl(120c_4Cs\theta^4(3c_4Cs\theta^4+3c_5Cs\theta^2Sn\theta+6b_2Cs\theta^2Sn\theta \\
&\quad + b_3(Cs\theta^4+8Sn\theta^2)+c_5\textstyle\int_0^\theta Cst dt-2b_2\textstyle\int_0^\theta Cst dt) \\
&\quad + 120Sn\theta(c_5Cs\theta^2+2b_2Cs\theta^2+2b_3Sn\theta) \\
&\quad \times (3c_4Cs\theta^4+3c_5Cs\theta^2Sn\theta+6b_2Cs\theta^2Sn\theta \\
&\quad + b_3(Cs\theta^4+8Sn\theta^2)+c_5\textstyle\int_0^\theta Cst dt-2b_2\textstyle\int_0^\theta Cst dt)\Bigr)
\end{aligned}
$$
and
$$
M_3(h)=-\int_0^\tau a_2(h,\theta)G(h,\theta)d\theta=\frac{1}{9h^2}c_3(c_5-2b_2)(9b_0\int_0^\tau Cs^2\theta d\theta +\tau (c_5+b_2)h^2),
$$
which has at most one positive zero, and this bound can be reached.

The zero locus of $M_1(h)=M_2(h)=M_3(h)\equiv 0$ has three irreducible components:
\begin{itemize}
  \item[($C_1$)] $c_3-b_1=c_5-2b_2=0$;
  \item[($C_2$)] $c_0=c_2=c_3=b_1=0$;
  \item[($C_3$)] $b_1-c_3=c_0=c_2=b_0=c_5+b_2=0$.
\end{itemize}
We have shown that the first one is Hamiltonian. The foliation lies on the second component is actually symmetric with respect to $x=0$. In other words, if $c_0=c_2=c_3=b_1=0$, the foliation determined by \eqref{pullback_eightloop}
$$
\omega_\epsilon=d(\frac{1}{4}x^4+\frac{1}{2}y^2)+\epsilon(\frac{1}{2}(c_1+c_4x^2+c_5y)d(x^2)+(b_0+b_2x^2+b_3y)dy)=0
$$
is invariant under the map $(x,y)\mapsto (-x,y)$, which implies the foliation determined by \eqref{pullback_eightloop} has a family of closed orbits.

The foliation lies on the third component is determined by the equation
$$
\omega_\epsilon=((1+\epsilon c_4) x^3 + \epsilon c_1 x + \epsilon y ( b_1-b_2 x)) dx+(\epsilon b_1 x + \epsilon b_2 x^2 + (1 + \epsilon b_3) y) dy = 0.
$$
Let $z = \epsilon b_1 x + \epsilon b_2 x^2 + (1 + \epsilon b_3) y$, then the equation $\omega_\epsilon=0$ can be transformed into
$$
(f(x)-3\epsilon b_2xz)dx+zdz=0,
$$
where $f(x)=(((1+\epsilon c_4)(1+\epsilon b_3)+\epsilon^2 b_2^2)x^3+\epsilon(c_1(1+\epsilon b_3)-\epsilon b_1^2)x$. This equation is also invariant under the map $(x,z)\mapsto (-x,z)$, which implies the foliation determined by above equation also has a family of closed orbits. To sum up, we know the foliation determined by \eqref{pullback_eightloop} always has a period annulus if $M_1(h)=M_2(h)=M_3(h)\equiv 0$, which implies the Melnikov function of the $k$th order  vanishes for every $k\geq 4$.

\end{proof}

\section*{Acknowledgements}

The work is partially supported by National Key R $\&$ D Program of China (No. 2022YFA1005900).
H. He is also partially supported by the National Natural
Science Foundations of China  (No. 12401212) and the Fundamental Research Funds for the Central Universities (Project No. 2025CDJ-IAISYB-013). D. Xiao is also partially supported by the National Natural
Science Foundations of China (No. 12271353).


\begin{thebibliography}{99}

\bibitem{arn80} Arnol'd I. V..
 {\it Ordinary differential equations. {Transl}. from the 3rd
  {Russian} ed. by {Roger} {Cooke}}.
 Springer-Textb. Berlin etc.: Springer-Verlag, 1992.


\bibitem{BZ}{ Berezin I.S., Zhidkov N.P.}. {\it Computing Methods, vol. II.} Pergamon Press, Oxford (1964).

\bibitem{BGL}{ Buica A., Gin\'e J. , Llibre J.}.
{\it Bifurcation of limit cycles from a polynomial degenerate center.}  Adv. Nonlinear Stud. 10 (2010): 597 - 609.

\bibitem{fors37}
 Forster H..
{\it {\"U}ber das {Verhalten} der {Integralkurven} einer gew{\"o}hnlichen
  {Differentialgleichung} erster {Ordnung} in der {Umgebung} eines
  singul{\"a}ren {Punktes}.}
 Math. Z., 43 (1937): 271 - 320.

\bibitem{F}{Fran\c{c}oise J.-P.}. {\it Successive derivatives of a first return map, application to the study of quadratic vector fields}. Ergodic Theory Dynam. Syst., 16(1996).

\bibitem{FHX} {Francoise J.-P., He H. , Xiao D.}.  {\it The number of limit cycles bifurcating from the period annulus of quasi-homogeneous Hamiltonian systems at any order.} Journal of Differential Equations, 276 (2021): 1 - 24.

 \bibitem{FP}{  Fran\c{c}oise J.-P.,  Pelletier M.}.
{\it Iterated integrals, Gelfand-Leray residue, and first return mapping.} J. Dyn. Control Syst. 12 (2006): 357 - 369.

\bibitem{FGX}{ Fran\c{c}oise J.-P., Gavrilov  L.,  Xiao D.}.
{\it Hilbert's 16th problem on a period annulus and Nash space of arcs}. Math. Proc. Cam. Phil. Soc. 169 (2020): 377 - 409.






\bibitem{gavr08}
Gavrilov L..
{\it Cyclicity of period annuli and principalization of {B}autin ideals.}
 Ergodic Theory Dynam. Systems, 28 (2008):1497 - 1507.

\bibitem{gavr13}
 Gavrilov L..
 {\it On the number of limit cycles which appear by perturbation of
  two-saddle cycles of planar vector fields}.
Funct. Anal. Appl., 47 (2013):174 - 186.

\bibitem{gano08}
Gavrilov L., Novikov D. .
{\it On the finite cyclicity of open period annuli}.
Duke Math. J. , 152 (2010):1 - 26.


\bibitem{G}{Gavrilov L.}.
{\it Higher order Poincare-Pontryagin functions and iterated path integrals}.  Ann.fac.sci.toulouse Math XIV(2005):663 - 682.






\bibitem{Iliev}{Iliev I.}.
{\it The number of limit cycles due to polynomial perturbations of the harmonic oscillator.} Math. Proc. Cambridge Philos. Soc., 127 (1999): 317 - 322.

\bibitem{Iliev2}{Iliev I. D., Li C., Yu J.}.
{\it On the cubic perturbations of the symmetric 8-loop Hamiltonian.} Journal of Differential Equations 269(2020): 3387 - 3413.

\bibitem{LianLiu}{Lian H., Liu C,, Yang J.}.
{\it On the cyclicity of quasi-homogeneous polynomial systems},  Journal of Mathematical Analysis and Applications 516(2022): 126510.

\bibitem{Ilyashenko} {Ilyashenko S. Yu. ,} {\it The origin of limit cycles under perturbation of the equation
dw/dz = -Rz/Rw, where R(z, w) is a polynomial}. Math. USSR Sbornik, 78 (1969): 360šC373.

\bibitem{IY} Ilyashenko S. Yu., Yakovenko S., {\it Finite cyclicity of elementary polycycles in generic families,}
in: Concerning the Hilbert 16th Problem, Amer. Math. Soc. Transl., vol. 165, Amer. Math.
Soc., Providence, RI (1995): 21–95.

\bibitem{L}{Lyapunov A.M.}.
{\it Stability of motion.} In: Mathematics in Science and Engineering,
vol. 30. Academic Press, San Diego, 1966.

\bibitem{JMP}{Jebrane  A., Mardesic P.,  Pelletier M.}.
 {\it A generalization of Fran\c{c}oise's algorithm for calculating higher order Melnikov functions}.
 Bull. Sci. math., 126(2002): 705 - 732.

\bibitem{JZ}{ Jebrane A. and  Zoladek H.}.
{\it A note on higher order Melnikov functions.} Qualitative Theory of Dynamical Systems 6.2(2005):273 - 287.

\bibitem{Kal} Kaloshin V.. {\it The existential Hilbert 16-th problem and an estimate for cyclicity of elementary
polycycles,} Invent. Math., 151 (2003): 451–512.

\bibitem{LLNZ} {Li W., Llibre J., Nicolau M., Zhang X.}. {\it On the differentiability of first integrals of two dimensional flows.} Proc. Am. Math. Soc., 130 (2002): 2079 - 2088.

\bibitem{Li} { Li W.,  Llibre J., Yang J., Zhang Z.}.
{\it Limit cycles bifurcating from the period annulus of quasi-homogeneous centers}. J.
Dynamics and Differential Equations, 21 (2009): 133 - 152.

\bibitem{nest60}
 Nemytskii V. V. and Stepanov V.~V..
\newblock {\em Qualitative theory of differential equations}, volume~22 of {\em
  Princeton Math. Ser.}
\newblock Princeton University Press, Princeton, NJ, 1960.


\bibitem{Rou1} {Roussarie R.}. {\it Bifurcation of planar vector fields and Hilbert’s sixteenth problem}. vol. 164 of Progress in Mathematics. Birkhäuser Verlag, Basel, 1998.

\bibitem{Rou2} {Roussarie R.}. {\it Melnikov Functions and Bautin Ideal}. Qualitative Theory of Dynamical
 Systems, 2(2001): 67 - 78.

\bibitem{TangZhang} Tang Y., Zhang X.. {\it Global dynamics of planar quasi-homogeneous differential systems.} Nonlinear Analysis: Real World Applications, 49 (2019): 90 - 110.


\bibitem{ZhangYu}{ Zhang L., Yu J.}. {\it On the center criterion of planar quasi-homogeneous polynomial differential systems.} Bulletin des Sciences Math\'ematiques, 147 (2018): 7 - 25.

 \bibitem{ZhangLi}{Zhang Z., Li B. }.
{\it High order Melnikov functions and the problem of uniformity in global bifurcation.}
 Annali di Matematica pura ed applicata,  CLXI (1992): 181 - 212.



\end{thebibliography}
\end{document}